\documentclass[11pt]{article}
\usepackage{amssymb,latexsym}
\usepackage{amsmath}
\usepackage{amsthm}
\usepackage{authblk}
\usepackage{graphicx}
\usepackage{xcolor}
\makeatletter

\newcommand{\Rmnum}[1]
{\expandafter@slowromancap\romannumeral#1@}
\makeatother

\numberwithin{equation}{section}
\newtheorem{theorem}{Theorem}[section]
\newtheorem{proposition}[theorem]{Proposition}
\newtheorem{lemma}[theorem]{Lemma}

\usepackage{multirow}

\newcommand{\R}{{\mathbb{R}}}
\theoremstyle{definition}

\newtheorem{remark}{Remark}[section]

\title{}
\author{}

\begin{document}
\title{Complete Classification and Nondegeneracy of $N$-Component Cubic Nonlinear Schr\"{o}dinger System  in $\R$}
\author{Yujin Guo\thanks{School of Mathematics and Statistics,  Key Laboratory of Nonlinear Analysis $\&$ Applications (Ministry of Education), Central China Normal University, Wuhan 430079, P. R. China. Y. J. Guo is partially supported by  NSF of China
(Grants 12225106 and 12371113) and National Key R $\&$ D Program of China (Grant 2023YFA1010001). Email: yguo@ccnu.edu.cn. }, \  Yong Luo\thanks{School of Mathematics and Statistics, and Hubei Key Laboratory of
Mathematical Sciences,  Central China Normal University, Wuhan 430079, P. R. China. Email: yluo@ccnu.edu.cn.},\ \,  and\; Juncheng Wei\thanks{Department of Mathematics, Chinese University of Hong Kong, Shatin, NT, Hong Kong. J. Wei is partially supported  by   Hong Kong General Research Funds ``New frontiers in singular limits of nonlinear partial differential equations" and "On critical and supercritical Fujita equations". Email: wei@math.cuhk.edu.hk.}  }

\date{}

\smallbreak \maketitle

\begin{abstract}
We study the one-dimensional cubic nonlinear Schr\"{o}dinger system
\[
u_i''+2\left(\sum_{k=1}^N u_k^2\right)u_i=-\mu_i u_i
\quad \mbox{in } \ \mathbb R,\ \ i=1,2,\cdots,N,
\]
where  $u=(u_1,\cdots,u_N)\in \big(H^1(\mathbb{R})\big)^N$, $\mu_1\leq\mu_2\leq\cdots\leq\mu_N<0$, and $N\geq 2$ is arbitrary. In this paper, we prove the following results for any $N\ge 2$: (i). All nontrivial solutions of the system can be completely classified; (ii). The linearized operator at any nontrivial solution of the system is non-degenerate; (iii).  For all $i=1, 2,\cdots, N$, the exact $L^2$-mass identity of $u_i$ is derived in terms of $2\sqrt {|\mu_i|}$, which yields a complete characterization of normalized solutions satisfying $\int_{\mathbb{R}}u_i^2dx=1$. These settle some conjectures of [R. Frank, D. Gontier and M. Lewin, CMP, 2021] and [Y. Guo, Y. Luo and J. Wei, APDE, 2026], where the system was addressed specially for $N=2$ and $N=3$, respectively.
\end{abstract}

\section{Introduction}
We consider the following $N$-component system
\begin{equation}\label{FM}
-u''_i-2(\sum_{j=1}^N u^2_j)u_i=\mu_iu_i\ \,\ \hbox{in }\ \R,\ \ i=1,2,\cdots,N,
\end{equation}
where \(N\ge2\), \(\mu_1\le \mu_2\le\cdots\le \mu_N<0\), and
\(u_i\in H^1(\mathbb R)\). The systems of coupled nonlinear Schr\"odinger equations arise in several
physical and mathematical contexts, including multi-component
Bose--Einstein condensates, nonlinear optics, and effective models of
many-particle quantum systems, see e.g.
\cite{LS,M,RL,ZS}. In such models, different component
interacts with each other through a common nonlinear density, which often
create the complicated structures absent in a single scalar equation. In the
elliptic PDE theory, much attention has been devoted to the existence, uniqueness
multiplicity and other qualitative properties  of standing waves and
normalized solutions for these systems, see
\cite{A,BS,BZZ,CS,LW,WZZ,WW} and the references therein.  These works also provide
generally  variational backgrounds of the coupled systems.  Instead of studying  a
particular state, in this paper we aim at a complete classification on all finite-energy
solutions of the one-dimensional system \eqref{FM}.

A more specific motivation of studying \eqref{FM} comes from the nonlinear
Schr\"odinger equations of orthonormal functions, which are naturally related
to the effective descriptions of fermionic systems and Lieb--Thirring
inequalities (cf. \cite{Lewin3,Lewin,GLN}).  See also \cite{HLT,LT} for the classical
Lieb--Thirring (L-T) inequality.  The finite-rank version of L-T inequality
is concerned with the estimates of the form
\begin{equation}\label{1:FM}
\sum_{j=1}^N |\lambda_j(-\Delta-V)|^\gamma
\leq L_{\gamma,d}^{(N)}
\int_{\mathbb R^d} V_+^{\gamma+d/2}\,dx ,
\end{equation}
where \(\lambda_j(-\Delta-V)\) denotes the min-max levels of the
Schr\"odinger operator \(-\Delta-V\) in $\R^d$.  The optimizer problem of this
finite-rank inequality was studied in details by Frank-Gontier-Lewin
\cite{Lewin3}. In particular, Frank-Gontier-Lewin showed in \cite{Lewin3} that the optimizers of (\ref{1:FM}) satisfy
an Euler-Lagrange equation, which is a system of coupled nonlinear
Schr\"odinger equations.

For the particular case where \(d=1\) and  the
exponent \(\gamma=3/2\), the optimizer of (\ref{1:FM}) satisfies  the cubic system \eqref{FM}, after a simple normalization
of the eigenfunctions, and the corresponding finite-rank optimizing potential is precisely the
KdV \(N\)-soliton potential, which is reflectionless (cf. \cite{Lewin3}).
Thus, a classification on the solutions for \eqref{FM} gives an
eigenfunction-level description of the bound-state profiles associated with
the soliton potential. This solitonic point of view is already visible in the low-dimensional
classifications. More precisely, when \(N=2\), the classification  on the solutions of \eqref{FM} can be
expressed (cf. \cite{Lewin}) through explicit formulas. When \(N=3\), Guo-Luo-Wei
\cite{GLW2026} recently obtained the complete classification and nondegeneracy of solutions for \eqref{FM}.
We remark that the above mentioned classifications in \cite{Lewin,GLW2026} make full use of  Hirota-type bilinear
methods, together with algebraic  constants of
motion.  These arguments also lead to some conjectures in \cite{GLW2026} on the structure of
solutions for \eqref{FM} in the general case \(N>3\).
However, it should be emphasized that it is nontrivial to extend the cases  \(N=2,3\) to the  general case \(N\ge 2\). Actually, one can note that the proofs for the cases \(N=2,3\) heavily rely on the identities whose explicit
forms become  increasingly complicated as the number $N$ increases.
In particular, a direct use of the conserved quantities and a componentwise
Hirota expansion result in the more involved analysis of algebraic structures for \eqref{FM}.
Thus, even though the formulas suggested by the cases \(N=2,3\) exhibit a clear determinant pattern, a uniform verification and a complete classification for the general case \(N\geq 2\) require a more systematic approach.

The main purpose of the present paper is to provide such a uniform approach to the complete classification and nondegeneracy of solutions for \eqref{FM}.  We shall prove that
all \(H^1(\mathbb R)^N\) solutions of \eqref{FM} can be completely
classified for arbitrary \(N\ge2\).  In particular, when the spectral parameters  $\mu_1<\mu_2<\cdots<\mu_N<0$ are
pairwise distinct, each solution with nontrivial components is shown to be
given by an explicit determinant formula.  When some spectral parameters of $\mu_1\le \mu_2\le \cdots\le\mu_N<0$
coincide with each other, it however does not produce genuinely new scalar profiles, in the sense that the system just reduces to the distinct-parameter case by grouping equal spectral values, and the
only additional freedom is the natural rotation inside each spectral block.
On the other hand, we shall further prove the nondegeneracy of the classified solutions for \eqref{FM}, and their exact
\(L^2\)-mass identities are also derived in terms of $\mu_1\le \mu_2\le \cdots\le\mu_N<0$.  In particular, our results settle some conjectures of \cite{Lewin,GLW2026} on the complete classification and nondegeneracy  of solutions for \eqref{FM} in the general case \(N\ge 2\).

\subsection{Main results}

In this subsection, we shall introduce the main results of the present paper. We begin with the case of pairwise distinct spectral parameters, where the
full determinant structure is already visible.  Since identically zero
components can  be essentially removed, it is enough to state the following theorem for
solutions whose components are all nontrivial.

\begin{theorem}\label{thm1.1}
For $\mu_1<\mu_2<\cdots<\mu_N<0$, suppose $u=(u_1,u_2,\cdots,u_N)$ is a  solution of \eqref{FM} in $(H^1(\R)\setminus\{0\})^N$. Then there exists a vector $a=(a_1,a_2,\cdots,a_N)\in(\R\setminus\{0\})^N$ such that $u$ can be expressed exactly as
\begin{equation}\label{exp1}
	u_i(x)=\frac{a_ie^{\eta_ix}\det(I+B^{(i)}M(x))}{\det(I+M(x))},\ \ i=1,2,\cdots,N,
\end{equation}
where
\begin{equation}\label{MatrixM}
M(x):=\bigl(m_{jk}(x)\bigr)_{1\le j,k\le N},
\ \
m_{jk}(x):=\frac{a_j a_k}{2\sqrt{\eta_j\eta_k}(\eta_j+\eta_k)}e^{(\eta_j+\eta_k)x},
\end{equation}
and for each $i=1,2,\cdots,N$,  $\eta_i=\sqrt{-\mu_i}>0$, and
\begin{equation}\label{Bi}
B^{(i)}:=\operatorname{diag}\Big(
\frac{\eta_i-\eta_1}{\eta_i+\eta_1},
\frac{\eta_i-\eta_2}{\eta_i+\eta_2},
\dots,
\frac{\eta_i-\eta_N}{\eta_i+\eta_N}
\Big).
\end{equation}
\end{theorem}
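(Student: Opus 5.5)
Set $V(x):=2\sum_{k=1}^N u_k^2(x)$, so that each $u_i$ is an $H^1$ eigenfunction of the scalar Schr\"odinger operator $H=-\partial_x^2-V$ with eigenvalue $\mu_i$. The plan has four steps: show that $V$ is a reflectionless potential with exactly the $N$ bound states $\mu_1<\cdots<\mu_N$; identify $V$ as a KdV $N$-soliton through the Gelfand--Levitan--Marchenko (GLM) equation; recover the $u_i$ from the Jost solutions; and finally evaluate the resulting Cauchy-type determinants to obtain \eqref{exp1}.

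\textbf{Step 1: the potential is reflectionless.} Each $u_i$ is a nontrivial eigenfunction of $H$, and in one dimension eigenvalues are simple. Hence $u_i$ decays like $e^{-\eta_i|x|}$ and $V$ decays exponentially, so $V$ lies in the Faddeev class and scattering theory applies. The key structural input is that $V=2\sum_k u_k^2$ is built from the squared eigenfunctions themselves. I would derive the trace-type and conserved quantities of the system, i.e. the first integrals that are available for every $N$. For instance, $E:=\sum_i (u_i'^2+\mu_i u_i^2)+(\sum_i u_i^2)^2$ is constant, and equals $0$ by decay. Using these, I would show that $V$ has no further bound states besides the $\mu_i$ and that its reflection coefficient vanishes.

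The approach I would try first is to compare $V$ with the reflectionless potential $V_0:=-2\partial_x^2\log\det(I+M)$ that has the same eigenvalues and norming constants. One then applies the Deift--Trubowitz formula, $V_0=2\sum_k \psi_k^2$ with $\psi_k$ the normalized eigenfunctions of $V_0$, together with the fact that this representation characterizes reflectionless potentials. Concretely, consider the functional $\int|R(k)|^2$-type terms in the trace formula (the Faddeev--Zakharov trace identities). For a general potential, $\int V\,dx=4\sum_j\eta_j+\frac{2}{\pi}\int\log(1-|R|^2)\,dk$, while $\int V=2\sum\|u_i\|^2$. Combining this with the identities for $\int V^2$ and the normalization constraints forced by the equation, one would show that the nonnegative reflection contribution vanishes. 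This is where I expect the main difficulty: one must show that $R\equiv0$ and that no extra eigenvalues exist, rather than merely getting inequalities. A fallback is a Lax-pair / Darboux-dressing argument: perform a Crum transform removing $\mu_1$, check that the transformed tuple satisfies the $(N-1)$-component system, and induct on $N$ with the scalar $N=1$ sech soliton as the base case.

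\textbf{Step 2: $V$ is an $N$-soliton.} Once $V$ is known to be reflectionless with eigenvalues $-\eta_j^2$, the GLM kernel is degenerate: $F(x)=\sum_j c_j^2e^{-\eta_j x}$. Solving the GLM equation gives $V=2\partial_x^2\log\det(I+M)$. Here I take the left-normalized Jost data, i.e. the $x\to-\infty$ asymptotics $u_j\sim a_je^{\eta_jx}$. The $a_j\neq0$ are defined by these asymptotics, and the normalization constants in $m_{jk}$ come from $\|u_j\|_{L^2}$ through the Deift--Trubowitz relation.

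\textbf{Step 3: recovering the $u_i$.} By simplicity of the eigenvalue, $u_i$ is a multiple of the GLM eigenfunction, which equals $e^{\eta_ix}$ times an explicit ratio: by Cramer's rule it is $(I+M)^{-1}$ applied to $(a_je^{\eta_jx})$. Matching the $x\to-\infty$ asymptotics fixes the constant as $a_i$.

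\textbf{Step 4: the determinant form.} To convert this into \eqref{exp1}, I would use the Cauchy-matrix identity. Multiplying row $i$ of $M$ by $\frac{\eta_i-\eta_j}{\eta_i+\eta_j}$ corresponds to the rank-one/bordered determinant $\det\begin{pmatrix}I+M & v\\ w^T&1\end{pmatrix}$, which can be rewritten as $\det(I+B^{(i)}M)$. This follows from the displacement structure $\eta_jm_{jk}+m_{jk}\eta_k=\frac{a_ja_k}{2\sqrt{\eta_j\eta_k}}e^{(\eta_j+\eta_k)x}$, which is rank one. This last step is algebra; it can be checked by expanding both sides in principal minors, since the minors of $M$ are Cauchy determinants.
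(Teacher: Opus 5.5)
\textbf{There is a genuine gap in Step 1.} That step is not a preliminary; it is the whole content of the theorem. Once $V=2\sum_k u_k^2$ is known to be reflectionless with bound states exactly $\mu_1,\dots,\mu_N$, Steps 2--4 are standard. Deift--Trubowitz then does force $\|u_j\|_2^2=2\eta_j$, which is where the factors $\sqrt{2\eta_j}$ in $m_{jk}$ come from. As written, though, Step 1 is only a plan, and the mechanism you sketch cannot close.

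In the first trace formula, extra eigenvalues raise $4\sum\kappa_j$, while the reflection term $\frac{2}{\pi}\int\log(1-|R|^2)\,dk\le 0$ lowers it. So even exact knowledge of $\int V$ only gives $4\sum_{\mathrm{extra}}\kappa_j=-\frac{2}{\pi}\int\log(1-|R|^2)\,dk\ge 0$, which is no contradiction.

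The identity in which both contributions have the same sign is the $\gamma=3/2$ Lieb--Thirring trace identity $\frac{3}{16}\int V^2=\sum_{\mathrm{all}}\kappa_j^3+(\text{nonnegative reflection term})$. Combining $E\equiv 0$ with the Nehari identities gives $\int V^2=4\int S^2=\frac83\sum_i\eta_i^2\|u_i\|_2^2$. So this route works exactly when you know $\|u_i\|_2^2=2\eta_i$.

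That mass identity does not follow from $E$, nor from anything else you list; it needs all $N$ constants of motion. One way to get it:
\begin{itemize}
\item Show that $\sum_j\frac{P_j}{\lambda+\mu_j}+\sum_{j<l}\frac{W_{jl}^2}{(\lambda+\mu_j)(\lambda+\mu_l)}$ is independent of $x$, hence zero, where $P_j=(u_j')^2+(S+\mu_j)u_j^2$ and $W_{jl}=u_j'u_l-u_ju_l'$.
\item Evaluate at $\lambda=-\mu_j$ to obtain
\[
(u_j')^2+(S+\mu_j)u_j^2+\sum_{l\neq j}\frac{W_{jl}^2}{\mu_l-\mu_j}=0 .
\]
\item Integrate $Q_j'=-u_j^2$, where $Q_j=\frac{u_j'}{u_j}+\sum_{l\neq j}\frac{W_{jl}u_l}{(\mu_l-\mu_j)u_j}$. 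Its singularities at zeros of $u_j$ are removable by the identity above, and $Q_j\to\pm\eta_j$ as $x\to\mp\infty$.
\end{itemize}

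Your Crum fallback has the same hole. For $j=1$, the identity above shows that $u_1$ has no zeros: at a zero it reads $u_1'(x_0)^2\bigl(1+\sum_{l\ge 2}\frac{u_l(x_0)^2}{\mu_l-\mu_1}\bigr)=0$. So the transform is regular. The identity also gives $V+2(\log u_1)''=2\sum_{l\ge 2}\frac{\tilde u_l^2}{\mu_l-\mu_1}$ with $\tilde u_l=u_l'-\frac{u_1'}{u_1}u_l$, i.e.\ $\tilde u_l/\sqrt{\mu_l-\mu_1}$ solve the $(N-1)$-component system. Neither fact appears in your proposal. You would also still have to invert the Darboux step, recovering the lost norming constant, to reach the determinant formula.

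For comparison, the paper needs none of the scattering theory. It checks directly, via $q=(I+M)^{-1}v$, that the formula solves the system for every $a$. It then proves that an $H^1$ solution is determined by its leading coefficients $a_i$ at $-\infty$. This uses a contraction estimate on a far-left half-line for the difference of two solutions with the same left data, followed by ODE uniqueness. Your Step 4 is essentially the paper's principal-minor/Cauchy-determinant computation. Pairing it with such a direct verification and uniqueness argument would make your Steps 1--3 unnecessary.
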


%\begin{remark}\label{rem:parameters-thm11}
We remark that the parameters \(a_1,\cdots,a_N\) of Theorem~\ref{thm1.1} are not
auxiliary constants introduced by the determinant ansatz. They are precisely
the leading coefficients of the solution $u$ at \(-\infty\). Indeed, since
\(M(x)\to0\) as \(x\to-\infty\), the formula \eqref{exp1} gives that
\[
u_i(x)=a_i e^{\eta_i x}+o(e^{\eta_i x})
\ \ \mbox{as}\ \  x\to-\infty,\ \ i=1,\cdots,N .
\]
Thus, Theorem~\ref{thm1.1} gives a one-to-one parametrization of all
solutions with prescribed nonzero left asymptotic data. Moreover, translations and independent sign changes of Theorem~\ref{thm1.1} are encoded directly in
the parameters. If \(U^a=(U_1^a, \cdots, U_N^a)\) denotes the solution corresponding to some vector \(a\), then
\[
U^a(x+t)=U^{a(t)}(x),
\ \
a_i(t)=a_i e^{\eta_i t},
\]
and for \(\sigma_i\in\{\pm1\}\),
\[
U^{(\sigma_1a_1,\cdots,\sigma_Na_N)}_i
=
\sigma_i U^a_i .
\]
Consequently, the determinant formula of Theorem~\ref{thm1.1} describes the full solution manifold of \eqref{FM},
including its natural translation and sign symmetries.
%\end{remark}

%\begin{remark}\label{rem:spectral-interpretation}
Denote
\[
V_{pot}^a(x):=2\sum_{j=1}^N (U_j^a(x))^2,
\ \
H_a:=-\frac{d^2}{dx^2}-V_{pot}^a(x)\ \ \,\mbox{in} \, \ L^2(\R).
\]
Then each component \(U_i^a\) of the solution \(U^a=(U_1^a, \cdots, U_N^a)\) for \eqref{FM} is an eigenfunction of \(H_a\) with the
eigenvalue \(\mu_i\):
\[
H_aU_i^a=\mu_iU_i^a\ \,\ \mbox{in} \ \ \R,
\ \  i=1,\cdots,N .
\]
In particular, when
\[
\mu_1<\mu_2<\cdots<\mu_N<0,
\]
the determinant formula shows that \(U_1^a\) has a fixed sign on
\(\mathbb R\). Hence,   \(U_1^a\) is a ground state of \(H_a\), and \(\mu_1\) is the
first eigenvalue of \(H_a\).
Since \(H_a\) is self-adjoint, we note that the eigenfunctions corresponding to
distinct eigenvalues are orthogonal in \(L^2(\mathbb R)\), which implies that
\[
\int_{\mathbb R}U_i^aU_j^a\,dx=0,
\ \  i\neq j .
\]
The exact $L^2-$mass identity of Theorem~\ref{thm L2iden} below further gives that
\[
\|U_i^a\|_{L^2(\mathbb R)}^2=2\eta_i=\sqrt{-\mu_i}>0,
\ \  i=1,\cdots,N .
\]
Thus, we have
\[
\sum_{i=1}^N |\mu_i|^{1/2}
=
\sum_{i=1}^N \eta_i
=
\frac14\int_{\mathbb R}V_{pot}^a(x)\,dx .
\]
This spectral viewpoint explains the relevance of the classification for  \eqref{FM} to
the finite-rank Lieb-Thirring inequality studied in
Frank-Gontier-Lewin~\cite{Lewin}.
%\end{remark}

We now sketch  the proof of Theorem~\ref{thm1.1}.
For any \(N\), the main point of proving  Theorem~\ref{thm1.1} is to verify the determinant ansatz
without expanding it one component by one component.  By a determinant-ratio
identity, the formula of Theorem~\ref{thm1.1} can be rewritten as
\[
U_i^a=\sqrt{2\eta_i}\,q_i,
\ \  q=(q_1,\cdots,q_N)=(I+M)^{-1}v,
\]
where \(v\) is a suitable vector.  The verification is then reduced to the claim
that \(q\) satisfies a closed second-order differential system.  This can
be checked uniformly in \(N\) by exploiting the special Cauchy-type
structure of \(M(x)\) in (\ref{MatrixM}).

To finish the proof of Theorem~\ref{thm1.1}, it then remains to prove that no other \(H^1\)-solutions exist. For an arbitrary
solution \(u\), each component equation is an \(L^1\)-perturbation of
\(y''-\eta_i^2y=0\) near \(-\infty\). The \(H^1\)-condition excludes the
growing mode $e^{-\eta_ix}$ as $x\to-\infty$, and hence it determines a unique vector
\(a=(a_1,\cdots,a_N)\) from the asymptotic behavior
\[
u_i(x)=a_i e^{\eta_i x}+o(e^{\eta_i x})
\ \ \mbox{as}\ \  x\to-\infty .
\]
Comparing \(u\) with the determinant solution \(U^a\) associated with the
same asymptotic data, an asymptotic uniqueness principle then gives
\(u\equiv U^a\) on a left half-line, and the usual uniqueness theorem of
ODE systems then yields \(u\equiv U^a\) on \(\mathbb R\). We refer to Sections 2 and 3 for the detailed proof of   Theorem~\ref{thm1.1}.

 Based on the classification of Theorem~\ref{thm1.1}, we next describe the linearized
structure of solutions for \eqref{FM}.
For any fixed vector
$a=(a_1,\cdots,a_N)\in(\mathbb R\setminus\{0\})^N$, let
\begin{equation}\label{Ua}
	U^a=(U_1^a,\cdots,U_N^a)
\end{equation}
be the determinant solution given in Theorem~\ref{thm1.1}. Direct computations then yield that
\[
U_i^a(x)=a_i e^{\eta_i x}+o(e^{\eta_i x})
\ \  \mbox{as} \ \  x\to-\infty .
\]
The linearized operator at $U^a$  is defined as
\begin{equation}\label{LUa}
	(\mathcal L_{U^a}\phi)_i
	:=
	-\phi_i''
	-\mu_i\phi_i
	-2\left(\sum_{j=1}^N (U_j^a)^2\right)\phi_i
	-4U_i^a\sum_{j=1}^N U_j^a\phi_j,
	\ \  i=1,\cdots,N.
\end{equation}
The natural kernel directions of (\ref{LUa}) are obtained by differentiating
the family with respect to the parameters \(a_1,\cdots,a_N\), and the
following nondegeneracy theorem shows that these are the only \(H^1\)-kernel directions.

\begin{theorem}
\label{prop:nondegeneracy}
For \(\mu_1<\mu_2<\cdots<\mu_N<0\),  let \(U^a\) be the
solution given by Theorem~\ref{thm1.1} for some vector
\(a=(a_1,\cdots,a_N)\in(\mathbb R\setminus\{0\})^N\).
Then we have $\dim\ker_{H^1(\mathbb R)^N}\mathcal L_{U^a}=N$, and
\[
\ker_{H^1(\mathbb R)^N}\mathcal L_{U^a}
=
\operatorname{span}
\left\{
\partial_{a_1}U^a,\cdots,\partial_{a_N}U^a
\right\},
\]
where \(\mathcal L_{U^a}\) is defined by \eqref{LUa}.
\end{theorem}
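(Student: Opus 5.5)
The plan is to prove the two inclusions separately and use ODE asymptotics at $-\infty$ to control the dimension.

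\emph{Lower bound.} The map $a\mapsto U^a$ is smooth from $(\R\setminus\{0\})^N$ into $H^1(\R)^N$. Since the entries of $M(x)$ decay exponentially as $x\to-\infty$, and $\det(I+M)\ge1$ because $M$ is a positive semidefinite Gram-type Cauchy matrix, each $\partial_{a_j}U^a$ lies in $H^1$. Differentiating \eqref{FM} in $a_j$ gives $\mathcal L_{U^a}\partial_{a_j}U^a=0$. For linear independence, I would use the remark after Theorem~\ref{thm1.1}:
\[
(\partial_{a_j}U^a)_i=\delta_{ij}e^{\eta_i x}+o(e^{\eta_i x})\quad\text{as } x\to-\infty.
\]
This requires differentiating the expansion in $a$. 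That is legitimate because the correction terms are analytic in $a$ and of order $e^{(\eta_i+2\eta_{\min})x}$. The leading data of these $N$ functions are therefore the unit vectors, which gives independence. Hence $\dim\ker\ge N$.

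\emph{Upper bound.} Let $\phi\in\ker_{H^1}\mathcal L_{U^a}$. The potential terms $2\sum_j(U^a_j)^2$ and $4U^a_iU^a_j$ are $O(e^{2\eta_1 x})$ as $x\to-\infty$, so they are integrable on left half-lines. Near $-\infty$, each component therefore solves an $L^1$-perturbation of $y''-\eta_i^2y=0$, coupled across components. I would apply the same Levinson-type asymptotic argument used for Theorem~\ref{thm1.1}, now for the linear $2N$-dimensional first-order system.
\begin{itemize}
\item Its solution space has a basis asymptotic to $e^{\pm\eta_i x}e_i$.
\item The $H^1$ condition kills the modes $e^{-\eta_i x}$, so the $L^2$-at-$-\infty$ solution space is exactly $N$-dimensional.
\item Each such solution is parametrized by its leading coefficients $b_i$, with $\phi_i=b_ie^{\eta_ix}+o(e^{\eta_ix})$.
\end{itemize}
A kernel element $\phi$ thus determines a vector $b$. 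Then $\psi:=\phi-\sum_j b_j\partial_{a_j}U^a$ is a kernel element with all leading coefficients zero.

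\emph{Main obstacle.} The key step is to show $\psi\equiv0$. Here I would not rely on a naive uniqueness argument. Instead I would use the linear asymptotic uniqueness principle: the map from the $N$-dimensional decaying subspace to leading data $b$ is injective, because the Volterra integral equation on $(-\infty,x_0]$ built from the decaying fundamental modes has a unique solution for given $b$.

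The delicate point is the coupling between modes with different $\eta_i$. For $\eta_k>\eta_i$, a component could a priori contain $e^{\eta_k x}$ terms that are $o(e^{\eta_i x})$ but are not generated by the data. I would handle this by setting up the integral equation in the weighted space $\sup_{x\le x_0} e^{-\eta_1 x}|\psi(x)|$ and treating the modes in increasing order of $\eta$. The perturbation gains a factor $e^{2\eta_1 x}$, which is enough to absorb the gaps $\eta_k-\eta_i$ once $x_0$ is sufficiently negative and the subdominant modes are peeled off inductively. Zero data then forces $\psi=0$ on $(-\infty,x_0]$, and uniqueness for linear ODEs extends this to all of $\R$. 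Therefore $\phi\in\operatorname{span}\{\partial_{a_j}U^a\}$, which gives $\dim\ker=N$.
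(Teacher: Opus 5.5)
Your lower bound matches the paper's argument. For the upper bound you take a different route. The paper applies its scalar Lemma~\ref{lem:L1-asymptotic} to each component, treating the coupling $h_i=-4U_i^a\sum_jU_j^a\phi_j=O(e^{(\eta_i+\eta_*)x})$ as a source, and then uses the left-uniqueness Lemma~\ref{lem:zero-left-principle}. You instead invoke Levinson's theorem for the $2N$-dimensional first-order system.

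That route is viable, and it closes sooner than you think. Your second bullet already says that the solutions lying in $L^2$ near $-\infty$ form an $N$-dimensional space. Every $H^1$ kernel element maps injectively into that space, by ODE uniqueness, so $\dim\ker\le N$. Together with the $N$ independent directions $\partial_{a_j}U^a$, this gives the theorem. No leading coefficients and no comparison function $\psi$ are needed. Injectivity of $\phi\mapsto b$ also follows directly from the Levinson basis. If $\psi=\sum c_ky_k$ with some $c_k\neq0$, take the active mode with the smallest $\eta_k$. It dominates the $k$-th component, which contradicts $\psi_k=o(e^{\eta_kx})$. (For $H^1$ membership of $\partial_{a_j}U^a$ you also need decay at $+\infty$, where $M$ grows. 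Writing $q=V^{-1}(V^{-2}+C)^{-1}\mathbf 1$ gives $\partial_{a_k}q_i=O(e^{-\eta_ix})$ there.)

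The step you label the main obstacle is unnecessary, and as written it is not correct.
\begin{itemize}
\item With $\mu_1<\cdots<\mu_N$ one has $\eta_1>\cdots>\eta_N$, so the potentials are $O(e^{2\eta_Nx})$, not $O(e^{2\eta_1x})$.
\item In a single uniform weight $e^{-\eta_{\min}x}$, the diagonal term $-2S^a\psi_i$ alone yields a source of size $e^{3\eta_{\min}x}$. This is not integrable against $e^{-\eta_it}$ at $-\infty$ when $\eta_i\ge3\eta_{\min}$.
\item More generally, a gain of $e^{2\eta_{\min}x}$ cannot absorb a gap $\eta_k-\eta_i>2\eta_{\min}$, however negative $x_0$ is. The inductive peeling you mention is not specified.
\end{itemize}
What makes the paper's contraction work is the componentwise weight $Z(x)=\max_je^{-\eta_jx}|z_j(x)|$. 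It is available from the start, because zero data means exactly $z_j=o(e^{\eta_jx})$. It is combined with the rank-one form of the coupling, $U_i^a\sum_jU_j^az_j$, whose prefactor $U_i^a=O(e^{\eta_ix})$ gives $|F_i|\le Ce^{(\eta_i+2\eta_*)x}Z$ for every $i$. One application of Lemma~\ref{lem:zero-left-principle} then suffices, and there is nothing to peel.

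Your assertion that every decaying solution has well-defined $b_i$ with $\phi_i=b_ie^{\eta_ix}+o(e^{\eta_ix})$ also does not follow from Levinson alone. The $i$-th component of the mode $\sim e^{\eta_kx}e_k$ with $\eta_k<\eta_i$ is only known to be $o(e^{\eta_kx})$. The paper gets $b_i$ from the same rank-one structure.

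In comparison, your route gives a short dimension count but imports a nontrivial classical theorem. The paper's route is self-contained and reuses the lemma from the classification proof. It also yields the explicit expansion $\phi=\sum_kb_k^-\partial_{a_k}U^a$ in terms of the left asymptotic coefficients of $\phi$.
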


We now pass from the case of pairwise distinct spectral parameters to the general case
of repeated spectral values.
Roughly speaking, we can illustrate that these repeated parameters do not however create new scalar profiles. More precisely, we shall prove that after grouping
equal spectral values, all components in the same spectral block are
multiples of one common profile, with only the natural rotational freedom
inside that block.
Let
\[
\mu_1<\mu_2<\cdots<\mu_K<0
\]
be the distinct spectral values, which have their multiplicities
\(k_1,\cdots,k_K\), so that \(k_1+\cdots+k_K=N\).
%We write
%\[
%\mu_\alpha=-\eta_\alpha^2,\ \  \eta_\alpha>0,
%\]
%and set
Set
\begin{equation}\label{eq:multi-index}
N_\alpha:=k_1+\cdots+k_\alpha,\ \  N_0:=0,
\ \
I_\alpha:=\{N_{\alpha-1}+1,\cdots,N_\alpha\}.
\end{equation}
Let $\nu_1,\cdots,\nu_N$ be the full list of the spectral parameters counted with their
multiplicities, namely
\begin{equation}\label{eq:multi-nu}
\nu_i=\mu_\alpha,
\ \
i\in I_\alpha,\quad \alpha=1,\cdots,K .
\end{equation}
For the reduced distinct-root system
\begin{equation}\label{eq:reduced-system}
V_\alpha''
+2\left(\sum_{\beta=1}^K V_\beta^2\right)V_\alpha
+\mu_\alpha V_\alpha=0,
\ \  \alpha=1,\cdots,K,
\end{equation}
let
\[
V^a=(V_1^a,\cdots,V_K^a),
\ \
a=(a_1,\cdots,a_K)\in(\mathbb R\setminus\{0\})^K,
\]
be the determinant family given by Theorem~\ref{thm1.1}.

For
\[
n^{(\alpha)}
=
(n_i^{(\alpha)})_{i\in I_\alpha}
\in \mathbb S^{k_\alpha-1},
\ \  \alpha=1,\cdots,K,
\]
namely,
\[
\sum_{i\in I_\alpha} (n_i^{(\alpha)})^2=1,
\]
define
\begin{equation}\label{eq:multi-lift}
U_i^{a,n}(x)=n_i^{(\alpha)}V_\alpha^a(x),
\ \
i\in I_\alpha,\ \ \alpha=1,\cdots,K.
\end{equation}
We denote the extended family by
\begin{equation}\label{eq:multi-family}
\mathcal M_{\mathrm{multi}}
:=
\left\{
U^{a,n} \text{ defined by } \eqref{eq:multi-lift}:\,
a\in(\mathbb R\setminus\{0\})^K,\
n^{(\alpha)}\in\mathbb S^{k_\alpha-1}
\right\}.
\end{equation}
We shall derive the following property.

\begin{theorem}
\label{thm:completeness-multiple-root}
Let $I_\alpha$ and $\nu_1,\cdots,\nu_N$ be defined by \eqref{eq:multi-index} and \eqref{eq:multi-nu},  respectively, and assume $
u=(u_1,\cdots,u_N)\in H^1(\mathbb R)^N$
is a solution of
\begin{equation}\label{eq:multi-system}
	u_i''
	+2\left(\sum_{j=1}^N u_j^2\right)u_i
	+\nu_i u_i=0\ \ in \,\ \R,
	\ \  i=1,\cdots,N.
\end{equation}
If \(\sum_{i\in I_\alpha}u_i^2\not\equiv0\) holds for every
\(\alpha=1,\cdots,K\), then \(u\in\mathcal M_{\mathrm{multi}}\), where
\(\mathcal M_{\mathrm{multi}}\) is given by \eqref{eq:multi-family}.
\end{theorem}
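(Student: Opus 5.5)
The plan is to show that inside each spectral block $I_\alpha$ all components are proportional to one common function, and then to apply Theorem~\ref{thm1.1} to the reduced $K$-component system \eqref{eq:reduced-system}.

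Fix $\alpha$ and take $i,j\in I_\alpha$. Both $u_i$ and $u_j$ solve the same scalar linear equation
\[
y''+\bigl(W(x)+\mu_\alpha\bigr)y=0,\qquad W:=2\sum_{k=1}^N u_k^2 .
\]
Hence their Wronskian $u_iu_j'-u_i'u_j$ is constant on $\R$. Since $u_i,u_j\in H^1(\R)$ solve this ODE, $u_i''\in L^2$ as well, so $u_i,u_i'\to0$ at $\pm\infty$. The Wronskian is therefore identically zero, and $u_i,u_j$ are linearly dependent.

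Since the equation is second order, the nonzero solutions $u_i$, $i\in I_\alpha$, that decay at $-\infty$ form a one-dimensional space. More concretely, the $H^1$ condition excludes the growing mode at $-\infty$, exactly as in the uniqueness part of the proof of Theorem~\ref{thm1.1}. By hypothesis some $u_{i_0}$ with $i_0\in I_\alpha$ is nontrivial, so for each $i\in I_\alpha$ we have $u_i=c_iu_{i_0}$ for some constant $c_i$. Set
\[
V_\alpha:=\Bigl(\sum_{i\in I_\alpha}c_i^2\Bigr)^{1/2}u_{i_0},\qquad n^{(\alpha)}_i:=c_i\Bigl(\sum_{i\in I_\alpha}c_i^2\Bigr)^{-1/2}.
\]
Then $n^{(\alpha)}\in\mathbb S^{k_\alpha-1}$, $u_i=n_i^{(\alpha)}V_\alpha$ for $i\in I_\alpha$, and
\[
\sum_{i\in I_\alpha}u_i^2=V_\alpha^2 .
\]

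Next substitute this into \eqref{eq:multi-system}. Since $\sum_{j=1}^N u_j^2=\sum_{\beta=1}^K V_\beta^2$, multiplying the equation for $u_{i_0}$ by a constant shows that $V=(V_1,\dots,V_K)$ solves \eqref{eq:reduced-system}. Each $V_\alpha$ lies in $H^1(\R)\setminus\{0\}$ and $\mu_1<\dots<\mu_K<0$, so Theorem~\ref{thm1.1} applies with $N$ replaced by $K$. It gives $V=V^a$ for some $a\in(\R\setminus\{0\})^K$, and therefore $u=U^{a,n}\in\mathcal M_{\mathrm{multi}}$.

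The only point that needs care is the vanishing of the Wronskian at infinity, that is, the decay of $u_i'$. It follows from $u_i''=-(W+\nu_i)u_i\in L^2$ because $W\in L^\infty$, which gives $u_i\in H^2(\R)$ and hence $u_i'\to0$ at $\pm\infty$. Everything else reduces to Theorem~\ref{thm1.1}.
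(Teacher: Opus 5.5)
Your proposal is correct and follows essentially the same route as the paper. In each block $I_\alpha$ the vanishing Wronskians make all components proportional to a nontrivial $u_{i_\alpha}$; the normalized profiles $V_\alpha$ then solve the reduced distinct-parameter system, and Theorem~\ref{thm1.1} finishes the argument. The only difference is how the constant Wronskian is shown to vanish. The paper notes that it lies in $L^1(\mathbb R)$, since $u_i'u_j,\,u_iu_j'\in L^1$, whereas you use $u_i\in H^2(\mathbb R)$ and the resulting decay of $u_i'$; both work. Your extra appeal to the one-dimensionality of solutions decaying at $-\infty$ is unnecessary once the Wronskian is zero.
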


\begin{remark}\label{rem:conj51}
One can note that Theorems~\ref{thm1.1} and
\ref{thm:completeness-multiple-root} give a complete classification
on \(H^1(\mathbb R)^N\) solutions of \eqref{FM} for any
\(N\ge2\). In particular, in the case of pairwise distinct spectral
parameters, Theorem~\ref{thm1.1} proves the explicit formula conjectured in
\cite[Conjecture~5.1]{GLW2026}.
\end{remark}

We next describe the nondegeneracy of the extended family
$\mathcal M_{\mathrm{multi}}$ for the general case  $\nu_1\le\nu_2\le\cdots \le\nu_N<0$. Let
$U^{a,n}\in\mathcal M_{\mathrm{multi}}$
be given by \eqref{eq:multi-lift}, namely
\begin{equation}\label{eq:multi-U}
U^{a,n}_i(x)=n_i^{(\alpha)}V_\alpha^a(x),
\ \
i\in I_\alpha,\quad \alpha=1,\cdots,K.
\end{equation}
Since $n^{(\alpha)}\in \mathbb S^{k_\alpha-1}$, we have
\begin{equation}\label{eq:mul-S}
S^{a}=\sum_{\ell=1}^N (U^{a,n}_\ell)^2
=
\sum_{\beta=1}^K (V_\beta^a)^2 .
\end{equation}
The linearized operator at $U^{a,n}$ is  defined by
\begin{equation}\label{eq:multi-LU}
(\mathcal L_{U^{a,n}}\phi)_i
:=
-\phi_i''
-\nu_i\phi_i
-2S^{a}\phi_i
-4U^{a,n}_i\sum_{\ell=1}^N U^{a,n}_\ell\phi_\ell\ \ \,\mbox{in} \,\ \R,
\ \  i=1,\cdots,N.
\end{equation}
In view of \eqref{eq:multi-nu}, this can equivalently be written as
\begin{equation}\label{eq:multi-LU-block}
(\mathcal L_{U^{a,n}}\phi)_i
=
-\phi_i''
-\mu_\alpha\phi_i
-2S^{a}\phi_i
-4U^{a,n}_i\sum_{\ell=1}^N U^{a,n}_\ell\phi_\ell\ \ \,\mbox{in} \,\ \R,
\ \  i\in I_\alpha .
\end{equation}

The linearized operator at the reduced solution $V^a$ is  defined by
\begin{equation}\label{eq:multi-LV}
(\mathcal L_{V^a}p)_\alpha
:=
-p_\alpha''
-\mu_\alpha p_\alpha
-2S^{a}p_\alpha
-4V_\alpha^a\sum_{\beta=1}^K V_\beta^a p_\beta\ \ \,\mbox{in} \,\ \R,
\ \  \alpha=1,\cdots,K.
\end{equation}
Applying Theorem~\ref{prop:nondegeneracy}
to the reduced distinct-root system \eqref{eq:reduced-system}, we have
\begin{equation}\label{eq:reduced-nondeg}
\ker_{H^1(\mathbb R)^K}\mathcal L_{V^a}
=
\operatorname{span}
\left\{
\partial_{a_1}V^a,\cdots,\partial_{a_K}V^a
\right\}.
\end{equation}
The nondegeneracy  in the repeated-parameter case has two types
of kernel directions.  The first type comes from changing the reduced
parameters \(a_\gamma\).  The second type comes from rotating the unit
vectors \(n^{(\alpha)}\) inside the spectral blocks.
The tangent space $T_{U^{a,n}}\mathcal M_{\mathrm{multi}}$ is defined as the span of the
following directions. Firstly, set for $\gamma=1,\cdots,K$,
\begin{equation}\label{eq:tangent-a-direction}
\Phi_i^{(\gamma)}
=
n_i^{(\alpha)}\partial_{a_\gamma}V_\alpha^a,
\ \
i\in I_\alpha,\quad \alpha=1,\cdots,K.
\end{equation}
Secondly, for each $\alpha=1,\cdots,K$ and each tangent vector
$\xi$ to the unit sphere $\mathbb S^{k_\alpha-1}$ at $n^{(\alpha)}$, namely
\[
\xi=(\xi_i)_{i\in I_\alpha}\in T_{n^{(\alpha)}}\mathbb S^{k_\alpha-1}
=
\left\{\zeta\in\mathbb R^{k_\alpha}:\quad\zeta\cdot n^{(\alpha)}=0\right\},
\]
set
%Second, for each $\alpha=1,\cdots,K$ and each
%\[
%\xi=(\xi_i)_{i\in I_\alpha}\in \mathbb R^{k_\alpha},
%\ \
%\xi\cdot n^{(\alpha)}=0,
%\]
\begin{equation}\label{eq:tangent-rotation-direction}
\Psi_i^{(\alpha,\xi)}
=
\begin{cases}
	\xi_i V_\alpha^a, & i\in I_\alpha,\\
	0, & i\notin I_\alpha.
\end{cases}
\end{equation}
Thus
\begin{equation}\label{eq:tangent-space}
T_{U^{a,n}}\mathcal M_{\mathrm{multi}}
=
\operatorname{span}
\left\{
\Phi^{(\gamma)},\ \Psi^{(\alpha,\xi)}:
\gamma=1,\cdots,K,\
\alpha=1,\cdots,K,\
\xi\cdot n^{(\alpha)}=0
\right\}.
\end{equation}
Together with Theorem \ref{prop:nondegeneracy}, the following theorem addresses the  nondegeneracy  for the general case  $\nu_1\le\cdots\le\nu_N<0$.

\begin{theorem}
\label{prop:nondegeneracy-multiple-roots}
Let $U^{a,n}\in\mathcal M_{\mathrm{multi}}$ and $\mathcal L_{U^{a,n}}$ be given by
\eqref{eq:multi-U} and
\eqref{eq:multi-LU}, respectively.
%Assume that, for every $\alpha=1,\cdots,K$,
%\[
%\sum_{i\in I_\alpha}(U_i^{a,n})^2(x)\not\equiv 0,
%\]
%where $I_\alpha$ is defined by \eqref{eq:multi-index}.
Then we have
$\dim\ker_{H^1(\mathbb R)^N}\mathcal L_{U^{a,n}}=N$ and
\[
\ker_{H^1(\mathbb R)^N}\mathcal L_{U^{a,n}}
=
T_{U^{a,n}}\mathcal M_{\mathrm{multi}},
\]
where $T_{U^{a,n}}\mathcal M_{\mathrm{multi}}$ is given by \eqref{eq:tangent-space}.
\end{theorem}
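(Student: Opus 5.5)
The plan is to decompose any kernel element block by block into a part parallel to $n^{(\alpha)}$ and a part orthogonal to it. The parallel parts will satisfy the reduced linearized equation, so we can apply \eqref{eq:reduced-nondeg}. The orthogonal parts will satisfy a scalar eigenvalue equation, which we can solve by simplicity of one-dimensional eigenvalues.

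\emph{Decomposition.} Let $\phi\in H^1(\mathbb R)^N$ with $\mathcal L_{U^{a,n}}\phi=0$. For each $\alpha$ set
\[
p_\alpha:=\sum_{i\in I_\alpha}n_i^{(\alpha)}\phi_i,
\qquad
\psi_i:=\phi_i-n_i^{(\alpha)}p_\alpha,\ \ i\in I_\alpha .
\]
Then $(\psi_i)_{i\in I_\alpha}\cdot n^{(\alpha)}=0$ pointwise, and $p_\alpha,\psi_i\in H^1(\mathbb R)$. By \eqref{eq:multi-U},
\[
\sum_{\ell=1}^N U_\ell^{a,n}\phi_\ell=\sum_{\beta=1}^K V_\beta^a\sum_{i\in I_\beta}n_i^{(\beta)}\phi_i=\sum_{\beta=1}^K V^a_\beta p_\beta .
\]
Using \eqref{eq:multi-LU-block}, the $i$-th equation for $i\in I_\alpha$ reads
\[
-\phi_i''-\mu_\alpha\phi_i-2S^a\phi_i-4n_i^{(\alpha)}V^a_\alpha\sum_\beta V^a_\beta p_\beta=0 .
\]
Multiplying by $n_i^{(\alpha)}$, summing over $I_\alpha$ and using $|n^{(\alpha)}|=1$ gives exactly $(\mathcal L_{V^a}p)_\alpha=0$ for all $\alpha$. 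Hence, by \eqref{eq:reduced-nondeg} (i.e. Theorem~\ref{prop:nondegeneracy} applied to \eqref{eq:reduced-system}), $p=\sum_\gamma c_\gamma\partial_{a_\gamma}V^a$, and therefore $(n_i^{(\alpha)}p_\alpha)_i=\sum_\gamma c_\gamma\Phi^{(\gamma)}$. Subtracting $n_i^{(\alpha)}$ times the projected equation from the $i$-th equation kills the coupling term, because it is proportional to $n^{(\alpha)}$. We obtain
\[
-\psi_i''-2S^a\psi_i=\mu_\alpha\psi_i,\qquad i\in I_\alpha ,
\]
that is, $H_a\psi_i=\mu_\alpha\psi_i$ with $H_a=-\frac{d^2}{dx^2}-2S^a$.

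\emph{Scalar step.} Since $V_\alpha^a\in H^1(\mathbb R)$ is a nontrivial solution of the same equation $H_aV^a_\alpha=\mu_\alpha V^a_\alpha$, we use simplicity of $H^1$ eigenvalues in one dimension. The Wronskian $W=\psi_iV_\alpha^{a\prime}-\psi_i'V_\alpha^a$ is constant. Since both functions and their derivatives decay at $-\infty$ (from the equation, $\psi_i''\in L^2$, so $\psi_i\in H^2$ and $\psi_i,\psi_i'\to0$), we get $W\equiv0$. Hence $\psi_i=\xi_iV^a_\alpha$ for a constant $\xi_i$. Because $V^a_\alpha\not\equiv0$, the pointwise orthogonality gives $\xi\cdot n^{(\alpha)}=0$. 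Thus $\psi=\sum_\alpha\Psi^{(\alpha,\xi^{(\alpha)})}$, which proves $\ker\subset T_{U^{a,n}}\mathcal M_{\mathrm{multi}}$.

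\emph{Converse inclusion and dimension.} Running the same computation backwards shows $\Phi^{(\gamma)}$ and $\Psi^{(\alpha,\xi)}$ lie in the kernel:
\begin{itemize}
\item for $\Phi^{(\gamma)}$, the projection $p=\partial_{a_\gamma}V^a$ satisfies $\mathcal L_{V^a}p=0$ and $\psi=0$;
\item for $\Psi^{(\alpha,\xi)}$, we have $p=0$ and $\psi_i=\xi_iV^a_\alpha$, which solves the scalar equation.
\end{itemize}
Both are in $H^1$ since $V^a$ and $\partial_aV^a$ are. For linear independence, the $\Phi$-directions are parallel to $n^{(\alpha)}$ blockwise and the $\Psi$-directions are orthogonal to it, so the two families are complementary. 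The $\Phi^{(\gamma)}$ are independent because $\partial_{a_\gamma}V^a$ are, by the dimension count in Theorem~\ref{prop:nondegeneracy}. The $\Psi$'s contribute $\sum_\alpha(k_\alpha-1)$ independent directions because each $V^a_\alpha\not\equiv0$. The total dimension is therefore $K+\sum_\alpha(k_\alpha-1)=N$.

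\emph{Main obstacle.} The only delicate point is the projection step: one must check that the coupling term $-4U_i^{a,n}\sum_\ell U^{a,n}_\ell\phi_\ell$ lands entirely in the $n^{(\alpha)}$-direction, so that the orthogonal part decouples completely. It is also needed that the projected equation reproduces exactly $\mathcal L_{V^a}$ with the same $S^a$, which uses \eqref{eq:mul-S}. The remaining ingredient, regularity and decay of $\psi_i$ for the Wronskian argument, is routine.
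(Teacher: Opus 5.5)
Your proposal is correct and follows the paper's proof essentially step by step. It uses the same parallel/orthogonal block decomposition, reduces the parallel part to $\mathcal L_{V^a}$ and applies the reduced nondegeneracy, kills the orthogonal part with a vanishing Wronskian against $V^a_\alpha$ (you use $H^2$ decay where the paper uses that a constant lying in $L^1$ must vanish), and ends with the same count $K+\sum_\alpha(k_\alpha-1)=N$. The only cosmetic difference is that you check $T_{U^{a,n}}\mathcal M_{\mathrm{multi}}\subset\ker$ by direct substitution into the decomposition, whereas the paper differentiates the family along curves in $a$ and on the spheres $\mathbb S^{k_\alpha-1}$.
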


%The next identity is obtained by reorganizing the constants of motion
%introduced in \cite{GLW2026}. More precisely, these constants
%imply a pointwise polynomial relation involving the solution itself and its
%first derivative. This leads to the following new algebraic identity, which
%will play an important role in the proof of the \(L^2\)-mass formulas below.
%\begin{theorem}\label{thm:algiden}
%Under the same assumptions as in
%Theorem~\ref{thm:completeness-multiple-root}, for every \(\alpha=1,\cdots,K\), we have
%\begin{equation}\label{eq:algebraic-identity}
%	\sum_{j\in I_\alpha}
%	\left[
%	(u_j')^2+\big(\sum_{l=1}^N u_l^2+\mu_\alpha\big)u_j^2
%	\right]
%	+
%	\sum_{\beta\neq\alpha}
%	\sum_{\substack{j\in I_\alpha\\ l\in I_\beta}}
%	\frac{(u_j'u_l-u_ju_l')^2}{\mu_\beta-\mu_\alpha}
%	=0 .
%\end{equation}
%\end{theorem}
%
We finally derive the following exact $L^2$-mass identity of   solutions for \eqref{FM}, whose proof is based on an algebraic identity generated by the constants of motion, rather than on the explicit determinant formula of solutions for \eqref{FM}.

\begin{theorem}\label{thm L2iden}
Under the same assumptions of
Theorem~\ref{thm:completeness-multiple-root}, any solution $u=(u_1, \cdots, u_N)$ of \eqref{FM} satisfies for every
\(\alpha=1,\cdots,K\),
\begin{equation}\label{L2ident}
	\int_{\mathbb R}\sum_{i\in I_\alpha}u_i^2(x)\,dx
	=
	2\sqrt{|\mu_\alpha|}>0,
\end{equation}
where the set $I_\alpha$ is defined by \eqref{eq:multi-index}.
\end{theorem}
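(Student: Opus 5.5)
The plan is to reduce to the pairwise distinct case and then compute the masses from the determinant representation, using a matrix Riccati identity. The paper also mentions a proof via an algebraic identity coming from the constants of motion; the route below uses the classification instead.

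\textbf{Step 1: reduction to distinct parameters.} By Theorem~\ref{thm:completeness-multiple-root}, $u=U^{a,n}\in\mathcal M_{\mathrm{multi}}$. Since $n^{(\alpha)}\in\mathbb S^{k_\alpha-1}$, we get
\[
\sum_{i\in I_\alpha}u_i^2=(V_\alpha^a)^2 .
\]
So \eqref{L2ident} reduces to showing $\int_{\mathbb R}(U_i^a)^2\,dx=2\eta_i$ for the determinant solution $U^a$ of Theorem~\ref{thm1.1}, with pairwise distinct $\mu_i=-\eta_i^2$.

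\textbf{Step 2: determinant-ratio form.} I will use the representation $U_i^a=\sqrt{2\eta_i}\,q_i$ with $q=(I+M)^{-1}v$, which the paper uses in the proof of Theorem~\ref{thm1.1}. Here
\[
v_j(x)=\frac{a_je^{\eta_jx}}{\sqrt{2\eta_j}} .
\]
Differentiating \eqref{MatrixM} gives $m_{jk}'=v_jv_k$, that is,
\[
M'=vv^{T}.
\]
Set $Q:=(I+M)^{-1}$, which is symmetric. Then
\[
Q'=-QM'Q=-(Qv)(Qv)^T=-qq^T .
\]
Integrating over $\mathbb R$ gives the matrix identity
\[
\int_{\mathbb R}q\,q^T\,dx=\lim_{x\to-\infty}Q(x)-\lim_{x\to+\infty}Q(x).
\]

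\textbf{Step 3: the two limits.} As $x\to-\infty$ we have $M\to0$, so $Q\to I$. As $x\to+\infty$ I must show $Q(x)\to0$; this is the main point to check. The key fact is that $M(x)$ is a Gram matrix:
\[
m_{jk}(x)=\int_{-\infty}^x v_j v_k\,dy .
\]
This follows from $M'=vv^T$ together with $M(-\infty)=0$. Hence $M$ is positive semidefinite, so $\|Q\|\le1$ and $I+M$ is always invertible.

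For $Q(x)\to0$ it suffices that $\lambda_{\min}(M(x))\to\infty$. Write $M=E\,C\,E$ with $E=\operatorname{diag}(a_je^{\eta_jx}/\sqrt{2\eta_j})$ and $C_{jk}=1/(\eta_j+\eta_k)$. The Cauchy matrix $C$ is positive definite because the $\eta_j$ are distinct. Therefore
\[
\lambda_{\min}(M)\ge\lambda_{\min}(C)\,\min_j\frac{a_j^2e^{2\eta_jx}}{2\eta_j}\to\infty .
\]

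\textbf{Conclusion.} Combining the limits, $\int_{\mathbb R}qq^T\,dx=I$. The diagonal entries give $\int q_i^2=1$, hence $\int (U_i^a)^2=2\eta_i=2\sqrt{|\mu_i|}$, which is \eqref{L2ident}. The off-diagonal entries give the orthogonality $\int_{\mathbb R} U_i^aU_j^a\,dx=0$ for $i\neq j$ as a byproduct.

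The only genuine input from earlier results is the identity $U_i^a=\sqrt{2\eta_i}\,q_i$ with $q=(I+M)^{-1}v$. I will check that it matches \eqref{exp1} through the determinant-ratio identity invoked in the proof of Theorem~\ref{thm1.1}. That verification is the step I expect to need the most care.
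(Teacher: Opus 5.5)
Your proof is correct, and it takes a genuinely different route from the paper.

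\textbf{The paper's route.} The paper does not use the determinant formula here. After the block reduction $u_i=n_i^{(\alpha)}v_\alpha$ (only the Wronskian part of Theorem~\ref{thm:completeness-multiple-root} is really needed), it proceeds in four steps:
\begin{itemize}
\item it evaluates the constant-of-motion polynomial of Lemma~\ref{lem:polynomial-identity-function} at $\lambda=-\mu_\alpha$ to get the pointwise identity of Proposition~\ref{thm:algiden};
\item it uses that identity to build $Q_\alpha=v_\alpha'/v_\alpha+\sum_{\beta\neq\alpha}W_{\alpha\beta}v_\beta/(\delta_{\alpha\beta}v_\alpha)$, which satisfies $Q_\alpha'=-v_\alpha^2$;
\item it shows that the singularities of $Q_\alpha$ at the zeros of $v_\alpha$ are removable;
\item it reads off $Q_\alpha(\mp\infty)=\pm\eta_\alpha$ from two-sided exponential asymptotics of the solution.
\end{itemize}

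\textbf{Your route.} You go through the full classification and the ratio formula of Lemma~\ref{lem:ratio-formula}. That lemma is already proved in the paper, so the verification you flag as the delicate step is not an open issue. You then use $\bigl((I+M)^{-1}\bigr)'=-qq^T$, which gives $\int_{\mathbb R}qq^T\,dx=I-0$. Your argument for $(I+M(x))^{-1}\to0$, via $\lambda_{\min}(M)\ge\lambda_{\min}(C)\min_j v_j^2\to\infty$, is right, and it uses $a_j\neq0$ essentially.

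\textbf{What each buys.} Your route is shorter. It needs no zero-crossing analysis and no separate asymptotic analysis of the solution at $+\infty$, since the limit is read off from the explicit $M$. It also yields the whole Gram matrix $\int_{\mathbb R}U_i^aU_j^a\,dx=2\eta_i\delta_{ij}$ at once, whereas the paper gets orthogonality separately from the self-adjointness of $H_a$. The paper's route, in exchange, applies directly to any $H^1$ solution of the reduced system without the explicit determinant representation. It also produces the pointwise algebraic identity of Proposition~\ref{thm:algiden} as a by-product, which is why the authors present it that way.
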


%We finally record a consequence of the classified structure: each spectral
%block has an exact \(L^2\)-mass determined only by the corresponding
%spectral parameter.  Although the classification above is expressed through
%the determinant representation, the proof of this identity does not rely on
%that formula.  It follows directly from the system itself.  More precisely,
%a pointwise algebraic identity, recorded in
%Remark~\ref{rem:algebraic-identity} as a by-product of the proof, yields a
%first-order differential relation whose limits at \(\pm\infty\) determine
%the mass.

%\begin{remark}\label{rem:algebraic-identity}
The proof of Theorem~\ref{thm L2iden} yields the
following pointwise algebraic identity: Under the same assumptions of
Theorem~\ref{thm:completeness-multiple-root}, we have for every
\(\alpha=1,\cdots,K\),
\begin{equation*}
	\sum_{j\in I_\alpha}
	\left[
	(u_j')^2+
	\left(\sum_{l=1}^N u_l^2+\mu_\alpha\right)u_j^2
	\right]
	+
	\sum_{\beta\neq\alpha}
	\sum_{\substack{j\in I_\alpha\\ l\in I_\beta}}
	\frac{(u_j'u_l-u_ju_l')^2}{\mu_\beta-\mu_\alpha}
	=0 .
\end{equation*}
This identity is derived from the spectral polynomial identity of
Lemma~\ref{lem:polynomial-identity-function} in Section~5.
%
%It is not
%needed for the classification result, but is useful for deriving the exact
%\(L^2\)-mass formulas above.
%\end{remark}

\begin{remark}\label{rem:normalized-solutions}
Theorem~\ref{thm L2iden} also gives a simple description of the
componentwise normalized solutions for \eqref{FM}. Indeed, if a solution
\(u\in\mathcal M_{\mathrm{multi}}\) of \eqref{FM} is written as
\[
u_i=n_i^{(\alpha)}V_\alpha^a,
\ \  i\in I_\alpha,\quad \alpha=1,\cdots,K,
\]
then Theorem~\ref{thm L2iden} gives that
\[
\int_{\mathbb R}u_i^2\,dx
=
\bigl(n_i^{(\alpha)}\bigr)^2
\int_{\mathbb R}(V_\alpha^a)^2\,dx
=
2\eta_\alpha \bigl(n_i^{(\alpha)}\bigr)^2 .
\]
Hence \(u=(u_1,\cdots,u_N)\) satisfies
\[
\int_{\mathbb R}u_i^2\,dx=1,
\ \  i=1,\cdots,N,
\]
if and only if
\[
\eta_\alpha=\sqrt{|\mu_\alpha|}=\frac{k_\alpha}{2}
\quad\text{and}\quad
\bigl(n_i^{(\alpha)}\bigr)^2=\frac1{k_\alpha},
\ \  i\in I_\alpha,\quad \alpha=1,\cdots,K .
\]
\end{remark}

\begin{remark}
As a consequence of Theorems~\ref{thm:completeness-multiple-root} and
\ref{thm L2iden}, the system \eqref{FM} does not admits any orthonormal solution, whose components
are pairwise orthogonal and have the same nonzero \(L^2\)-norm. This addresses the nonexistence of orthonormal solutions for \eqref{FM}, which was conjectured %after Theorem~8 of
by Frank-Gontier-Lewin~\cite[page 1793]{Lewin} and
 Guo-Luo-Wei~\cite[Conjecture~5.2]{GLW2026} for the general case \(N\ge 2\).
 \end{remark}

This paper is organized as follows. In Section~2, we construct a determinant family and verify the equations in the distinct-parameter case. Section~3 is devoted to the proofs of Theorems~\ref{thm1.1} and \ref{prop:nondegeneracy} on the classification and nondegeneracy of solutions for \eqref{FM} in the distinct-parameter case. In Section~4, we prove Theorems~\ref{thm:completeness-multiple-root} and \ref{prop:nondegeneracy-multiple-roots} on the classification and nondegeneracy results in the case of repeated spectral parameters.  Theorem~\ref{thm L2iden} is finally proved in Section~5 by deriving an algebraic identity and the exact $L^2$-mass formulas.

\section{Construction   of Determinant Solutions}
In this section, we verify that the explicit determinant formula stated in
Theorem \ref{thm1.1} satisfies \eqref{FM}. Our argument is divided into two
parts. In Subsection 2.1,  we rewrite the explicit formula \eqref{exp1} for
\(u=(u_1,u_2,\cdots,u_N)\) in terms of an auxiliary vector \(q\). In
Subsection 2.2, we use this vector formulation to verify the differential
equation. The key point is that the auxiliary vector \(q\) satisfies a closed second-order
system, which is equivalent to \eqref{FM} after a simple rescaling.

We now introduce the notation used in the determinant formula. Throughout this section, we assume that
\[
\mu_1<\mu_2<\cdots<\mu_N<0
\quad\hbox{and}\quad
\eta_i=\sqrt{-\mu_i},\quad i=1,2,\cdots,N.
\]
Fixing
$
a=(a_1,\cdots,a_N)\in(\mathbb R\setminus\{0\})^N,
$
we recall that
\[
m_{jk}(x)
=
\frac{a_j a_k}{2\sqrt{\eta_j\eta_k}(\eta_j+\eta_k)}
e^{(\eta_j+\eta_k)x},
\ \
M(x)=(m_{jk}(x))_{1\le j,k\le N},
\]
and
\[
B^{(i)}
=
\operatorname{diag}\left(
\frac{\eta_i-\eta_1}{\eta_i+\eta_1},
\cdots,
\frac{\eta_i-\eta_N}{\eta_i+\eta_N}
\right).
\]
Define
\begin{equation}\label{f-formula}
	f(x):=\det\bigl(I+M(x)\bigr)
\end{equation}
and
\begin{equation}\label{g-formula}
	g_i(x):=a_i e^{\eta_i x}\det\bigl(I+B^{(i)}M(x)\bigr),
	\ \  i=1,\cdots,N,
\end{equation}
such that
\[
u_i(x)=\frac{g_i}{f}\quad\hbox{for}\,\ i=1,2,\cdots,N.
\]
We shall derive the following precise verification.

\begin{proposition}\label{prop:determinant-verification}
For
$\mu_1<\mu_2<\cdots<\mu_N<0$ and $\eta_i=\sqrt{-\mu_i}$, let \(a=(a_1,\cdots,a_N)\in(\R\setminus\{0\})^N\), and define \(f\) and \(g_i\)
by \eqref{f-formula} and \eqref{g-formula}, respectively. Then
\[
u_i(x):=\frac{g_i(x)}{f(x)},\ \  i=1,\cdots,N,
\]
defines a classical solution $u=(u_1,\cdots,u_N)\in \big(H^1(\R)\setminus\{0\}\big)^N$ of \eqref{FM}.
\end{proposition}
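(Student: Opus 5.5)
The plan is to leave the determinants behind as soon as possible and work with the vector formulation announced in the introduction. Write $c_j:=a_j/\sqrt{2\eta_j}$, $\varphi_j(x):=c_je^{\eta_jx}$, $\varphi=(\varphi_1,\cdots,\varphi_N)^T$ and $\Lambda:=\operatorname{diag}(\eta_1,\cdots,\eta_N)$. Then $m_{jk}=\varphi_j\varphi_k/(\eta_j+\eta_k)$, which gives the two structural identities I will use throughout:
\[
M'=\varphi\varphi^T,\qquad \Lambda M+M\Lambda=\varphi\varphi^T .
\]
Moreover $M$ is a Gram matrix, since $m_{jk}=\int_{-\infty}^x\varphi_j\varphi_k\,dt$, so $M\ge0$. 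Hence $f=\det(I+M)\ge1$, and $I+M$ is invertible and smooth in $x$. Set $q:=(I+M)^{-1}\varphi$.

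\textbf{Step 1 (determinant-ratio identity).} I will show that $g_i/f=\sqrt{2\eta_i}\,q_i$. Since $I+B^{(i)}M=(\eta_iI+\Lambda)^{-1}\bigl[(\eta_iI+\Lambda)+(\eta_iI-\Lambda)M\bigr]$, substituting $-\Lambda M=M\Lambda-\varphi\varphi^T$ gives
\[
(\eta_iI+\Lambda)+(\eta_iI-\Lambda)M=(I+M)(\eta_iI+\Lambda)-\varphi\varphi^T ,
\]
which is a rank-one perturbation. The matrix determinant lemma then yields
\[
\frac{\det(I+B^{(i)}M)}{\det(I+M)}=1-\varphi^T(\eta_iI+\Lambda)^{-1}q .
\]
The $i$-th row of $(I+M)q=\varphi$ reads $q_i+\varphi_i\sum_k\varphi_kq_k/(\eta_i+\eta_k)=\varphi_i$. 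Therefore $\varphi_i\bigl(1-\sum_k\varphi_kq_k/(\eta_i+\eta_k)\bigr)=q_i$, and so $g_i/f=a_ie^{\eta_ix}q_i/\varphi_i=\sqrt{2\eta_i}\,q_i$.

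\textbf{Step 2 (closed second-order system).} Since $2\sum_ku_k^2=4q^T\Lambda q$, equation \eqref{FM} becomes
\[
q''=\Lambda^2q-4(q^T\Lambda q)\,q .
\]
First, differentiating $(I+M)q=\varphi$ gives $(I+M)q'=\Lambda\varphi-\varphi\,(\varphi^Tq)$. Second, $(\log f)'=\operatorname{tr}\bigl((I+M)^{-1}\varphi\varphi^T\bigr)=\varphi^Tq=:\sigma$. I then aim for the two identities
\[
\sigma'=2q^T\Lambda q,\qquad q'=-\Lambda q+\sigma\,q\ \ \text{up to a correction handled by }\Lambda M+M\Lambda=\varphi\varphi^T .
\]
The cleanest route I would try is to compute $(I+M)(q'+\Lambda q)$. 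Using $M\Lambda q=\varphi\sigma-\Lambda Mq$ and $Mq=\varphi-q$, this gives
\[
(I+M)(q'+\Lambda q)=\Lambda\varphi-\varphi\sigma+\Lambda q+\varphi\sigma-\Lambda(\varphi-q)=2\Lambda q ,
\]
so $q'=-\Lambda q+2(I+M)^{-1}\Lambda q$. Define $w:=(I+M)^{-1}\Lambda q$. I will then differentiate $w$ in the same way, to express $w'$ through $q,w,\sigma$ and $q^T\Lambda q$, and use the symmetry of $M$ to get $\varphi^Tw=q^T\Lambda q$-type scalar identities. Combining these closes $q''$ into $\Lambda^2q-4(q^T\Lambda q)q$. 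This bookkeeping, which finds the right auxiliary vectors so that every term closes uniformly in $N$, is the main obstacle. If the $w$-route becomes messy, my fallback is to verify $q''-\Lambda^2q+4(q^T\Lambda q)q=0$ after multiplying by $I+M$. That reduces everything to polynomial identities in $\varphi,q,M$, which follow from the two structural identities.

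\textbf{Step 3 (regularity, decay, nontriviality).} Smoothness follows from $f\ge1$. As $x\to-\infty$ we have $M\to0$, so $q_i=\varphi_i+O(e^{3\eta_1x})$, giving $u_i\not\equiv0$ and exponential decay there. As $x\to+\infty$, I would expand $f$ and $g_i$ via Cauchy–Binet as sums of exponentials: the leading exponent of $f$ is $2\sum_k\eta_k$, coming from $\det M$, which is a positive Cauchy-determinant multiple of $\prod\varphi_k^2$. The exponent of $g_i$ is at most $\eta_i+2\sum_{k\ne i}\eta_k$, because $B^{(i)}$ has a zero in slot $i$. Hence $u_i=O(e^{-\eta_ix})$. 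Since the derivatives have the same form, $u\in(H^1(\R)\setminus\{0\})^N$.
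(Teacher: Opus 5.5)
\textbf{Gap: Step 2 is not carried out.} Step 2 is the actual content of the proposition. You correctly derive $(I+M)q'=\Lambda\varphi-\sigma\varphi$ and then $q'=-\Lambda q+2w$ with $w=(I+M)^{-1}\Lambda q$. After that, you only announce that differentiating $w$ ``closes'' the system, and you call this bookkeeping the main obstacle. As written, nothing establishes $q''=\Lambda^2q-4(q^T\Lambda q)q$, so the proposal does not yet show that $g_i/f$ solves \eqref{FM}.

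Your route does close, in a few lines. Put $r:=q^T\Lambda q$. Since $(I+M)^{-1}$ is symmetric, $\varphi^Tw=q^T\Lambda q=r$. Differentiating $(I+M)w=\Lambda q$ then gives $(I+M)w'=\Lambda q'-r\varphi=-\Lambda^2q+2\Lambda w-r\varphi$. On the other hand, the displacement identity gives $M\Lambda w=\varphi\varphi^Tw-\Lambda Mw=r\varphi-\Lambda(\Lambda q-w)$, whence $(I+M)(\Lambda w-2rq)=2\Lambda w-\Lambda^2q-r\varphi$. Invertibility of $I+M$ yields $w'=\Lambda w-2rq$. Therefore $q''=-\Lambda q'+2w'=\Lambda^2q-2\Lambda w+2\Lambda w-4rq=\Lambda^2q-4rq$.

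This is shorter than the paper's argument, which is essentially your fallback. The paper computes $Aq''$ and $A(\Lambda^2q-4rq)$ separately, using the auxiliary scalars $s=v^Tq$, $s'=2r$, $v^Tq'=r-s^2/2$ and $q^T\Lambda v=r+s^2/2$. Your $w$-route never needs $\sigma$, so the tentative aim $q'=-\Lambda q+\sigma q$ ``up to a correction'' should simply be dropped.

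\textbf{Steps 1 and 3.} Step 1 is correct and is a genuinely different, more economical proof of the ratio identity. The paper uses Cramer's rule, principal-minor expansions of $\det A_i$ and $\det(I+B^{(i)}M)$, and a row reduction and factorization of the Cauchy-type minors. You instead write $B^{(i)}=(\eta_iI+\Lambda)^{-1}(\eta_iI-\Lambda)$ and use $\Lambda M+M\Lambda=\varphi\varphi^T$. This turns $I+B^{(i)}M$ into a rank-one perturbation of $(I+M)(\eta_iI+\Lambda)$, so the same structural identity drives both Step 1 and Step 2.

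Step 3 supplies the $H^1$ membership and nontriviality that the paper's proof of this proposition leaves implicit. One small correction is needed there. The bound $q_i=\varphi_i+O(e^{3\eta_1x})$ is too weak to conclude $u_i\not\equiv0$ once $\eta_i\ge3\eta_1$. However, Step 1 already gives $q_i/\varphi_i=\det(I+B^{(i)}M)/\det(I+M)\to1$ as $x\to-\infty$, which is exactly what you need.
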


The proof of Proposition \ref{prop:determinant-verification} is divided into two steps.  We first establish in Subsection 2.1 a determinant
ratio identity which rewrites \(g_i/f\) in terms of \(q\).  We then verify in Subsection 2.2
that this vector \(q\) satisfies the closed differential system equivalent
to \eqref{FM}.

\subsection{A determinant ratio formula}

We begin with the introduction of an auxiliary vector.
Set
\begin{equation}\label{vvect}
	v(x):=
	\left(
	v_1(x),\cdots,v_N(x)
	\right)^T,
	\ \
	v_j(x):=\frac{a_j}{\sqrt{2\eta_j}}e^{\eta_jx}.
\end{equation}
Then $M(x)=V(x)C V(x)$, where
\begin{equation}\label{Vm}
	V(x):=\operatorname{diag}(v_1(x),\cdots,v_N(x)),
	\ \
	C:=\left(\frac{1}{\eta_j+\eta_k}\right)_{1\le j,k\le N}.
\end{equation}
We first observe that \(I+M(x)\) is invertible for every \(x\in\mathbb R\).
Indeed,
note that
\[
\frac1{\eta_j+\eta_k}
=
\int_0^\infty e^{-\eta_jt}e^{-\eta_kt}\,dt,
\]
and thus \(C\) is the Gram matrix of the functions
\(e^{-\eta_jt}\) in \(L^2(0,\infty)\). Since the numbers
\(\eta_j\) are distinct, these functions are linearly independent, and
therefore \(C\) is positive definite. Since \(V(x)\) is invertible, \(M(x)=V(x)CV(x)\) is positive definite by
congruence. In particular, \(I+M(x)\) is positive definite and hence
invertible for every \(x\in\mathbb R\).

Since $I+M(x)$ is invertible,  define
\begin{equation}\label{eq:q-definition}
q(x):=(I+M(x))^{-1}v(x),
\end{equation}
We now prove the following lemma.

\begin{lemma}\label{lem:ratio-formula}
Let \(f(x)\), \(g_i(x)\)  and \(q(x)\) be defined by
\eqref{f-formula}, \eqref{g-formula}, and \eqref{eq:q-definition}.  Set $u_i(x):=\frac{g_i(x)}{f(x)}$, then for every
\(i=1,\cdots,N\),
\[
u_i(x)=
\sqrt{2\eta_i}\,q_i(x),
\]
where \(q_i\) denotes the \(i\)-th component of \(q\).
\end{lemma}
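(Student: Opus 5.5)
The plan is to express $\det(I+B^{(i)}M)$ as a rank-one perturbation of $I+M$. Then the matrix determinant lemma turns the ratio $g_i/f$ into a linear functional of $q=(I+M)^{-1}v$.

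\emph{Step 1: rank-one structure.} The key observation is an algebraic identity for the entries of $B^{(i)}M$. Since $m_{jk}=v_jv_k/(\eta_j+\eta_k)$, we have
\[
(B^{(i)}M)_{jk}=\frac{\eta_i-\eta_j}{\eta_i+\eta_j}\,\frac{v_jv_k}{\eta_j+\eta_k}.
\]
We use the partial-fraction identity
\[
\frac{\eta_i-\eta_j}{(\eta_i+\eta_j)(\eta_j+\eta_k)}=\frac{1}{\eta_j+\eta_k}-\frac{2\eta_i}{(\eta_i+\eta_j)(\eta_j+\eta_k)},
\]
together with
\[
\frac{2\eta_i}{(\eta_i+\eta_j)(\eta_j+\eta_k)}\,\cdot\,(\eta_i-\eta_k)\;\text{-type manipulations}.
\]
A cleaner route is the Cauchy-matrix displacement: $\frac{1}{\eta_j+\eta_k}\cdot\frac{\eta_i-\eta_j}{\eta_i+\eta_j}$ differs from $\frac{1}{\eta_j+\eta_k}\cdot\frac{\eta_i-\eta_k}{\eta_i+\eta_k}$ by a product term. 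Indeed, writing $b_j=\frac{\eta_i-\eta_j}{\eta_i+\eta_j}$, one checks
\[
\frac{b_j}{\eta_j+\eta_k}=\frac{1}{\eta_j+\eta_k}-\frac{2\eta_i}{(\eta_i+\eta_j)(\eta_i+\eta_k)}\cdot\frac{\eta_i+\eta_k}{\eta_j+\eta_k}.
\]
So instead I would aim directly for the similarity form
\[
B^{(i)}M = D\,(M-2\eta_i\,w w^T)\,D^{-1},\qquad D=\operatorname{diag}\Bigl(\frac{1}{\eta_i+\eta_j}\Bigr)^{\pm1},\quad w_j=\frac{v_j}{\eta_i+\eta_j}.
\]
This rests on the identity
\[
\frac{\eta_i-\eta_j}{\eta_j+\eta_k}\cdot\frac{\eta_i+\eta_k}{\eta_i+\eta_j}=\frac{\eta_i+\eta_k-(\eta_j+\eta_k)\cdot\frac{2\eta_i}{\eta_i+\eta_j}\cdot\ldots}{\cdots},
\]
which reduces to $(\eta_i-\eta_j)(\eta_i+\eta_k)=(\eta_i+\eta_j)(\eta_j+\eta_k)\cdot\frac{(\eta_i-\eta_j)(\eta_i+\eta_k)}{(\eta_i+\eta_j)(\eta_j+\eta_k)}$. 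Concretely, I would verify the exact scalar identity
\[
\frac{(\eta_i-\eta_j)(\eta_i+\eta_k)}{(\eta_i+\eta_j)(\eta_j+\eta_k)}=1-\frac{2\eta_i}{\eta_i+\eta_j}\quad\text{(when summed appropriately)}.
\]
The correct and easily checked form is
\[
\frac{(\eta_i-\eta_j)(\eta_i+\eta_k)}{\eta_j+\eta_k}=(\eta_i+\eta_j)\cdot\frac{\eta_i-\eta_j}{\eta_j+\eta_k}\cdots,
\]
and finding the right diagonal conjugation $D$ that produces $I+B^{(i)}M=D(I+M-c\,ww^T)D^{-1}$ is the step I expect to be the main obstacle.

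\emph{Fallback for the obstacle.} If no clean conjugation emerges, I would use the identity $b_j=1-\frac{2\eta_j}{\eta_i+\eta_j}$. This gives $B^{(i)}M=M-2\,\operatorname{diag}\bigl(\frac{\eta_j}{\eta_i+\eta_j}\bigr)M$, and I would then exploit the Sylvester displacement equation
\[
EC+CE=\mathbf 1\mathbf 1^T,\qquad E=\operatorname{diag}(\eta_j),
\]
conjugated by $V$, namely $EM+ME=vv^T$. From this, $\operatorname{diag}\bigl(\frac{1}{\eta_i+\eta_j}\bigr)$ applied to $M$ can be rewritten via $(\eta_i I+E)^{-1}$ and the resolvent relation $(\eta_iI+E)^{-1}M(\eta_iI-E)\ldots$, reducing $I+B^{(i)}M$ to $(\eta_iI+E)^{-1}\bigl[(I+M)(\eta_iI+E)-\ldots\bigr]$ up to a rank-one term built from $v$ and $e_i$. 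The pivot is that the $i$-th diagonal entry $b_i=0$, so row $i$ of $B^{(i)}M$ vanishes. This should make the rank-one term proportional to $e_i\,v^T$-type data after the similarity.

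\emph{Step 2: conclude.} Once the identity $\det(I+B^{(i)}M)=\det(I+M)\bigl(1-y^T(I+M)^{-1}z\bigr)$ is obtained for explicit $y,z$, the matrix determinant lemma gives
\[
\frac{\det(I+B^{(i)}M)}{\det(I+M)}=1-y^T(I+M)^{-1}z.
\]
We then need to show that $a_ie^{\eta_ix}\bigl(1-y^T(I+M)^{-1}z\bigr)=\sqrt{2\eta_i}\,q_i$. Since $a_ie^{\eta_ix}=\sqrt{2\eta_i}\,v_i$, this amounts to
\[
v_i\bigl(1-y^T(I+M)^{-1}z\bigr)=e_i^T(I+M)^{-1}v.
\]
Writing $v_i=e_i^T(I+M)^{-1}(I+M)v\ldots$, this becomes a short linear-algebra identity using $EM+ME=vv^T$, which I would verify directly.
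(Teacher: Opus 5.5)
The rank-one idea is viable, but as written your proposal has a genuine gap. You never establish the one identity that carries the whole lemma, namely an explicit formula $\det(I+B^{(i)}M)=\det(I+M)\bigl(1-y^T(I+M)^{-1}z\bigr)$. You flag this yourself as the main obstacle, and Step~2 is conditional on it.

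Moreover, the concrete candidate you propose is false. With $w_j=v_j/(\eta_i+\eta_j)$ one computes
\[
\frac{1}{\eta_j+\eta_k}-\frac{2\eta_i}{(\eta_i+\eta_j)(\eta_i+\eta_k)}=\frac{(\eta_i-\eta_j)(\eta_i-\eta_k)}{(\eta_j+\eta_k)(\eta_i+\eta_j)(\eta_i+\eta_k)},
\qquad\text{i.e.}\qquad M-2\eta_i\,ww^T=B^{(i)}MB^{(i)}.
\]
Hence $\det(I+M-2\eta_iww^T)=\det\bigl(I+(B^{(i)})^2M\bigr)$, which is not $\det(I+B^{(i)}M)$. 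For $N=2$, $i=1$ these equal $1+b_2^2m_{22}$ and $1+b_2m_{22}$, with $0<b_2<1$ and $m_{22}>0$. Since similarity preserves determinants, no choice of $D$ rescues this.

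Two smaller errors: your partial-fraction displays are off by a sign, since $\frac{1}{\eta_j+\eta_k}-\frac{2\eta_i}{(\eta_i+\eta_j)(\eta_j+\eta_k)}=-\frac{b_j}{\eta_j+\eta_k}$. And the expectation that $b_i=0$ yields an $e_iv^T$-type term points the wrong way. In the determinant step the index $i$ enters only through the scalar $\eta_i$.

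Your fallback, however, is the right idea and closes in a few lines. Write $E=\operatorname{diag}(\eta_j)$ (the paper's $\Lambda$). From $EM+ME=vv^T$ you get $(\eta_iI-E)M=M(\eta_iI+E)-vv^T$, hence
\[
I+B^{(i)}M=(\eta_iI+E)^{-1}\bigl[(I+M)(\eta_iI+E)-vv^T\bigr].
\]
Equivalently, $(\eta_iI+E)(I+B^{(i)}M)(\eta_iI+E)^{-1}=I+M-vw^T$. This is the correct form of your conjugation: the rank-one term is the non-symmetric $vw^T$, not $2\eta_iww^T$.

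The matrix determinant lemma then gives
\[
\frac{\det(I+B^{(i)}M)}{\det(I+M)}=1-w^Tq=1-\sum_k\frac{v_kq_k}{\eta_i+\eta_k}.
\]
The $i$-th row of $(I+M)q=v$ reads exactly $q_i=v_i\bigl(1-\sum_k\frac{v_kq_k}{\eta_i+\eta_k}\bigr)$, so Step~2 needs no displacement relation at all. Multiplying by $\sqrt{2\eta_i}$ and using $a_ie^{\eta_ix}=\sqrt{2\eta_i}\,v_i$ gives $g_i/f=\sqrt{2\eta_i}\,q_i$.

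Completed this way, your route differs genuinely from the paper's. The paper applies Cramer's rule $q_i=\det A_i/\det A$. It then matches $\det A_i$ with $v_i\det(I+B^{(i)}M)$ term by term over subsets $I\subset\{1,\dots,N\}\setminus\{i\}$, using the principal-minor expansion and a Cauchy-type row reduction for each subset. The displacement argument replaces all of that combinatorics by the single relation $\Lambda M+M\Lambda=vv^T$, which the paper itself uses in Subsection~2.2. It also never needs $b_i=0$ separately.
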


\begin{proof}
For any \(i\in\{1,\cdots,N\}\), we prove the identity by Cramer's rule.
For simplicity, we suppress the variable \(x\) in the following computation.
Set
\[
A:=I+M.
\]
Let \(A_i\) be the matrix obtained from \(A\) by replacing its \(i\)-th
column by \(v\). Since \(q=A^{-1}v\), Cramer's rule gives that
\begin{equation}\label{eq:q-cramer}
	q_i=\frac{\det A_i}{\det A}.
\end{equation}
Thus, it remains to identify the numerator \(\det A_i\). We claim that
\begin{equation}\label{eq:Ai-main-identity}
	\det A_i
	=
	v_i\det\bigl(I+B^{(i)}M\bigr).
\end{equation}
Once this is proved, the desired conclusion follows immediately from the facts that
\(f=\det A\) and
\[
a_i e^{\eta_i x}=\sqrt{2\eta_i}\,v_i(x).
\]

We now prove \eqref{eq:Ai-main-identity}. Write
\[
\beta_j^{(i)}
:=
\frac{\eta_i-\eta_j}{\eta_i+\eta_j},
\ \
B^{(i)}=\operatorname{diag}\bigl(\beta_1^{(i)},\cdots,\beta_N^{(i)}\bigr).
\]
For any \(N\times N\) matrix \(K\) and any subset
\(I\subset\{1,\cdots,N\}\), let \(K[I]\) denote the principal submatrix of
\(K\) indexed by \(I\), where the elements of \(I\) are always taken in their
natural increasing order. We also use the convention
\(\det K[\emptyset]=1\). Then we have the standard principal minor expansion (see, e.g., \cite[(1.2.13)]{HornJohnson2013}):
\[
\det(I+K)=\sum_{I\subset\{1,\cdots,N\}}\det K[I].
\]
Applying this identity to \(K=B^{(i)}M\), and using the fact that
\(B^{(i)}\) is diagonal, we obtain that
\begin{equation}\label{eq:BiM-principal-expansion}
	\det\bigl(I+B^{(i)}M\bigr)
	=
	\sum_{I\subset\{1,\cdots,N\}\setminus\{i\}}
	\left(\prod_{j\in I}\beta_j^{(i)}\right)\det M[I].
\end{equation}
Indeed, all terms with \(i\in I\) vanish because \(\beta_i^{(i)}=0\).
Since
\[
M_{kj}=\frac{v_kv_j}{\eta_k+\eta_j},
\]
we have
\[
M[I]
=
\operatorname{diag}(v_j)_{j\in I}
\left(\frac{1}{\eta_k+\eta_j}\right)_{k,j\in I}
\operatorname{diag}(v_j)_{j\in I}.
\]
Hence
\begin{equation}\label{eq:MI-cauchy-form}
	\det M[I]
	=
	\left(\prod_{j\in I}v_j^2\right)
	\det\left(\frac{1}{\eta_k+\eta_j}\right)_{k,j\in I}.
\end{equation}
Combining \eqref{eq:BiM-principal-expansion} and
\eqref{eq:MI-cauchy-form}, we obtain that
\begin{equation}\label{eq:BiM-expansion}
	\det\bigl(I+B^{(i)}M\bigr)
	=
	\sum_{I\subset\{1,\cdots,N\}\setminus\{i\}}
	\left(\prod_{j\in I}\beta_j^{(i)}\right)
	\left(\prod_{j\in I}v_j^2\right)
	\det\left(\frac{1}{\eta_k+\eta_j}\right)_{k,j\in I}.
\end{equation}

We next expand \(\det A_i\) by the multilinearity of the determinant with
respect to the columns. For each \(j\neq i\), the \(j\)-th column of \(A_i\)
is
\[
e_j+M_{\cdot j},
\]
where \(e_j\) is the \(j\)-th column of the identity matrix and
\(M_{\cdot j}\) is the \(j\)-th column of \(M\). The \(i\)-th column of
\(A_i\) is \(v\). Thus, in the multilinear expansion,
we choose either \(e_j\) or \(M_{\cdot j}\) for each \(j\neq i\). Denote by
\(I\subset\{1,\cdots,N\}\setminus\{i\}\) the set of indices for which
\(M_{\cdot j}\) is chosen. Then
\begin{equation}\label{eq:Ai-expansion}
	\det A_i
	=
	\sum_{I\subset\{1,\cdots,N\}\setminus\{i\}} \Delta_I,
\end{equation}
where \(\Delta_I\) is the determinant of the matrix whose \(j\)-th column is
\(M_{\cdot j}\) for \(j\in I\), is \(e_j\) for
\(j\notin I\cup\{i\}\), and is \(v\) for \(j=i\).

To compute \(\Delta_I\), set
\[
S:=I\cup\{i\}.
\]
For \(j\notin S\), the \(j\)-th column is the unit vector \(e_j\). Hence this column has only one nonzero entry, namely the entry \(1\) in the \(j\)-th row. Expanding the determinant successively along these unit columns, we may delete all rows and columns with labels \(j\notin S\). Therefore \(\Delta_I\) is equal to the determinant of the matrix obtained by keeping only the rows and columns whose labels belong to \(S\). In this reduced matrix, the column with label \(i\) is \((v_k)_{k\in S}\), while the column with label \(j\in I\) is \((M_{kj})_{k\in S}\).

We then move the row and the column with label \(i\) simultaneously to the
first position. The row permutation and the column permutation have the same
sign, so their signs cancel. Thus the determinant is unchanged. We then obtain that
\begin{equation}\label{eq:DeltaI-block}
	\Delta_I
	=
	\det
	\begin{pmatrix}
		v_i & (M_{ij})_{j\in I}\\
		(v_k)_{k\in I} & (M_{kj})_{k,j\in I}
	\end{pmatrix}.
\end{equation}
Using \(M_{kj}=v_kv_j/(\eta_k+\eta_j)\), we factor \(v_k\) from each row
with label \(k\in S\), and \(v_j\) from each column with label \(j\in I\).
This gives that
\begin{equation}\label{eq:DeltaI-factor}
	\Delta_I
	=
	v_i\left(\prod_{j\in I}v_j^2\right)
	\det
	\begin{pmatrix}
		1 & \left(\frac{1}{\eta_i+\eta_j}\right)_{j\in I}\\
		1 & \left(\frac{1}{\eta_k+\eta_j}\right)_{k,j\in I}
	\end{pmatrix}.
\end{equation}
In the last determinant, subtract the first row from each of the remaining
rows and then expand along the first column. This gives
\[
\det
\begin{pmatrix}
	1 & \left(\frac{1}{\eta_i+\eta_j}\right)_{j\in I}\\
	1 & \left(\frac{1}{\eta_k+\eta_j}\right)_{k,j\in I}
\end{pmatrix}
=
\det
\left(
\frac{1}{\eta_k+\eta_j}
-
\frac{1}{\eta_i+\eta_j}
\right)_{k,j\in I}.
\]
Moreover, the matrix on the right-hand side admits the factorization
\[
\left(
\frac{1}{\eta_k+\eta_j}
-
\frac{1}{\eta_i+\eta_j}
\right)_{k,j\in I}
=
\operatorname{diag}(\eta_i-\eta_k)_{k\in I}
\left(\frac{1}{\eta_k+\eta_j}\right)_{k,j\in I}
\operatorname{diag}\left(\frac{1}{\eta_i+\eta_j}\right)_{j\in I}.
\]
We thus conclude from above that
\begin{equation}\label{eq:DeltaI-final}
\Delta_I
=
v_i
\left(\prod_{j\in I}
\frac{\eta_i-\eta_j}{\eta_i+\eta_j}
\right)
\left(\prod_{j\in I}v_j^2\right)
\det\left(\frac{1}{\eta_k+\eta_j}\right)_{k,j\in I}.
\end{equation}

Substituting \eqref{eq:DeltaI-final} into \eqref{eq:Ai-expansion}, we get that
\begin{equation*}
\det A_i
=
\sum_{I\subset\{1,\cdots,N\}\setminus\{i\}}
v_i\left(\prod_{j\in I}\beta_j^{(i)}\right)
\left(\prod_{j\in I}v_j^2\right)
\det\left(\frac{1}{\eta_k+\eta_j}\right)_{k,j\in I}.
\end{equation*}
Combining this with \eqref{eq:BiM-expansion} gives
\eqref{eq:Ai-main-identity}. Finally, since \(f=\det A\), it follows from \eqref{eq:q-cramer}  and
\eqref{eq:Ai-main-identity} that
\[
q_i(x)
=
\frac{v_i(x)\det(I+B^{(i)}M(x))}{f(x)}.
\]
Since \(a_i e^{\eta_i x}=\sqrt{2\eta_i}\,v_i(x)\), the desired identity follows.
This completes the proof.
\end{proof}

\subsection{Verification of the determinant family }
In this Subsection we shall complete the proof of Proposition~\ref{prop:determinant-verification}.
By Lemma~\ref{lem:ratio-formula}, the determinant ratios are reduced to
the vector
\[
q=(I+M)^{-1}v,
\ \
u_i=\sqrt{2\eta_i}\,q_i .
\]
It remains to show that \(q\) satisfies the closed second-order system
equivalent to \eqref{FM}.
\begin{proof}[\normalfont\bfseries Proof of Proposition~\ref{prop:determinant-verification}]
By Lemma~\ref{lem:ratio-formula}, the determinant solution satisfies
\[
u_i=\sqrt{2\eta_i}\,q_i.
\]
Thus the proof reduces to deriving a closed equation for \(q\).
Let
\[
\Lambda=\operatorname{diag}(\eta_1,\dots,\eta_N).
\]
By \eqref{vvect} and \eqref{MatrixM}, we have
\[
m_{jk}(x)=\frac{v_j(x)v_k(x)}{\eta_j+\eta_k},
\]
and hence
\[
M'(x)=v(x)v(x)^T,
\ \
v'(x)=\Lambda v(x).
\]
Recall that
\[
A(x)=I+M(x)\quad\hbox{and}\quad q(x)=A(x)^{-1}v(x).
\]
Set
\[
s:=v^\top q,\ \  r:=q^\top\Lambda q.
\]
To prove  Proposition~\ref{prop:determinant-verification}, it suffices to show that
\begin{equation}\label{key2.1}
Aq''=A(\Lambda^2q-4rq)
\end{equation}
holds true.
Indeed, since $A=I+M$ is invertible, if \eqref{key2.1} holds true, then we have
\begin{equation}\label{qsecond-final-thm}
q''=\Lambda^2q-4rq.
\end{equation}
Since \(\mu_i=-\eta_i^2\) and
\[
\sum_{j=1}^N u_j^2
=
2\sum_{j=1}^N\eta_jq_j^2=2r,
\]
the equation
\[
q_i''-\eta_i^2q_i+4rq_i=0
\]
is exactly equivalent to \eqref{FM}.
Thus, $u=(u_1,u_2,\cdots,u_N)=(\sqrt{2\eta_1}q_1,\sqrt{2\eta_2}q_2,\cdots,\sqrt{2\eta_N}q_N)$ satisfies \eqref{FM}.

We next prove \eqref{key2.1} as follows.

\medskip
\noindent
\textbf{Step 1. Computation of $Aq''$.}
We claim that
\begin{equation}\label{qsecond2-thm}
Aq''=\Lambda^2v-s\Lambda v-3rv+\frac{s^2}{2}v.
\end{equation}

Indeed, since
\[
Aq=v,
\]
we differentiate both sides of the identity   to obtain that
\[
Aq'+A'q=v'.
\]
Since $A'=vv^\top$ and $v'=\Lambda v$, while
\[
A'q=vv^\top q=v(v^\top q)=(v^\top q)v=sv,
\]
we obtain that
\begin{equation}\label{qprime-eq-thm}
Aq'=(\Lambda-sI)v.
\end{equation}
Differentiating \eqref{qprime-eq-thm} yields that
\[
Aq''+A'q'
=
(\Lambda-sI)'v+(\Lambda-sI)v'
=
-s'v+(\Lambda-sI)\Lambda v.
\]
Since $A'=vv^\top$, this becomes
\begin{equation}\label{qsecond1-thm}
Aq''+v(v^\top q')
=
\Lambda^2v-s\Lambda v-s'v.
\end{equation}

Next, we compute $v^\top q'$ and $s'$. Since
\[v=Aq=(I+M)q=q+Mq,\]
 we have
\[
q^\top\Lambda v
=
q^\top\Lambda q+q^\top\Lambda Mq
=
r+q^\top\Lambda Mq.
\]
Using \(m_{jk}=v_jv_k/(\eta_j+\eta_k)\) and the fact that
the scalar
\(q^\top\Lambda Mq\) satisfies
\[
q^\top\Lambda Mq=(q^\top\Lambda Mq)^\top=q^\top M\Lambda q,
\]
we get
\[
2q^\top\Lambda Mq
=
q^\top(\Lambda M+M\Lambda)q
=
\sum_{j,k=1}^N v_jq_jv_kq_k
=
s^2.
\]
Hence
\begin{equation}\label{def:qLq}
	q^\top\Lambda v
	=
	r+\frac{s^2}{2}.
\end{equation}
Multiplying \eqref{qprime-eq-thm} by \(q^\top\) from the left and
using \(q^\top A=v^\top\), we obtain that
\[
v^\top q'
=
q^\top(\Lambda-sI)v
=
q^\top\Lambda v-s^2.
\]
By \eqref{def:qLq}, this gives that
\begin{equation}\label{vtqprime-thm}
	v^\top q'
	=
	r-\frac{s^2}{2}.
\end{equation}

Finally, differentiating \(s=v^\top q\), we have
\[
s'
=
(v^\top q)'
=
(v')^\top q+v^\top q'
=v^\top\Lambda q +v^\top q'
=q^\top\Lambda v+v^\top q'.
\]
We get from \eqref{def:qLq} and \eqref{vtqprime-thm} that
\begin{equation}\label{sprime2-thm}
	s'=2r.
\end{equation}
Substituting \eqref{sprime2-thm} and \eqref{vtqprime-thm} into
\eqref{qsecond1-thm}, it yields that
\[
Aq''
=
\Lambda^2v-s\Lambda v-s'v-v(v^\top q')
=
\Lambda^2v-s\Lambda v-3rv+\frac{s^2}{2}v,
\]
which proves \eqref{qsecond2-thm}.

\medskip
\noindent
\textbf{Step 2: Computation of $A(\Lambda^2q-4rq)$.}
We claim that
\begin{equation}\label{def:cla1}
A(\Lambda^2q-4rq)
=
\Lambda^2v-s\Lambda v-3rv+\frac{s^2}{2}v.	
\end{equation}

Indeed, since $Aq=v$,
\[
A(\Lambda^2q-4rq)=A\Lambda^2q-4rv,
\]
 and $A=I+M$, we have
\[
\begin{aligned}
A\Lambda^2q=\Lambda^2q+M\Lambda^2q&=\Lambda^2(v-Mq)+M\Lambda^2q\\
&=\Lambda^2v-(\Lambda^2M-M\Lambda^2)q
\end{aligned}
\]
For the commutator term, its $i$-th component is
\[
\bigl((\Lambda^2M-M\Lambda^2)q\bigr)_i
=
\sum_{k=1}^N
(\eta_i^2-\eta_k^2)\frac{v_iv_k}{\eta_i+\eta_k}q_k
=
\sum_{k=1}^N
(\eta_i-\eta_k)v_iv_kq_k.
\]
Hence
\[
\bigl((\Lambda^2M-M\Lambda^2)q\bigr)_i
=
\eta_i s\,v_i-v_i\sum_{k=1}^N \eta_k v_k q_k.
\]
Since
\[
\sum_{k=1}^N \eta_k v_k q_k=q^\top \Lambda v=r+\frac{s^2}{2},
\]
we conclude that
\[
(\Lambda^2M-M\Lambda^2)q
=
s\Lambda v-\left(r+\frac{s^2}{2}\right)v.
\]
Thus,
\[
A\Lambda^2q
=
\Lambda^2v-s\Lambda v+\left(r+\frac{s^2}{2}\right)v,
\]
and hence \eqref{def:cla1} holds true.

We now conclude from \eqref{qsecond2-thm} and \eqref{def:cla1} that \eqref{key2.1} holds true. This completes the proof.
\end{proof}

\section{Solutions for the Distinct-Parameter Case}
In this section, we prove   Theorems~\ref{thm1.1} and \ref{prop:nondegeneracy} on  the classification and nondegeneracy results
in the case where the spectral parameters are distinct.  Our proofs are given  by  comparing solutions through their leading asymptotic
data at \( -\infty \).  We first show that every
\(H^1\)-solution has a well-defined leading coefficient at \(-\infty\).
These coefficients are then   used as the parameters of the determinant
solution constructed in Section~2.  A left-uniqueness principle finally
shows that two solutions with the same leading left asymptotics must
coincide with each other.

\subsection{Left asymptotics and asymptotic uniqueness}
In this subsection, we mainly prove the following scalar ODE lemma to identify the leading asymptotic
behavior of each component \(u_i\) of a solution
\(u=(u_1,\cdots,u_N)\) for \eqref{FM} as \(x\to-\infty\).

\begin{lemma}
\label{lem:L1-asymptotic}
Let \(\eta>0\),
\(q\in C((-\infty,R))\cap L^1(-\infty,R)\), and
\(h\in C((-\infty,R))\). Suppose
\(
y\in H^1(-\infty,R)\cap C^2(-\infty,R)
\) solves
\begin{equation}\label{eq:L1-linear-ode}
	y''-\eta^2y=q(x)y+h(x)
	\quad\text{on }(-\infty,R),
\end{equation}
and assume that there exists \(\sigma>0\) such that
\[
h(x)=O(e^{(\eta+\sigma)x})\quad \text{as }x\to-\infty.
\]
Then there exists a constant \(a\in\mathbb R\) such that
\[
y(x)=a e^{\eta x}+o(e^{\eta x}),
\ \
y'(x)=\eta a e^{\eta x}+o(e^{\eta x})\quad \text{as }x\to-\infty.
\]
Moreover, if \(h\equiv0\) and \(a=0\), then
\[
y\equiv0
\quad\text{on }(-\infty,R).
\]
\end{lemma}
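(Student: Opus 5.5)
We treat \eqref{eq:L1-linear-ode} as a perturbation of $y''-\eta^2y=0$ and use variation of constants with the fundamental pair $e^{\eta x},e^{-\eta x}$. Set $F(x):=q(x)y(x)+h(x)$. First, since $y\in H^1(-\infty,R)$, $y$ is bounded and $y(x)\to0$ as $x\to-\infty$; hence $qy\in L^1(-\infty,R)$, and $F$ is integrable near $-\infty$. Fix $x_0<R$. For $x<x_0$ we write
\[
y(x)=c_1e^{\eta x}+c_2e^{-\eta x}+\frac1{2\eta}\int_{x}^{x_0}\bigl(e^{\eta(t-x)}-e^{\eta(x-t)}\bigr)\,(-F(t))\,dt ,
\]
or equivalently use the integral form anchored at $-\infty$. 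The condition $y\to0$ at $-\infty$ (from $H^1$) rules out the growing mode $e^{-\eta x}$. Concretely, the coefficient of $e^{-\eta x}$ must equal $c_2-\frac1{2\eta}\int_{-\infty}^{x_0}e^{\eta t}F\,dt$, and this quantity must vanish. This leads to the representation
\[
y(x)=e^{\eta x}\Bigl(a-\frac1{2\eta}\int_x^{x_0}e^{-\eta t}F(t)\,dt\Bigr)\ \text{(suitably normalized)}\ -\frac{e^{-\eta x}}{2\eta}\int_{-\infty}^{x}e^{\eta t}F(t)\,dt .
\]

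The key step is a bootstrap to show $y=O(e^{\eta x})$. Set $w(x):=e^{-\eta x}|y(x)|$. From the representation, $w(x)\le C+\frac1{\eta}\int_x^{x_0}|q(t)|w(t)\,dt+$ (contribution from $h$). The $h$-term is bounded because $\int e^{-\eta t}|h|\,dt$ converges thanks to $h=O(e^{(\eta+\sigma)t})$. For the term $e^{-\eta x}\int_{-\infty}^x e^{\eta t}|q|\,|y|\,dt$, note that $e^{\eta t}|y(t)|=e^{2\eta t}w(t)$, so after multiplying by $e^{-\eta x}$ we get at most $\int_{-\infty}^x e^{2\eta(t-x)}|q|w\,dt\le\int_{-\infty}^x|q|w$. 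Thus $w(x)\le C+\frac1\eta\int_{-\infty}^{x_0}|q|\,w$. I expect this to be the main obstacle, since a priori $w$ may be unbounded and the integral could diverge. To handle it, I would work on $[x,x_0]$ with $\sup_{[x,x_0]}w$, choose $x_0$ so negative that $\frac1\eta\int_{-\infty}^{x_0}|q|<\frac12$, and absorb the integral. The issue of an a priori finite sup is settled because, for each fixed $x$, the function is continuous on the compact interval $[x,x_0]$, while the tail $\int_{-\infty}^x$ of the second term is controlled using only $|y|\le C$ in the first pass. Alternatively, a Gronwall argument on $\max_{[x,x_0]}w$ should work. Either way we obtain $w$ bounded.

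Once $w$ is bounded, $e^{-\eta t}F(t)=O(|q|+e^{\sigma t})\in L^1$. So $a:=\lim_{x\to-\infty}e^{-\eta x}y(x)$ exists, because the integral $\int_x^{x_0}$ converges, and the $e^{-\eta x}\int_{-\infty}^x e^{\eta t}F$ term is $o(e^{\eta x})$: it is bounded by $e^{\eta x}\int_{-\infty}^x|e^{-\eta t}F|\to0$ times $e^{\eta x}$. Differentiating the representation gives the same statement for $y'$ with leading coefficient $\eta a$. For uniqueness, suppose $h\equiv0$ and $a=0$. The representation then becomes $e^{-\eta x}y(x)=-\frac1{2\eta}\int_{-\infty}^x\bigl(e^{-\eta(x-t)}\cdot\text{stuff}\bigr)$, and it yields $w(x)\le\frac1\eta\int_{-\infty}^x|q|w$. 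Taking $x_1$ with $\frac1\eta\int_{-\infty}^{x_1}|q|<1$ and $\sup_{(-\infty,x_1]}w$ (finite by the previous step) forces $w\equiv0$ on $(-\infty,x_1]$. Standard ODE uniqueness for the linear equation then gives $y\equiv0$ on $(-\infty,R)$.
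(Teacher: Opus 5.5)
Your overall route (treat $qy+h$ as a forcing for $y''-\eta^2y$, kill the $e^{-\eta x}$ mode using $y\to0$, then read off $a=\lim_{x\to-\infty}e^{-\eta x}y(x)$) is sound. Your final representation is correct, although the first variation-of-constants display has the sign of $F$ flipped. The steps after ``$w$ is bounded'' also go through: existence of $a$, the $y'$ asymptotics, and uniqueness from the Volterra inequality for $w$ when $h\equiv0$, $a=0$.

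The gap is exactly the step you flag: your justification that $w$ is bounded does not close. The inequality you derive,
\[
w(s)\le C_1+\frac1{2\eta}\int_s^{x_0}|q|\,w\,dt+\frac1{2\eta}\int_{-\infty}^{s}e^{2\eta(t-s)}|q(t)|\,w(t)\,dt ,
\]
is two-sided: the last term sees $w$ to the left of $s$. So it is not a Volterra inequality, and neither the compact-interval supremum nor Gronwall handles it. Concretely, set $W(x)=\sup_{[x,x_0]}w$ and estimate the tail $t<x$ with $|y|\le C$. At $s=x$ the tail then contributes $\frac{C}{2\eta}e^{-\eta x}\int_{-\infty}^{x}e^{\eta(t-x)}|q(t)|\,dt$. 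This is $o(e^{-\eta x})$ but unbounded for a general $q\in L^1$. One pass therefore only gives $W(x)\le C'+o(e^{-\eta x})$, i.e.\ $y=o(1)$, which you already knew.

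The fix is to iterate with a two-term ansatz. Let $\epsilon_0:=\frac1\eta\int_{-\infty}^{x_0}|q|<1$. Each of the two integral operators maps $1$ to at most $\epsilon_0/2$, and maps $e^{-\eta t}$ to at most $\frac{\epsilon_0}{2}e^{-\eta s}$. For the second operator this uses $e^{2\eta(t-s)}e^{-\eta t}=e^{-\eta s}e^{\eta(t-s)}\le e^{-\eta s}$ for $t\le s$. Hence $w(t)\le A+Be^{-\eta t}$ on $(-\infty,x_0]$ implies $w(s)\le (C_1+\epsilon_0A)+\epsilon_0Be^{-\eta s}$. Start from $(A,B)=(0,\sup|y|)$ and iterate: for every $n$,
\[
w(s)\le C_1(1+\epsilon_0+\dots+\epsilon_0^{n-1})+\epsilon_0^n\sup|y|\,e^{-\eta s},
\]
so $w\le C_1/(1-\epsilon_0)$.

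With this repair your proof is a valid alternative to the paper's. The paper avoids any a priori bound on the given solution:
\begin{itemize}
\item it builds $\varphi=e^{\eta x}(1+o(1))$ by a contraction for a one-sided Volterra equation in $C_b$;
\item it gets $\psi\sim e^{-\eta x}$ by reduction of order;
\item it uses variation of constants only for $h$.
\end{itemize}
Then $y=\alpha\varphi+\beta\psi+p$, the $L^2$ condition forces $\beta=0$, and $y=O(e^{\eta x})$ comes for free. Your version works only with the explicit kernel $e^{\pm\eta x}$, at the price of the bootstrap above.
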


Lemma \ref{lem:L1-asymptotic} is applied separately to each component, in order to extract
its leading coefficient at \(-\infty\). More precisely, for an
\(H^1\)-solution of
\[
y''-\eta^2y=q(x)y+h(x),
\ \
q\in L^1(-\infty,R),
\ \
h(x)=O(e^{(\eta+\sigma)x})\quad \text{as }\ x\to-\infty,
\]
Lemma \ref{lem:L1-asymptotic} gives a well-defined leading coefficient \(a\) in the
asymptotic expansion
\[
y(x)=a e^{\eta x}+o(e^{\eta x}),
\ \
y'(x)=\eta a e^{\eta x}+o(e^{\eta x})
\quad \text{as }\ x\to-\infty.
\]
In the homogeneous case \(h\equiv0\), this coefficient can vanish only
for the trivial solution. This is the basic tool of extracting the
left asymptotic coefficients \(a_i^-\) of the components \(u_i\).

\vspace {.05cm}

\mbox{\bf Proof of Lemma \ref{lem:L1-asymptotic}.}
The proof is divided into three steps. Firstly, we construct a fundamental system
\(\{\varphi,\psi\}\) for the homogeneous equation
\begin{equation}\label{eq:homoequ}
w''-\eta^2w=q(x)w,
\end{equation}
such that \(\varphi\) has the decaying asymptotic behavior \(e^{\eta x}\), while
\(\psi\) has the growing asymptotic behavior \(e^{-\eta x}\) as
\(x\to-\infty\). Secondly, we use variation of constants to write
\[
y=\alpha\varphi+\beta\psi+p,
\]
where \(p\) is a particular solution produced by \(h\). Finally, the decay
assumption on \(h\) implies that \(p\) and \(p'\) are both of lower order than
\(e^{\eta x}\). Therefore, the condition \(y\in H^1(-\infty,R)\) excludes the
growing mode \(\psi\) and the desired asymptotic estimate is obtained.
	
\medskip
\noindent{\em Step 1:  Construction of two fundamental solutions for the homogeneous equation \eqref{eq:homoequ}.}

Choose \(R_0<R\) sufficiently negative so that
\begin{equation}\label{eq:q-small-tail}
	\frac{1}{2\eta}\|q\|_{L^1(-\infty,R_0)}<1 .
\end{equation}
We first construct a decaying solution of the homogeneous equation
\eqref{eq:homoequ} in $(-\infty,R_0)$. To obtain a solution with the leading behavior
\(e^{\eta x}\) as \(x\to-\infty\), we write
\(
\varphi(x)=e^{\eta x}u(x).
\)
Then $\varphi$ solves \eqref{eq:homoequ} with the  leading behavior
\(e^{\eta x}\) as \(x\to-\infty\), if
\begin{equation}\label{diffequ}
u''+2\eta u'=q(x)u,
\ \
u(x)\to1,\quad u'(x)\to0
\quad\text{as }x\to-\infty .	
\end{equation}
For a bounded continuous solution \(u\), $u$ solves \eqref{diffequ} if
$u$ satisfies the following integral equation:
 \begin{equation}\label{eq:volterra-u}
u(x) = 1+ \frac{1}{2\eta} \int_{-\infty}^{x} \bigl(1-e^{-2\eta(x-s)}\bigr)q(s)u(s)\,ds, \ \  x<R_0 . \end{equation}
Indeed, if \(u\) satisfies \eqref{eq:volterra-u}, then \(qu\in L^1(-\infty,R_0)\), and hence the integral term tends to zero as \(x\to-\infty\).
Thus, \(u(x)\to1\) as \(x\to-\infty\).
Differentiating \eqref{eq:volterra-u} gives that
\[ u'(x) = \int_{-\infty}^{x} e^{-2\eta(x-s)}q(s)u(s)\,ds, \]
which implies that \(u'(x)\to0\) as \(x\to-\infty\). Differentiating once more yields that \[ u''=q(x)u-2\eta u', \] and hence \(u''+2\eta u'=q(x)u\).

We next prove the existence of a solution to the integral equation
\eqref{eq:volterra-u}. Set \(X=C_b((-\infty,R_0])\), equipped with the supremum norm. The
right-hand side of \eqref{eq:volterra-u} defines a map \(T:X\to X\).
Moreover, for any \(u_1,u_2\in X\),
\[
\|Tu_1-Tu_2\|_{L^\infty(-\infty,R_0)}
\le
\frac{1}{2\eta}\|q\|_{L^1(-\infty,R_0)}
\|u_1-u_2\|_{L^\infty(-\infty,R_0)} .
\]
By \eqref{eq:q-small-tail}, \(T\) is a contraction, which implies that
\eqref{eq:volterra-u} has a unique bounded continuous solution
\(u\in C_b((-\infty,R_0])\).
By the argument following \eqref{eq:volterra-u}, this solution \(u\) satisfies \eqref{diffequ}. Applying the standard one-dimensional regularity of ODE, we have
\(u\in C^2(-\infty,R_0)\).
Thus,
\(\varphi=e^{\eta x}u\) is a \(C^2\)-solution of the homogeneous equation
\eqref{eq:homoequ} and satisfies
\begin{equation}\label{eq:phi-asymptotic}
	\varphi(x)=e^{\eta x}(1+o(1)),
	\ \
	\varphi'(x)=\eta e^{\eta x}(1+o(1))\quad \text{as }x\to-\infty.
\end{equation}
In view of \eqref{eq:phi-asymptotic}, by decreasing \(R_0\) further if necessary,
we may assume that
\[
\varphi(x)\neq0
\quad\text{on }(-\infty,R_0).
\]

We next construct the growing homogeneous solution by reduction of order:
\begin{equation}\label{eq:psi-definition}
	\psi(x)
	:=
	2\eta\,\varphi(x)\int_x^{R_0}\frac{dt}{\varphi(t)^2},
	\ \  x<R_0 .
\end{equation}
Then \(\psi\) also solves the homogeneous equation. Moreover,
\[
W(\varphi,\psi)
:=
\varphi\psi'-\varphi'\psi
=
-2\eta .
\]
Thus \(\varphi\) and \(\psi\) form a fundamental system on
\((-\infty,R_0)\). Note from \eqref{eq:phi-asymptotic} that
\[
\frac{1}{\varphi(t)^2}
=
e^{-2\eta t}(1+o(1))\quad \text{as }t\to-\infty.
\]
Hence
\[
\int_x^{R_0}\frac{dt}{\varphi(t)^2}
=
\frac{1}{2\eta}e^{-2\eta x}(1+o(1))\quad \text{as }x\to-\infty.
\]
It follows from \eqref{eq:psi-definition} that
\[
\psi(x)=e^{-\eta x}(1+o(1))\quad \text{as }x\to-\infty.
\]
Differentiating \eqref{eq:psi-definition}, we get that
\[
\psi'
=
2\eta\varphi'\int_x^{R_0}\frac{dt}{\varphi(t)^2}
-
\frac{2\eta}{\varphi}.
\]
Using \eqref{eq:phi-asymptotic} again, we obtain that
\begin{equation}\label{eq:psi-asymptotic}
	\psi(x)=e^{-\eta x}(1+o(1)),
	\ \
	\psi'(x)=-\eta e^{-\eta x}(1+o(1))\quad \text{as }x\to-\infty.
\end{equation}

\medskip
\noindent{\em Step 2: A variation-of-constants representation for \eqref{eq:L1-linear-ode}.}
	
We now treat the inhomogeneous term. Define
\[
I_1(x):=\int_{-\infty}^{x}\psi(s)h(s)\,ds,
\ \
I_2(x):=\int_{-\infty}^{x}\varphi(s)h(s)\,ds .
\]
These integrals are well defined. Indeed, by \eqref{eq:phi-asymptotic},
\eqref{eq:psi-asymptotic}, and the assumption on \(h\),
\[
\psi(s)h(s)=O(e^{\sigma s}),
\ \
\varphi(s)h(s)=O(e^{(2\eta+\sigma)s})\quad \text{as }s\to-\infty.
\]
Set
\begin{equation}\label{eq:p-definition}
	p(x)
	:=
	\frac{1}{2\eta}\varphi(x)I_1(x)
	-
	\frac{1}{2\eta}\psi(x)I_2(x).
\end{equation}
Since
\[
I_1'=\psi h,
\ \
I_2'=\varphi h,
\ \
W(\varphi,\psi)=-2\eta,
\]
a direct computation gives that
\[
p''-(\eta^2+q(x))p=h(x).
\]
Thus, \(p\) is a particular solution of \eqref{eq:L1-linear-ode}. Hence,
\(y-p\) solves the homogeneous equation, and there exist constants
\(\alpha,\beta\in\mathbb R\) such that
\begin{equation}\label{eq:y-decomposition}
	y(x)=\alpha\varphi(x)+\beta\psi(x)+p(x),
	\ \  x<R_0 .
\end{equation}

\medskip
\noindent{\em Step 3: The asymptotic estimates for \(H^1(-\infty,R)\) solutions.}

The above estimates yield that
\[
I_1(x)=O(e^{\sigma x}),
\ \
I_2(x)=O(e^{(2\eta+\sigma)x})\quad \text{as }x\to-\infty,
\]
which imply that
\[
\varphi(x)I_1(x)=O(e^{(\eta+\sigma)x}),
\ \
\psi(x)I_2(x)=O(e^{(\eta+\sigma)x})\quad \text{as }x\to-\infty.
\]
Consequently,
\begin{equation}\label{eq:p-estimate}
	p(x)=O(e^{(\eta+\sigma)x})=o(e^{\eta x})\quad \text{as }x\to-\infty.
\end{equation}
Differentiating \eqref{eq:p-definition}, since  the two terms involving \(h(x)\)
cancel,  we have
\[
p'(x)
=
\frac{1}{2\eta}\varphi'(x)I_1(x)
-
\frac{1}{2\eta}\psi'(x)I_2(x).
\]
By the estimates
of \(I_1,I_2\), we get from   \eqref{eq:phi-asymptotic} and \eqref{eq:psi-asymptotic} that
\begin{equation}\label{eq:p-prime-estimate}
	p'(x)=O(e^{(\eta+\sigma)x})=o(e^{\eta x})\quad \text{as }x\to-\infty.
\end{equation}

Applying \eqref{eq:phi-asymptotic}, \eqref{eq:psi-asymptotic}, and
\eqref{eq:p-estimate}, we get from \eqref{eq:y-decomposition} that
\[
y(x)
=
\alpha e^{\eta x}(1+o(1))
+
\beta e^{-\eta x}(1+o(1))
+
o(e^{\eta x})\quad \text{as }\ x\to-\infty.
\]
Since \(y\in H^1(-\infty,R)\), we have \(y\in L^2(-\infty,R)\).
The function \(e^{-\eta x}\) is not in \(L^2(-\infty,R_0)\), whereas
\[
\varphi\in L^2(-\infty,R_0),
\ \
p=o(e^{\eta x})\quad \text{as }\ x\to-\infty.
\]
Thus \(\beta=0\) and
\[
y(x)=\alpha e^{\eta x}+o(e^{\eta x})\quad \text{as }\ x\to-\infty.
\]
Applying \eqref{eq:y-decomposition} with \(\beta=0\), we obtain from
\eqref{eq:phi-asymptotic} and \eqref{eq:p-prime-estimate} that
\[
y'(x)
=
\alpha\varphi'(x)+p'(x)
=
\eta\alpha e^{\eta x}+o(e^{\eta x})\quad \text{as }\ x\to-\infty.
\]
Taking \(a:=\alpha\), we obtain the desired asymptotics.

It remains  to prove the last assertion in the homogeneous case. Assume
\(h\equiv0\), so that
\[
I_1=I_2\equiv0,
\ \
p\equiv0.
\]
Hence
\[
y(x)=\alpha\varphi(x)+\beta\psi(x),
\ \  x<R_0 .
\]
As before, it follows from \(y\in L^2(-\infty,R_0)\)   that \(\beta=0\). Additionally, if
\(a=0\), then \(\alpha=0\), and hence
\[
y\equiv0
\quad\text{on }(-\infty,R_0).
\]
Since \(y\) solves a second-order linear ordinary differential equation with
continuous coefficients, the uniqueness of Cauchy problem implies that
\[
y\equiv0
\quad\text{on }(-\infty,R).
\]
The proof is complete.
\qed

Applying the preceding scalar lemma to each component, we obtain the following left asymptotic behavior for \(H^1(\mathbb R)^N\)-solutions of \eqref{FM}.

\begin{lemma}
\label{lem:left-asymptotic}
For \(\mu_i=-\eta_i^2<0\), \(i=1,\cdots,N\), assume
$u=(u_1,\cdots,u_N)\in H^1(\mathbb R)^N$
solves
\[
u_i''+2\left(\sum_{j=1}^N u_j^2\right)u_i
=
\eta_i^2u_i,
\ \  i=1,\cdots,N.
\]
Then for each \(i=1,\cdots,N\), there exists a constant
\(a_i^-\in\mathbb R\) such that
\[
u_i(x)=a_i^- e^{\eta_i x}+o(e^{\eta_i x}),\quad
u_i'(x)=\eta_i a_i^- e^{\eta_i x}+o(e^{\eta_i x})\ \  x\to-\infty.
\]
Moreover, if \(u_i\not\equiv0\), then
$a_i^-\neq0$.

\end{lemma}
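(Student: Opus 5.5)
We apply Lemma~\ref{lem:L1-asymptotic} componentwise, in its homogeneous form. Fix \(i\in\{1,\cdots,N\}\) and an arbitrary \(R\in\mathbb R\), and set
\[
y:=u_i,\qquad \eta:=\eta_i,\qquad q(x):=-2\sum_{j=1}^N u_j^2(x),\qquad h\equiv0 .
\]
The component equation \(u_i''+2(\sum_j u_j^2)u_i=\eta_i^2u_i\) is then exactly \(y''-\eta^2y=q(x)y\) on \((-\infty,R)\), which is \eqref{eq:L1-linear-ode} with \(h\equiv 0\). The decay condition on \(h\) holds trivially for any \(\sigma>0\).

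The hypotheses of Lemma~\ref{lem:L1-asymptotic} are checked as follows.
\begin{itemize}
\item \textbf{Integrability of \(q\).} Since each \(u_j\in H^1(\mathbb R)\subset L^2(\mathbb R)\), we have \(q\in L^1(\mathbb R)\), so in particular \(q\in L^1(-\infty,R)\).
\item \textbf{Continuity of \(q\).} In one dimension \(H^1(\mathbb R)\hookrightarrow C(\mathbb R)\), so \(q\) is continuous.
\item \textbf{Regularity of \(y\).} A priori \(u\) is a weak (distributional) solution. The equation gives \(u_i''=\eta_i^2u_i-2(\sum_ju_j^2)u_i\), whose right-hand side is continuous. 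Hence \(u_i\in C^2(\mathbb R)\) by the usual bootstrap (\(u_i''\) continuous implies \(u_i\in C^2\)). Together with \(y\in H^1(-\infty,R)\), all assumptions are met.
\end{itemize}

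Lemma~\ref{lem:L1-asymptotic} then yields a constant \(a_i^-:=a\) with
\[
u_i(x)=a_i^-e^{\eta_ix}+o(e^{\eta_ix}),\qquad u_i'(x)=\eta_ia_i^-e^{\eta_ix}+o(e^{\eta_ix})
\]
as \(x\to-\infty\). The constant does not depend on \(R\), since the leading coefficient of the expansion at \(-\infty\) is unique.

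For the nonvanishing claim, suppose \(a_i^-=0\). Because \(h\equiv0\), the last assertion of Lemma~\ref{lem:L1-asymptotic} gives \(u_i\equiv0\) on \((-\infty,R)\). Since \(R\) was arbitrary, \(u_i\equiv0\) on \(\mathbb R\); equivalently, one can invoke Cauchy uniqueness for the linear ODE with continuous coefficient \(q\) on all of \(\mathbb R\). This contradicts \(u_i\not\equiv0\), so \(a_i^-\neq0\).

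There is no real obstacle in this argument. The only point needing care is the regularity step, namely that an \(H^1\) solution is classical, so that Lemma~\ref{lem:L1-asymptotic} applies as stated; this follows from the one-dimensional Sobolev embedding.
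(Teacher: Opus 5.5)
Your proposal is correct and follows the paper's proof essentially verbatim. You apply Lemma~\ref{lem:L1-asymptotic} componentwise with $q_i=-2\sum_j u_j^2\in C(\mathbb R)\cap L^1(\mathbb R)$ and $h\equiv0$, get $C^2$ regularity from the one-dimensional embedding, and obtain $a_i^-\neq0$ from the homogeneous vanishing assertion applied with an arbitrary $R$.
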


\begin{proof}
Since \(u\in H^1(\mathbb R)^N\), we have
$2\sum_{j=1}^N u_j^2(x)\in L^1(\mathbb R)$.
For each fixed \(i\), the equation of \(u_i\) can be rewritten as
\[
u_i''-\eta_i^2u_i=q_i(x)u_i,
\ \
q_i(x):=-2\sum_{j=1}^N u_j^2(x)\in C(\mathbb R)\cap L^1(\mathbb R).
\]
By standard one-dimensional regularity of ODE system, we have
\(u\in C^2(\mathbb R)^N\). Applying Lemma~\ref{lem:L1-asymptotic} to \(y=u_i\),
\(\eta=\eta_i\) and \(q=q_i\), we obtain that
\[
u_i(x)=a_i^- e^{\eta_i x}+o(e^{\eta_i x}),\quad
u_i'(x)=\eta_i a_i^- e^{\eta_i x}+o(e^{\eta_i x}),\ \  x\to-\infty.
\]
If \(a_i^-=0\), then the last assertion of
Lemma~\ref{lem:L1-asymptotic} gives that
\[
u_i\equiv0
\quad\text{on }(-\infty,R)
\]
holds
for any fixed \(R\in\mathbb R\). Hence, \(u_i\equiv0\) on
\(\mathbb R\). Therefore, if \(u_i\not\equiv0\), it must have
$a_i^-\neq0$.
\end{proof}

If a
solution of a diagonal second-order system is smaller than its natural
decaying modes at \(-\infty\), and if the source term is quadratically
small on the left, then the following left-uniqueness principle further gives that the solution must vanish on a sufficiently far
left half-line.

\begin{lemma}
\label{lem:zero-left-principle}
Let \(\eta_i>0\), and
set $\eta_*:=\min_{1\le i\le N}\eta_i$,  \(i=1,\cdots,N\).
Assume
\[
z=(z_1,\cdots,z_N)\in H^1(-\infty,R_0)^N
\cap C^2(-\infty,R_0)^N
\]
satisfies  for \(i=1,\cdots,N\),
\[
z_i''-\eta_i^2z_i=F_i(x)
\quad\text{in }(-\infty,R_0),
\ \
z_i(x)=o(e^{\eta_i x})\quad\text{as }x\to-\infty,
\]
where \(F_i\in C((-\infty,R_0))\) denotes the right-hand side associated
with  \(z\). If there exists a constant \(C>0\) such that for  any \(x<R_0\),
\begin{equation}\label{Fibound}
|F_i(x)|
\le
C e^{(\eta_i+2\eta_*)x}
\max_{1\le j\le N} e^{-\eta_jx}|z_j(x)|,
\ \  i=1,\cdots,N,
\end{equation}
then there exists $R<R_0$ such that \(z\equiv0\) on  $(-\infty,R)$.
\end{lemma}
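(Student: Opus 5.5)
We convert each component equation into a Volterra integral equation on a left half-line and show that the weighted quantity
\[
\Theta(x):=\sup_{t\le x}\max_{1\le j\le N}e^{-\eta_j t}|z_j(t)|
\]
satisfies $\Theta(x)\le \tfrac12\Theta(x)$ once $x$ is sufficiently negative. This forces $\Theta\equiv0$ there, hence $z\equiv0$ on a left half-line. The assumption $z_i=o(e^{\eta_i x})$ guarantees that $\Theta(x)$ is finite for $x$ negative enough and tends to $0$ as $x\to-\infty$, since each $e^{-\eta_jt}|z_j(t)|\to0$.

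\textbf{Step 1: integral representation.} For each $i$ we use the same variation-of-constants setup as in Step 2 of the proof of Lemma~\ref{lem:L1-asymptotic}, with $q\equiv0$, so that $\varphi=e^{\eta_i x}$ and $\psi=e^{-\eta_i x}$. By \eqref{Fibound} and the finiteness of $\Theta$,
\[
|F_i(s)|\le C\Theta(x)e^{(\eta_i+2\eta_*)s}\qquad (s\le x),
\]
so $e^{\mp\eta_i s}F_i(s)$ is integrable at $-\infty$. The particular solution
\[
p_i(x)=\frac{1}{2\eta_i}\int_{-\infty}^x\bigl(e^{\eta_i(x-s)}-e^{-\eta_i(x-s)}\bigr)F_i(s)\,ds
\]
is therefore well defined. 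Since $z_i-p_i$ solves the free equation, we can write
\[
z_i=\alpha_ie^{\eta_i x}+\beta_ie^{-\eta_i x}+p_i .
\]
The $L^2$ condition (or the $o(e^{\eta_i x})$ condition) gives $\beta_i=0$. The estimate below gives $p_i=O(e^{(\eta_i+2\eta_*)x})=o(e^{\eta_i x})$. Combined with $z_i=o(e^{\eta_ix})$, this forces $\alpha_i=0$, so $z_i=p_i$.

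\textbf{Step 2: the contraction estimate.} For $t\le x$ we bound
\[
|z_i(t)|\le\frac{1}{\eta_i}\int_{-\infty}^t e^{\eta_i(t-s)}|F_i(s)|\,ds
\le\frac{C\Theta(x)}{\eta_i}e^{\eta_i t}\int_{-\infty}^t e^{2\eta_* s}\,ds
=\frac{C\Theta(x)}{2\eta_i\eta_*}e^{\eta_it}e^{2\eta_* t}.
\]
It follows that
\[
e^{-\eta_i t}|z_i(t)|\le \frac{C}{2\eta_*^2}\,e^{2\eta_* x}\,\Theta(x).
\]
Taking the supremum over $t\le x$ and over $i$ gives
\[
\Theta(x)\le \frac{C}{2\eta_*^2}e^{2\eta_*x}\,\Theta(x).
\]
We choose $R<R_0$ such that $\frac{C}{2\eta_*^2}e^{2\eta_*R}\le\frac12$. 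Then $\Theta(R)=0$, so $z\equiv0$ on $(-\infty,R)$.

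\textbf{Main obstacle.} The delicate point is justifying the representation $z_i=p_i$, i.e.\ eliminating the homogeneous modes, while $\Theta$ is only known to be finite rather than small a priori. This is exactly where the hypothesis $z_i=o(e^{\eta_ix})$ enters, and it must be applied before the bootstrap. Everything else is the routine Gronwall-type bound above, where the crucial extra factor is the gain $e^{2\eta_*s}$ supplied by \eqref{Fibound}.
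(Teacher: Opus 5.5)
Your proposal is correct and follows essentially the same route as the paper: the same $\sinh$-kernel particular solution, elimination of both homogeneous modes via the $H^1$/$o(e^{\eta_i x})$ hypotheses before any bootstrap, and then the weighted-sup estimate $\Theta(R)\le \frac{C}{2\eta_*^2}e^{2\eta_*R}\,\Theta(R)$ (the paper's $W_R\le \Theta e^{2\eta_*R}W_R$), which forces $\Theta(R)=0$ for $R$ sufficiently negative. The only difference is a harmless constant: you bound the kernel by $\frac{1}{\eta_i}e^{\eta_i(t-s)}$ rather than $\frac{1}{2\eta_i}e^{\eta_i(t-s)}$.
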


\begin{proof}
Define for \(R<R_0\),
\[
W_R:=
\max_{1\le j\le N}\sup_{x\le R}
e^{-\eta_jx}|z_j(x)|.
\]
By the assumption \(z_j(x)=o(e^{\eta_jx})\) as \(x\to-\infty\),
we obtain that \(W_R<\infty\).
Fix \(R<R_0\) and define
\[
Y_i(x)
:=
\frac1{\eta_i}
\int_{-\infty}^{x}
\sinh\bigl(\eta_i(x-t)\bigr)F_i(t)\,dt ,
\ \  x\le R.
\]
By the definition of \(W_R\), we obtain from \eqref{Fibound} that
\begin{equation}\label{Fibound2}
|F_i(x)|
\le
CW_R e^{(\eta_i+2\eta_*)x},\,\ x\le R.
\end{equation}
Hence the above integral
is well-defined for \(x\le R\). Moreover, differentiating the above
integration gives that
\[
Y_i''-\eta_i^2Y_i=F_i .
\]
Thus,
\[
h_i:=z_i-Y_i
\]
satisfies the homogeneous equation
\[
h_i''-\eta_i^2h_i=0
\quad\text{on }\ (-\infty,R].
\]
Hence,
\[
h_i(x)=\alpha_i e^{\eta_i x}+\beta_i e^{-\eta_i x}.
\]
Since \(z_i,Y_i\in H^1(-\infty,R)\), we have
\(h_i\in H^1(-\infty,R)\). This excludes the mode \(e^{-\eta_i x}\), and hence \(\beta_i=0\).
On the other hand, we obtain from  \eqref{Fibound2} that
\[
\begin{aligned}
|Y_i(x)|
&\le
\frac{CW_R}{2\eta_i}
\int_{-\infty}^{x}
e^{\eta_i(x-t)}e^{(\eta_i+2\eta_*)t}\,dt  \\
&\le
C W_R e^{\eta_i x}e^{2\eta_*x}
=o(e^{\eta_i x})\quad \text{as }x\to-\infty.
\end{aligned}
\]
By the assumption
\(
z_i(x)=o(e^{\eta_i x})\)
as \(x\to-\infty\),
we then have
\(
h_i(x)=o(e^{\eta_i x})
\) as $x\to-\infty$.
Since \(h_i(x)=\alpha_i e^{\eta_i x}\), we get that \(\alpha_i=0\).

We obtain from above that
\[
z_i(x)=Y_i(x)
=
\frac1{\eta_i}
\int_{-\infty}^{x}
\sinh\bigl(\eta_i(x-t)\bigr)F_i(t)\,dt ,
\ \  x\le R.
\]
Since
\[
\sinh\bigl(\eta_i(x-t)\bigr)
\le
\frac12 e^{\eta_i(x-t)},
\ \  t\le x,
\]
we obtain from \eqref{Fibound2} that
\[
\begin{aligned}
	|z_i(x)|
	&\le
	\frac{C}{2\eta_i}W_R
	\int_{-\infty}^{x}
	e^{\eta_i(x-t)}e^{(\eta_i+2\eta_*)t}\,dt  \\
	&=
	\frac{C}{2\eta_i}W_R e^{\eta_i x}
	\int_{-\infty}^{x}e^{2\eta_*t}\,dt  \\
	&=
	\frac{C}{4\eta_i\eta_*}
	W_R e^{\eta_i x}e^{2\eta_*x}.
\end{aligned}
\]
Since \(\eta_i\ge\eta_*\), there exists a constant \(\Theta:=\frac{C}{4\eta_*^2}>0\),
independent of \(R\), such that
\[
e^{-\eta_i x}|z_i(x)|
\le
\Theta W_R e^{2\eta_*x}
\le
\Theta W_R e^{2\eta_*R},
\ \  x\le R.
\]
Taking the supremum over \(x\le R\) and then the maximum over \(i\),
we get that
\[
W_R\le \Theta e^{2\eta_*R}W_R.
\]
Choosing \(R\ll-1\) such that
\(
\Theta e^{2\eta_*R}<1,
\)
we thus conclude that \(W_R=0\). Hence
\[
z_i(x)=0,
\ \  x\le R,\quad i=1,\cdots,N,
\]
which completes the proof.
\end{proof}

\subsection{Classification in the distinct-parameter case}
In this subsection, we prove   Theorem~\ref{thm1.1} on the classification of solutions for \eqref{FM} in the distinct-parameter case.  For any
\(H^1\)-solution \(u\in(H^1(\R)\setminus\{0\})^N\), roughly speaking, the previous lemmas give its leading coefficients at
\(-\infty\).  We shall use these coefficients as the parameters of the
determinant solution constructed in Section~2, and then apply the
left-uniqueness principle to identify the two solutions.

\begin{proof}[\normalfont\bfseries Proof of  Theorem~\ref{thm1.1}]
Set
\[
\eta_i:=\sqrt{-\mu_i}>0,
\ \  i=1,\cdots,N,
\]
so that the system \eqref{FM} can be rewritten as
\[
u_i''
+
2\left(\sum_{j=1}^N u_j^2\right)u_i
=
\eta_i^2u_i,
\ \  i=1,\cdots,N.
\]
By Lemma~\ref{lem:left-asymptotic}, there exist constants
\(a_i\in\mathbb R\) such that for each \(i=1,\cdots,N\),
\[
u_i(x)=a_i e^{\eta_i x}+o(e^{\eta_i x})\quad \text{as }x\to-\infty.
\]
Since \(u_i\not\equiv0\), the last assertion of
Lemma~\ref{lem:left-asymptotic} gives that
\[
a_i\neq0,\ \  i=1,\cdots,N.
\]
Let \(a=(a_1,\cdots,a_N)\) be the vector of leading coefficients obtained
above, and denote \(U^a=(U_1^a,\cdots,U_N^a)\)   the determinant solution
with parameter \(a\), namely
\[
U_i^a(x)
=
\frac{
	a_i e^{\eta_i x}\det(I+B^{(i)}M(x))
}{
	\det(I+M(x))
},
\ \  i=1,\cdots,N.
\]
By Proposition~\ref{prop:determinant-verification}, \(U^a\) is a
\((H^1(\mathbb R)\setminus\{0\})^N\) solution of the  system \eqref{FM}. Moreover, since
\(M(x)\to0\) as \(x\to-\infty\), we have
\[
U_i^a(x)=a_i e^{\eta_i x}+o(e^{\eta_i x})\quad \text{as }\ x\to-\infty.
\]

It remains to prove that \(u\equiv U^a\). Set
\[
w_i:=u_i-U_i^a,
\ \
S_u:=\sum_{j=1}^N u_j^2,
\ \
S_a:=\sum_{j=1}^N (U_j^a)^2.
\]
Then
\[
w_i''-\eta_i^2w_i
=
-2\left(S_u u_i-S_a U_i^a\right)
=:F_i .
\]
Since \(u\) and \(U^a\) have the same left asymptotic data, we have
\[
w_i(x)=o(e^{\eta_i x})\quad \text{as }x\to-\infty.
\]
Set
\[
\eta_*:=\min_{1\le j\le N}\eta_j .
\]
By the above asymptotics, there exist \(R_0<0\) and \(C_0>0\)
such that for all \(x\le R_0\),
\[
|u_i(x)|+|U_i^a(x)|
\le
C_0 e^{\eta_i x},
\ \  i=1,\cdots,N.
\]
Define
\[
Z(x):=
\max_{1\le j\le N}e^{-\eta_jx}|w_j(x)|, \ \  x\le R_0,
\]
so that
\[
|w_j(x)|\le Z(x)e^{\eta_jx},
\ \  x\le R_0.
\]
Using
\[
S_u u_i-S_a U_i^a
=
S_u w_i+(S_u-S_a)U_i^a,
\ \
S_u-S_a
=
\sum_{j=1}^N (u_j+U_j^a)w_j,
\]
we obtain that
\[
\begin{aligned}
	|F_i(x)|
	&\le
	2S_u(x)|w_i(x)|
	+
	2|U_i^a(x)|
	\sum_{j=1}^N |u_j(x)+U_j^a(x)|\,|w_j(x)|  \\
	&\le
	C e^{(\eta_i+2\eta_*)x}Z(x), \ \  x\le R_0,
\end{aligned}
\]
where \(C>0\) is independent of \(x\). Thus, \(w\) satisfies the
assumptions of Lemma~\ref{lem:zero-left-principle}, which implies that
\[
w\equiv0
\ \ \text{on a sufficiently far left half-line}.
\]
In particular, we have for some \(R<R_0\),
\[
u_i(R)=U_i^a(R),
\ \
u_i'(R)=(U_i^a)'(R),
\ \  i=1,\cdots,N.
\]
By the uniqueness of solutions for ODE systems, we obtain that
\[
u\equiv U^a
\quad\text{on }\ \mathbb R.
\]
This proves Theorem~\ref{thm1.1}.
\end{proof}

\subsection{Nondegeneracy in the distinct-parameter case}
In this subsection, we prove Theorem~\ref{prop:nondegeneracy} on the
nondegeneracy of the determinant family in the distinct-parameter case.
It is clear that the tangent directions obtained by differentiating the
determinant family belong to the kernel. To prove that these are the only \(H^1\)-kernel
directions, we again use the left asymptotics and the left-uniqueness
principle established above.

\begin{proof}[\normalfont\bfseries Proof of Theorem~\ref{prop:nondegeneracy}]
For each \(k=1,\cdots,N\), differentiating the identity
\[
(U_i^a)''
+
2\left(\sum_{j=1}^N (U_j^a)^2\right)U_i^a
=
\eta_i^2U_i^a,
\ \  i=1,\cdots,N,
\]
with respect to \(a_k\), we obtain that
\(
\mathcal L_{U^a}(\partial_{a_k}U^a)=0,
\)
where $\mathcal L_{U^a}$  is given by
\[
(\mathcal L_{U^a}\phi)_i
:=
-\phi_i''
+\eta_i^2\phi_i
-2S^a\phi_i
-4U_i^a\sum_{j=1}^N U_j^a\phi_j,
\ \  i=1,\cdots,N .
\]
Here $S^a=\left(\sum_{j=1}^N (U_j^a)^2\right)$.
Furthermore, direct computations yield that $\partial_{a_k}U^a\in H^1(\R)^N$ holds for $a_k\neq 0$. Thus, we have for every \(a\in(\mathbb R\setminus\{0\})^N\),
\[
\operatorname{span}
\left\{
\partial_{a_1}U^a,\cdots,\partial_{a_N}U^a
\right\}
\subset
\ker_{H^1(\mathbb R)^N}\mathcal L_{U^a} .
\]
Conversely, we have for
$\phi=(\phi_1,\cdots,\phi_N)
\in
\ker_{H^1(\mathbb R)^N}\mathcal L_{U^a}$,
\[
\phi_i''-\eta_i^2\phi_i
=
-2S^a\phi_i
-4U_i^a\sum_{j=1}^N U_j^a\phi_j,
\ \  i=1,\cdots,N.
\]
Since \(U_i^a(x)=O(e^{\eta_i x})\) as \(x\to-\infty\), we obtain that
\(S^a\in L^1(-\infty,R_0)\). Moreover, the one-dimensional Sobolev
embedding gives that
$\phi\in L^\infty(\mathbb R)^N$.

For each \(i\), rewrite the equation as
\[
\phi_i''-\eta_i^2\phi_i
=
-2S^a\phi_i+h_i(x),
\]
where
\(
h_i(x):=-4U_i^a(x)\sum_{j=1}^N U_j^a(x)\phi_j(x).
\)
Set
$
\eta_*:=\min_{1\le j\le N}\eta_j,
$
so that
\[
|h_i(x)|
\le
C |U_i^a(x)|\sum_{j=1}^N |U_j^a(x)|
\le
C e^{(\eta_i+\eta_*)x}\quad \text{as }x\to-\infty.
\]
In particular,
$
h_i(x)=o(e^{\eta_i x})
$ as $x\to-\infty$.
Applying Lemma~\ref{lem:L1-asymptotic}, we obtain that there exists a constant
\(b_i^-\in\mathbb R\) such that
\[
\phi_i(x)=b_i^-e^{\eta_i x}+o(e^{\eta_i x})\quad \text{as }x\to-\infty.
\]
On the other hand, the determinant formula of \(U^a\) gives that
\[
\partial_{a_k}U_i^a(x)
=
\delta_{ik}e^{\eta_i x}+o(e^{\eta_i x})\quad \text{as }\ x\to-\infty.
\]
Define
\[
\psi
:=
\sum_{k=1}^N b_k^-\partial_{a_k}U^a,
\]
so that $\mathcal L_{U^a}\psi=0$
and
\[
\psi_i(x)=b_i^-e^{\eta_i x}+o(e^{\eta_i x})\quad \text{as }\ x\to-\infty.
\]
Hence
\(
z:=\phi-\psi
\)
satisfies
$\mathcal L_{U^a}z=0$,
and
\[
z_i(x)=o(e^{\eta_i x})
\ \ \text{as }\ x\to-\infty, \ \ i=1,\cdots,N.
\]
Moreover, $z(x)$ satisfies
\[
z_i''-\eta_i^2 z_i
=
-2S^a z_i
-4U_i^a\sum_{j=1}^N U_j^a z_j
=:F_i(x).
\]
Set
\[
Z(x):=\max_{1\le j\le N}e^{-\eta_jx}|z_j(x)|, \ \  x\le R_0.
\]
Since \(U_i^a(x)=O(e^{\eta_i x})\) as \(x\to-\infty\), after decreasing
\(R_0\) if necessary, we have
\[
|F_i(x)|
\le
C e^{(\eta_i+2\eta_*)x}Z(x),
\ \  x\le R_0.
\]
Thus, \(z\) satisfies the assumptions of
Lemma~\ref{lem:zero-left-principle}. This gives that there exists \(R<R_0\) such that
\[
z\equiv0 \quad\text{on }\ (-\infty,R).
\]
In particular,
\[
z_i(R)=z_i'(R)=0,\ \  i=1,\cdots,N.
\]
Since \(z\) satisfies the linearized ODE system with continuous
coefficients, the uniqueness theorem of ODE systems
implies that
$z\equiv0$ on $\R$.
Thus,
\(
\phi
=
\sum_{k=1}^N b_k^-\partial_{a_k}U^a,
\)
which proves that
\[
\ker_{H^1(\mathbb R)^N}\mathcal L_{U^a}
\subset
\operatorname{span}
\left\{
\partial_{a_1}U^a,\cdots,\partial_{a_N}U^a
\right\}.
\]
Combining above two inclusions, we obtain that
\[
\ker_{H^1(\mathbb R)^N}\mathcal L_{U^a}
=
\operatorname{span}
\left\{
\partial_{a_1}U^a,\cdots,\partial_{a_N}U^a
\right\}.
\]

Finally, since
\[
\partial_{a_k}U_i^a(x)
=
\delta_{ik}e^{\eta_i x}+o(e^{\eta_i x})\quad \text{as }x\to-\infty,
\]
the vector-valued functions
$
\partial_{a_1}U^a,\cdots,\partial_{a_N}U^a
$
are linearly independent. Hence
$
\dim\ker_{H^1(\mathbb R)^N}\mathcal L_{U^a}=N,
$
which completes the proof of Theorem~\ref{prop:nondegeneracy}.
\end{proof}

\section{Classification and Nondegeneracy with Repeated Spectral Parameters}
\label{sec:multiple-spectral-parameters}
This section is concerned with the classification and nondegeneracy of \(H^1(\mathbb R)^N\)-solutions for \eqref{FM}, in the case where the spectral parameters are allowed to have multiplicities. In Subsection~\ref{sub4.1}, we shall prove  Theorem~\ref{thm:completeness-multiple-root} on the classification of solutions for \eqref{eq:multi-system}. In Subsection~\ref{sub4.2}, we prove the   nondegeneracy of Theorem~\ref{prop:nondegeneracy-multiple-roots}  for the linearized operator \(\mathcal L_{U^{a,n}}\) defined in \eqref{eq:multi-LU}.

Let
\[
\mu_1<\cdots<\mu_K<0
\]
be the distinct spectral parameters, and denote \(k_\alpha\) to be the
multiplicity of \(\mu_\alpha\), so that
\[
k_1+\cdots+k_K=N.
\]
Set
\[
N_0:=0,\ \
N_\alpha:=k_1+\cdots+k_\alpha,
\ \
I_\alpha:=\{N_{\alpha-1}+1,\cdots,N_\alpha\}.
\]
The full list of spectral parameters counted with multiplicity is denoted
by \(\nu_1,\cdots,\nu_N\) and
\[
\nu_i=\mu_\alpha,\ \  i\in I_\alpha.
\]
For $\alpha=1,2,\cdots,K$, we also write
$
\mu_\alpha=-\eta_\alpha^2<0$.

The key observation of   this section lies in the fact that repeated spectral parameters do not produce new
scalar profiles. Instead, the components in the same spectral block are
locked together and differ only by constant coefficients. Hence, each
nontrivial block \(I_\alpha\) reduces to a single scalar profile
\(V_\alpha\), and the collection \(V=(V_1,\cdots,V_K)\) solves the reduced
distinct-parameter system \eqref{eq:reduced-system}. Therefore, the
classification in the repeated-parameter case follows from the
distinct-parameter classification of Theorem~\ref{thm1.1}, together with the
internal rotational freedom inside each block.

\subsection{Classification with repeated spectral parameters}\label{sub4.1}
The purpose of this subsection is to prove
Theorem~\ref{thm:completeness-multiple-root}, namely the complete
classification of \(H^1(\mathbb R)^N\)-solutions for
\eqref{eq:multi-system} with repeated spectral parameters.

\begin{proof}[\normalfont\bfseries Proof of Theorem~\ref{thm:completeness-multiple-root}]
Let
\(
S=\sum_{\ell=1}^N u_\ell^2 ,
\)
and fix \(\alpha\in\{1,\cdots,K\}\). It then yields from \eqref{eq:multi-nu} that for any \(i,j\in I_\alpha\),
\(
\nu_i=\nu_j=\mu_\alpha .
\)
Hence, both \(u_i\) and \(u_j\) solve the same linear equation
\[
y''+2S y+\mu_\alpha y=0.
\]
Define
\[
W_{ij}:=u_i'u_j-u_i u_j',
\]
so that $W_{ij}'
=
u_i''u_j-u_i u_j''
=0$.
Thus, \(W_{ij}\) is constant on \(\mathbb R\). Moreover, since
\(u_i,u_j\in H^1(\mathbb R)\), we have
$
u_i'u_j,\ u_i u_j'\in L^1(\mathbb R),
$
which implies that \(W_{ij}\in L^1(\mathbb R)\). Since a constant function belonging to
\(L^1(\mathbb R)\) must be zero, we have
\[
W_{ij}\equiv0\ \ \hbox{for any }i,j\in I_\alpha.
\]

Since $\sum_{i\in I_\alpha} u_i^2(x)\not\equiv 0$, there exists
\(i_\alpha\in I_\alpha\) such that
$u_{i_\alpha}\not\equiv0$.
For any \(i\in I_\alpha\), choose \(x_\alpha\in\mathbb R\) such that
\(u_{i_\alpha}(x_\alpha)\neq0\), and set
\[
c_i:=\frac{u_i(x_\alpha)}{u_{i_\alpha}(x_\alpha)} .
\]
Define
\[
w_i:=u_i-c_i u_{i_\alpha},
\]
so that \(w_i\) satisfies the same linear equation
\[
w_i''+2S w_i+\mu_\alpha w_i=0 .
\]
Moreover, we have $w_i(x_\alpha)=0$.
Since \(W_{i i_\alpha}\equiv0\), we also have
\[
u_i'(x_\alpha)u_{i_\alpha}(x_\alpha)
-
u_i(x_\alpha)u_{i_\alpha}'(x_\alpha)=0,
\]
which gives that $w_i'(x_\alpha)=0$.
By the uniqueness of   second-order linear ODE, we obtain that
\(
w_i\equiv0.
\)
We thus have
\[
u_i=c_i u_{i_\alpha},
\ \  i\in I_\alpha .
\]

Set
\[
d_\alpha:=\left(\sum_{i\in I_\alpha}c_i^2\right)^{1/2}>0,
\ \
V_\alpha:=d_\alpha u_{i_\alpha},
\ \
n_i^{(\alpha)}:=\frac{c_i}{d_\alpha},
\quad i\in I_\alpha .
\]
Then
\[
n^{(\alpha)}
=
(n_i^{(\alpha)})_{i\in I_\alpha}
\in \mathbb S^{k_\alpha-1},
\]
and
\[
u_i=n_i^{(\alpha)}V_\alpha,
\ \  i\in I_\alpha .
\]
Consequently,
$
\sum_{i\in I_\alpha}u_i^2=V_\alpha^2,
$
and hence
$
S=\sum_{\beta=1}^K V_\beta^2.
$
Substitute $u_i=n_i^{(\alpha)}V_\alpha$
into the equation of \(u_i\), and choose \(i=i_\alpha\) so that
\(n_{i_\alpha}^{(\alpha)}\neq0\). We then obtain that
\[
V_\alpha''
+
2\left(\sum_{\beta=1}^K V_\beta^2\right)V_\alpha
+
\mu_\alpha V_\alpha
=0,
\ \  \alpha=1,\cdots,K.
\]
Thus
$
V=(V_1,\cdots,V_K)
$
is an \(H^1(\mathbb R)^K\) solution of the reduced distinct-root system
\eqref{eq:reduced-system}. Since $\sum_{i\in I_\alpha} u_i^2(x)\not\equiv 0$, we have
\(V_\alpha\not\equiv0\).

Applying Theorem~\ref{thm1.1} to the reduced system
\eqref{eq:reduced-system}, it yields that there exists
$
a=(a_1,\cdots,a_K)\in(\mathbb R\setminus\{0\})^K
$
such that
$
V=V^a .
$
Therefore
\[
u_i=n_i^{(\alpha)}V_\alpha^a,
\ \  i\in I_\alpha,\quad \alpha=1,\cdots,K.
\]
The definition of \(\mathcal M_{\mathrm{multi}}\) in
\eqref{eq:multi-family} then gives that
$
u\in \mathcal M_{\mathrm{multi}},
$
and we are done.
\end{proof}

\subsection{Nondegeneracy  with repeated spectral parameters}\label{sub4.2}
In this subsection, we prove the nondegeneracy of Theorem~\ref{prop:nondegeneracy-multiple-roots}. The argument follows the same reduction principle as in the classification theorem, except that it is now at the linearized level. The component of a kernel element parallel to \(n^{(\alpha)}\) is governed by the reduced \(K\)-component linearized operator, while the orthogonal component inside each block is generated exactly by rotations of \(n^{(\alpha)}\).

\begin{proof}[\normalfont\bfseries Proof of Theorem~\ref{prop:nondegeneracy-multiple-roots}]
Let
\(U^{a,n}\in\mathcal M_{\mathrm{multi}}\) be given by
\[
U_i^{a,n}(x)=n_i^{(\alpha)}V_{\alpha}^a(x),
\ \
i\in I_\alpha,\quad \alpha=1,\cdots,K,
\]
where \(V^a=(V_1^a,\cdots,V_K^a)\) is the reduced solution with distinct
spectral parameters and
\(n^{(\alpha)}=(n_i^{(\alpha)})_{i\in I_\alpha}\in
\mathbb S^{k_\alpha-1}\). Since \(|n^{(\alpha)}|=1\), we have
\[
S^{a}(x):=\sum_{\ell=1}^N (U_\ell^{a,n})^2
=
\sum_{\beta=1}^K (V_\beta^a)^2 .
\]
The linearized operator at \(U^{a,n}\) is
\[
(\mathcal L_{U^{a,n}}\phi)_i
=
-\phi_i''
-\nu_i\phi_i
-2S^{a}\phi_i
-4U_i^{a,n}\sum_{\ell=1}^N U_\ell^{a,n}\phi_\ell,
\ \  i=1,\cdots,N.
\]
We also recall the tangent directions of the manifold
\(\mathcal M_{\mathrm{multi}}\) at \(U^{a,n}\). The directions generated by
the reduced parameters are
\[
\Phi^{(\gamma)}=(\Phi_1^{(\gamma)},\cdots,\Phi_N^{(\gamma)}),\ \ \hbox{where}\,\ \Phi_i^{(\gamma)}
=
n_i^{(\alpha)}\partial_{a_\gamma}V_\alpha^a,
\ \
i\in I_\alpha,\quad \gamma=1,\cdots,K.
\]
The rotational directions inside the block \(I_\alpha\) are
\[
\Psi^{(\alpha,\xi)}=(\Psi_1^{(\alpha,\xi)},\cdots,\Psi_N^{(\alpha,\xi)}),\ \ \hbox{where}\,\ \Psi_i^{(\alpha,\xi)}
=
\begin{cases}
	\xi_i V_\alpha^a, & i\in I_\alpha,\\
	0, & i\notin I_\alpha,
\end{cases}
\ \
\xi\in T_{n^{(\alpha)}}\mathbb S^{k_\alpha-1},
\]
where
\[
T_{n^{(\alpha)}}\mathbb S^{k_\alpha-1}
=
\left\{\xi\in\mathbb R^{k_\alpha}:\ \xi\cdot n^{(\alpha)}=0\right\}.
\]
Thus, \(T_{U^{a,n}}\mathcal M_{\mathrm{multi}}\) is the span of all
\(\Phi^{(\gamma)}\) and \(\Psi^{(\alpha,\xi)}\).

We now prove that
\[
T_{U^{a,n}}\mathcal M_{\mathrm{multi}}
\subset
\ker_{H^1(\mathbb R)^N}\mathcal L_{U^{a,n}} .
\]
Indeed, the directions \(\Phi^{(\gamma)}\) are obtained by differentiating
the family \(U^{a,n}\) with respect to the reduced parameters \(a_\gamma\).
Since \(U^{a,n}\) solves the full \(N\)-component system for all admissible
\(a\), we have
\[
\mathcal L_{U^{a,n}}\Phi^{(\gamma)}=0,
\ \  \gamma=1,\cdots,K.
\]
Similarly, fix $\alpha$ and let
$\xi\in\mathbb R^{k_\alpha}$ satisfy
$\xi\cdot n^{(\alpha)}=0$.
Then there exists a smooth curve
$n_\varepsilon^{(\alpha)}\in\mathbb S^{k_\alpha-1}$
such that
\[
n_0^{(\alpha)}=n^{(\alpha)},
\ \
\frac{d}{d\varepsilon}n_\varepsilon^{(\alpha)}
\Big|_{\varepsilon=0}
=\xi.
\]
Define the perturbed solution by
\[
U_i^\varepsilon
=
\begin{cases}
	n_{\varepsilon,i}^{(\alpha)} V_\alpha^a,
	& i\in I_\alpha,\\
	U_i^{a,n},
	& i\notin I_\alpha.
\end{cases}
\]
Since the system is invariant under orthogonal
rotations inside each block $I_\alpha$,
$U^\varepsilon$ remains a solution for all
$\varepsilon$.
Differentiating the equation with respect to
$\varepsilon$ at $\varepsilon=0$, it
yields that
$
\mathcal L_{U^{a,n}} \Psi^{(\alpha,\xi)}=0.
$
This gives that
\[
T_{U^{a,n}}\mathcal M_{\mathrm{multi}}
\subset
\ker_{H^1(\mathbb R)^N}\mathcal L_{U^{a,n}} .
\]

It remains to prove the reverse inclusion. Let
\[
\phi=(\phi_1,\cdots,\phi_N)
\in
\ker_{H^1(\mathbb R)^N}\mathcal L_{U^{a,n}} .
\]
For each block \(I_\alpha\), decompose \(\phi\) into its component parallel
to \(n^{(\alpha)}\) and its orthogonal component. Define
\[
p_\alpha
:=
\sum_{i\in I_\alpha}n_i^{(\alpha)}\phi_i,\quad r_i
:=
\phi_i-n_i^{(\alpha)}p_\alpha,
\ \  i\in I_\alpha,\]
then
$
\phi_i=n_i^{(\alpha)}p_\alpha+r_i
$ for $i\in I_\alpha$,
and
$
\sum_{i\in I_\alpha}n_i^{(\alpha)}r_i=0.
$
Multiplying the \(i\)-th equation by \(n_i^{(\alpha)}\) and summing over
\(i\in I_\alpha\), we obtain that
\[
-\left(\sum_{i\in I_\alpha}n_i^{(\alpha)}\phi_i\right)''
-\mu_\alpha \sum_{i\in I_\alpha}n_i^{(\alpha)}\phi_i
-2S^a\sum_{i\in I_\alpha}n_i^{(\alpha)}\phi_i
-4\left(\sum_{\ell=1}^N U_\ell^{a,n}\phi_\ell\right)
\sum_{i\in I_\alpha}n_i^{(\alpha)}U_i^{a,n}
=0.
\]
Since
\(
\sum_{i\in I_\alpha}n_i^{(\alpha)}\phi_i=p_\alpha
\),
\[
\sum_{i\in I_\alpha}n_i^{(\alpha)}U_i^{a,n}
=
V_\alpha^a\sum_{i\in I_\alpha}(n_i^{(\alpha)})^2
=
V_\alpha^a,
\]
and
\[
\sum_{\ell=1}^N U_\ell^{a,n}\phi_\ell
=
\sum_{\beta=1}^K V_\beta^a p_\beta,
\]
we get that
\[
-p_\alpha''
-\mu_\alpha p_\alpha
-2S^{a}p_\alpha
-4V_\alpha^a\sum_{\beta=1}^K V_\beta^a p_\beta
=0.
\]
Hence, \(p=(p_1,\cdots,p_K)\) belongs to
\(\ker_{H^1(\mathbb R)^K}\mathcal L_{V^a}\).
By the nondegeneracy of the reduced solution in Theorem \ref{prop:nondegeneracy}, we have
\[
\ker_{H^1(\mathbb R)^K}\mathcal L_{V^a}
=
\operatorname{span}
\left\{
\partial_{a_1}V^a,\cdots,\partial_{a_K}V^a
\right\},
\]
and hence there exist constants \(c_1,\cdots,c_K\) such that
\(
p=\sum_{\gamma=1}^K c_\gamma \partial_{a_\gamma}V^a.
\)

We now identify the orthogonal part. Recall that \(r_i=\phi_i-n_i^{(\alpha)}p_\alpha\). For \(i\in I_\alpha\), subtract
\(n_i^{(\alpha)}\) times the equation of \(p_\alpha\) from the \(i\)-th
equation of \(\phi_i\), and use
\[
U_i^{a,n}=n_i^{(\alpha)}V_\alpha^a,
\ \
\sum_{\ell=1}^N U_\ell^{a,n}\phi_\ell
=
\sum_{\beta=1}^K V_\beta^a p_\beta.
\]
It yields that
\[
-r_i''
-\mu_\alpha r_i
-2S^{a} r_i
=0,
\ \  i\in I_\alpha .
\]
Therefore, both \(r_i\) and \(V_\alpha^a\) solve the same linear equation
\[
-y''
-\mu_\alpha y
-2S^a y=0,
\]
their Wronskian
\[
W_i^{(\alpha)}
:=
r_i(V_\alpha^a)'-r_i'V_\alpha^a
\]
is constant, due to the fact that $(W_i^{(\alpha)})'=0$. Since \(r_i,V_\alpha^a\in H^1(\mathbb R)\), we have
$
r_i(V^a_\alpha)',\ r_i'V_\alpha^a\in L^1(\mathbb R).
$
Hence \(W_i^{(\alpha)}\in L^1(\mathbb R)\), which gives that
\(
W_i^{(\alpha)}\equiv0.
\)
Since \(V^a_\alpha\not\equiv0\), choose \(x_0\in\mathbb R\) such that
$
V^a_\alpha(x_0)\neq0.
$
Set
\[
d_i^{(\alpha)}:=\frac{r_i(x_0)}{V^a_\alpha(x_0)},
\]
so that
$
r_i(x_0)=d_i^{(\alpha)}V^a_\alpha(x_0).
$
Moreover, since $W_i^{(\alpha)}\equiv0$ that  \(W_i^{(\alpha)}(x_0)=0\), we have
$
r_i'(x_0)=d_i^{(\alpha)}(V^a_\alpha)'(x_0).
$
Since \(r_i\) and \(d_i^{(\alpha)}V^a_\alpha\) solve the same second-order
linear equation,  the uniqueness gives that
\[
r_i=d_i^{(\alpha)}V^a_\alpha,
\ \  i\in I_\alpha.
\]
The orthogonality condition thus gives that
\[
\sum_{i\in I_\alpha}n_i^{(\alpha)}r_i=0,
\]
which therefore implies that
$
\sum_{i\in I_\alpha}n_i^{(\alpha)}d_i^{(\alpha)}=0,
$
i.e.,
$
d^{(\alpha)}\cdot n^{(\alpha)}=0.
$

By the parallel and orthogonal parts, we now obtain that for \(i\in I_\alpha\),
\[
\phi_i
=
n_i^{(\alpha)}
\sum_{\gamma=1}^K c_\gamma \partial_{a_\gamma}V^a_\alpha
+
d_i^{(\alpha)}V^a_\alpha,
\]
where
$
d^{(\alpha)}\cdot n^{(\alpha)}=0.
$
Thus, \(\phi\) is a linear combination of the parameter directions
\(\Phi^{(\gamma)}\) and the rotation directions
\(\Psi^{(\alpha,\xi)}\). Hence
\[
\ker_{H^1(\mathbb R)^N}\mathcal L_{U^{a,n}}
\subset
T_{U^{a,n}}\mathcal M_{\mathrm{multi}}.
\]
Together with the first inclusion, this proves that
\[
\ker_{H^1(\mathbb R)^N}\mathcal L_{U^{a,n}}
=
T_{U^{a,n}}\mathcal M_{\mathrm{multi}}.
\]

Finally, the parameter directions are independent, due to the nondegeneracy
of the reduced solution and the local parametrization \(a\mapsto V^a\).
The rotation directions in the block \(I_\alpha\) form the tangent space
\(T_{n^{(\alpha)}}\mathbb S^{k_\alpha-1}\), and hence contribute to
\(k_\alpha-1\) independent directions. These two types of directions are
independent, because they correspond respectively to the parallel and
orthogonal components in each block. We thus have
\[
\dim T_{U^{a,n}}\mathcal M_{\mathrm{multi}}
=
K+\sum_{\alpha=1}^K(k_\alpha-1)
=
N,
\]
and the proof is therefore complete.
\end{proof}

\section{\(L^2\)-Mass Formula of Solutions}
In this section, we shall prove Theorem~\ref{thm L2iden} in two steps. In Subsection 5.1 we first derive a pointwise algebraic identity of the components belonging to the same spectral block. We then use this identity to establish the exact \(L^2\)-mass formula of the classified solutions. The derivation of the algebraic identity relies on the constants of motion. After rewriting these constants in a polynomial form, we then evaluate the resulting polynomial identity at suitable spectral values. This gives the desired identity in the distinct-parameter case, while the repeated-parameter case follows by reducing each block of equal parameters to a single effective component.

\subsection{An Algebraic identity}
In this subsection we first prove the following algebraic identity, which is used in the next subsection to derive the exact \(L^2\)-mass formula.

\begin{proposition}\label{thm:algiden}
Under the same assumptions of
Theorem~\ref{thm:completeness-multiple-root}, we have for any \(\alpha=1,\cdots,K\),
\begin{equation}\label{eq:algebraic-identity}
	\sum_{j\in I_\alpha}
	\left[
	(u_j')^2+\big(\sum_{l=1}^N u_l^2+\mu_\alpha\big)u_j^2
	\right]
	+
	\sum_{\beta\neq\alpha}
	\sum_{\substack{j\in I_\alpha\\ l\in I_\beta}}
	\frac{(u_j'u_l-u_ju_l')^2}{\mu_\beta-\mu_\alpha}
	=0 .
\end{equation}
\end{proposition}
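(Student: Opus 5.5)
We will prove the identity through the classical Uhlenbeck-type constants of motion for the Neumann-type system, packaged as a single rational function of a spectral variable. Because of the block structure established in the proof of Theorem~\ref{thm:completeness-multiple-root}, we first reduce to the distinct-parameter case. Inside each block we have $u_i=n_i^{(\alpha)}V_\alpha$ with $|n^{(\alpha)}|=1$, so $\sum_{j\in I_\alpha}u_j^2=V_\alpha^2$ and $\sum_{j\in I_\alpha}(u_j')^2=(V_\alpha')^2$. Also
\[
\sum_{j\in I_\alpha,\,l\in I_\beta}(u_j'u_l-u_ju_l')^2=(V_\alpha'V_\beta-V_\alpha V_\beta')^2 .
\]
Hence \eqref{eq:algebraic-identity} is exactly the same identity for the reduced solution $V=(V_1,\cdots,V_K)$ of \eqref{eq:reduced-system}. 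We may therefore assume $K=N$ with $\mu_1<\cdots<\mu_N$, and write $S=\sum_l u_l^2$.

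For $\lambda\notin\{\mu_1,\cdots,\mu_N\}$ we define
\[
F(\lambda):=\sum_{j=1}^N\frac{(u_j')^2+(S+\lambda)u_j^2}{\lambda-\mu_j}\;-\;\frac12\sum_{j\ne l}\frac{(u_j'u_l-u_ju_l')^2}{(\lambda-\mu_j)(\lambda-\mu_l)} .
\]
The additive normalisation, for instance a constant or a term like $\sum_j u_j^2$, has to be fixed so that $F$ is conserved. The first step is to verify directly that $F'(\lambda)\equiv0$ in $x$ for every $\lambda$. We differentiate using $u_j''=-(2S+\mu_j)u_j$ and $(u_j'u_l-u_ju_l')'=(\mu_l-\mu_j)u_ju_l$. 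Then $(\mu_l-\mu_j)/((\lambda-\mu_j)(\lambda-\mu_l))$ splits into partial fractions, and the cross terms cancel against the derivative of the diagonal part. This is the standard Uhlenbeck computation, and it is uniform in $N$ because each term only involves pairs. The candidate form of $F$ may need a small correction (for example replacing $S+\lambda$ by $S+\lambda-\mu_j$ plus constants); getting this bookkeeping exactly right is the main computational obstacle. I would fix the normalisation by checking the case $N=1$, where $(u')^2+Su^2+\mu u=0$ is the energy identity of the scalar soliton $u''+2u^3+\mu u=0$ in $H^1$.

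The second step uses decay. Since $u_j,u_j'\to0$ as $x\to\pm\infty$ (they lie in $H^1$ and are $C^2$ solutions of the system, with exponential decay by Lemma~\ref{lem:left-asymptotic} and its right-hand analogue), every numerator tends to $0$. So $F(\lambda)\equiv0$ for all $x$ and all admissible $\lambda$. Multiplying by $\prod_j(\lambda-\mu_j)$ gives a polynomial identity in $\lambda$ that vanishes identically; this is the spectral polynomial identity.

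The third step extracts \eqref{eq:algebraic-identity}. We multiply $F(\lambda)=0$ by $(\lambda-\mu_\alpha)$ and let $\lambda\to\mu_\alpha$, which amounts to taking the residue of $F$ at $\mu_\alpha$. The diagonal term gives $(u_\alpha')^2+(S+\mu_\alpha)u_\alpha^2$. The cross terms with $j=\alpha$ or $l=\alpha$ (counted twice, which cancels the $\tfrac12$) give
\[
-\sum_{\beta\ne\alpha}\frac{(u_\alpha'u_\beta-u_\alpha u_\beta')^2}{\mu_\alpha-\mu_\beta}=\sum_{\beta\ne\alpha}\frac{(u_\alpha'u_\beta-u_\alpha u_\beta')^2}{\mu_\beta-\mu_\alpha}.
\]
This is precisely \eqref{eq:algebraic-identity} in the distinct case. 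Undoing the block reduction then gives the general statement.
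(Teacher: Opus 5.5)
Your architecture matches the paper's. You reduce to the distinct-parameter system for $V$ through $u_i=n_i^{(\alpha)}V_\alpha$, show that a spectral generating function is independent of $x$, kill it by decay at $\pm\infty$, and read off the identity at the pole $\lambda=\mu_\alpha$. The paper evaluates the cleared polynomial at $\lambda=-\mu_j$ in its sign convention, which is equivalent to your residue.

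The genuine gap is Step~1: the $F$ you wrote down is \emph{not} conserved. This conservation law is the heart of the argument (it is Lemma~\ref{lem:polynomial-identity-function} in the paper), so it cannot be left as bookkeeping. Set $P_j:=(u_j')^2+(S+\mu_j)u_j^2$, $W_{jl}:=u_j'u_l-u_ju_l'$, and
\[
G(\lambda):=\sum_{j=1}^N\frac{P_j}{\lambda-\mu_j}-\sum_{j<l}\frac{W_{jl}^2}{(\lambda-\mu_j)(\lambda-\mu_l)} ,
\]
which is $-\mathcal C(-\lambda,x)$ in the paper's notation. Since $\frac{(S+\lambda)u_j^2-(S+\mu_j)u_j^2}{\lambda-\mu_j}=u_j^2$, you have $F=G+S$. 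Your own $N=1$ test shows this: there $F=u^2$, not a constant. Because $G$ is conserved, $\partial_xF=S'\not\equiv0$, and the conclusion of Step~2 is false as stated: for an $H^1$ solution one gets $F\equiv S$, not $F\equiv0$. The correct repair is to replace $\lambda$ by $\mu_j$ in the diagonal numerators, not by $\lambda-\mu_j$ as you suggest. Equivalently, subtract exactly the term $\sum_j u_j^2$ you anticipated.

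For $G$ the verification is short and uniform in $N$. From $u_j''=-(2S+\mu_j)u_j$ you get $P_j'=S'u_j^2-2Su_ju_j'=-2\sum_m u_ju_mW_{jm}$ and $W_{jl}'=(\mu_l-\mu_j)u_ju_l$. For each pair $j<l$, the diagonal part contributes $-2u_ju_lW_{jl}\bigl(\frac1{\lambda-\mu_j}-\frac1{\lambda-\mu_l}\bigr)=\frac{2(\mu_l-\mu_j)u_ju_lW_{jl}}{(\lambda-\mu_j)(\lambda-\mu_l)}$, which cancels exactly the $x$-derivative of the $W_{jl}^2$ term. Then $G\equiv0$, either because $u_j,u_j'\to0$ at $\pm\infty$ or, as in the paper, because $G(\lambda,\cdot)\in L^1(\R)$.

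The discrepancy $S$ has no pole in $\lambda$. So your Step~3 residue computation applies verbatim to $G$ and yields exactly \eqref{eq:algebraic-identity} for $V$, and hence for $u$. With this repair your proof is the paper's proof.
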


We remark that the constants of motion for \eqref{FM} were established in
\cite{GLW2026}. For convenience, we record
them in their generating polynomial form. More precisely, when the left-hand
side of \eqref{eq:polynomial-identity-function} below is expanded as a
polynomial in \(\lambda\), it has degree at most \(N-1\), and for each
\(k=1,\cdots,N\), the coefficient of \(\lambda^{N-k}\) is exactly the
\(k\)-th constant of motion in \cite[Lemma~5.1]{GLW2026}. In order to prove Proposition \ref{thm:algiden},
we first prove the following constants of motion in a simplified approach.

\begin{lemma}\label{lem:polynomial-identity-function}
For $\mu_1\leq \mu_2\leq\cdots\leq \mu_N<0$, let \(u=(u_1,\cdots,u_N)\in H^1(\mathbb R)^N\) be a solution of
\eqref{FM}, namely
\[
-u_j''-2Su_j=\mu_j u_j,\ \
S=\sum_{i=1}^N u_i^2 ,
\]
and set
$
W_{jl}:=u_j'u_l-u_j u_l'$ for $1\leq j,l\leq N$. Then the following polynomial identity holds
	\begin{equation}\label{eq:polynomial-identity-function}
		\begin{split}
			\sum_{j=1}^N
			\left[
			(u_j')^2+(S+\mu_j)u_j^2
			\right]
			\prod_{i\neq j}(\lambda+\mu_i)
			+
			\sum_{1\leq j<l\leq N}
			W_{jl}^2
			\prod_{i\neq j,l}(\lambda+\mu_i)
			\equiv 0 \ \ \text{in} \ \ \R
		\end{split}
	\end{equation}
	as a polynomial identity in \(\lambda\).
\end{lemma}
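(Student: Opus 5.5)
Denote by $P(\lambda;x)$ the left-hand side of \eqref{eq:polynomial-identity-function}. The plan is to show that $P$ is independent of $x$ coefficientwise, and then that it vanishes as $x\to-\infty$. Write
\[
E_j:=(u_j')^2+(S+\mu_j)u_j^2,\qquad \pi_j(\lambda):=\prod_{i\neq j}(\lambda+\mu_i),\qquad \pi_{jl}(\lambda):=\prod_{i\neq j,l}(\lambda+\mu_i).
\]

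\textbf{Step 1: $\partial_x P=0$.} Using $u_j''=-(2S+\mu_j)u_j$, one computes
\[
E_j'=2u_j'u_j''+S'u_j^2+2(S+\mu_j)u_ju_j' = -2Su_ju_j'+S'u_j^2 .
\]
Next, $W_{jl}'=u_j''u_l-u_ju_l''=(\mu_l-\mu_j)u_ju_l$, so $(W_{jl}^2)'=2(\mu_l-\mu_j)u_ju_lW_{jl}$.

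The key trick is to write
\[
\mu_l-\mu_j=(\lambda+\mu_l)-(\lambda+\mu_j),
\]
so that
\[
(\mu_l-\mu_j)\pi_{jl}=\pi_j-\pi_l .
\]
Hence
\[
\partial_x P=\sum_j\pi_j(\lambda)\Big[E_j'+2\sum_{l\neq j}u_ju_lW_{jl}\Big],
\]
where the ordered-pair sum arises from splitting $\pi_j-\pi_l$ and using $W_{lj}=-W_{jl}$.

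It therefore suffices to check, for each $j$,
\[
-2Su_ju_j'+S'u_j^2+2\sum_{l\neq j}u_ju_l(u_j'u_l-u_ju_l')=0 .
\]
The $l=j$ term of the sum vanishes, so we may sum over all $l$. This gives
\[
2\sum_l u_ju_l(u_j'u_l-u_ju_l')=2u_ju_j'S-u_j^2S',
\]
since $S'=2\sum_l u_lu_l'$. This exactly cancels the first two terms. Thus every coefficient of $P$ in $\lambda$ is constant in $x$.

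\textbf{Step 2: the constants are zero.} Each $E_j$ and each $W_{jl}^2$ tends to $0$ as $x\to-\infty$, so every coefficient of $P$ tends to $0$ there. I would justify this using Lemma~\ref{lem:left-asymptotic}, which gives $u_j,u_j'=O(e^{\eta_jx})$. Alternatively, $u\in H^1$ and the equation give $u_j''\in L^2$, so $u_j,u_j'\to0$ at $\pm\infty$. A constant function with zero limit vanishes identically, which yields \eqref{eq:polynomial-identity-function}.

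Note that repeated $\mu_j$ cause no issue, since the proof never divides by $\mu_l-\mu_j$. The main (mild) obstacle is the bookkeeping in Step 1: converting the $W^2$-derivatives into the $\pi_j$ basis with the correct sign via the partial-fraction identity $(\mu_l-\mu_j)\pi_{jl}=\pi_j-\pi_l$. Once this is done, everything reduces to the pointwise cancellation displayed above.
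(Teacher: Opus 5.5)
Your proof is correct and follows essentially the paper's route. You use the same derivative formulas $E_j'=S'u_j^2-2Su_ju_j'$ and $W_{jl}'=(\mu_l-\mu_j)u_ju_l$ and the same pairwise cancellation; the only differences are cosmetic. You work directly in the polynomial via $(\mu_l-\mu_j)\pi_{jl}=\pi_j-\pi_l$, whereas the paper uses the rational function $\sum_j E_j/(\lambda+\mu_j)+\sum_{j<l}W_{jl}^2/\bigl((\lambda+\mu_j)(\lambda+\mu_l)\bigr)$ and clears denominators at the end; and you kill the constant by decay at $-\infty$ rather than by the paper's remark that a constant lying in $L^1(\mathbb R)$ must vanish, which is equally valid.
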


\begin{proof}
Set for \(j=1,\cdots,N\),
\[
P_j:=(u_j')^2+(S+\mu_j)u_j^2 ,
\]
and define for
\(\lambda\notin\{-\mu_1,\cdots,-\mu_N\}\),
\[
\mathcal C(\lambda,x)
:=
\sum_{j=1}^N
\frac{P_j}{\lambda+\mu_j}
+
\sum_{1\leq j<l\leq N}
\frac{W_{jl}^2}{(\lambda+\mu_j)(\lambda+\mu_l)} .
\]
We claim that \(\mathcal C(\lambda,x)\) is independent of \(x\).

In order to prove the above claim, it   suffices  to show that $	\partial_x \mathcal C(\lambda,x)\equiv 0$.  Note that
\[
\begin{split}
	\partial_x \mathcal C(\lambda,x)
	&=
	\sum_{j=1}^N
	\frac{P_j'}{\lambda+\mu_j}
	+
	\sum_{1\leq j<l\leq N}
	\frac{2W_{jl}W_{jl}'}{(\lambda+\mu_j)(\lambda+\mu_l)} .
\end{split}
\]
We next compute $P_j'$ as follows. Since
\(
u_j''=-(2S+\mu_j)u_j,
\)
we obtain that
\[
\begin{split}
	P_j'=
	2u_j'u_j''+S'u_j^2+2(S+\mu_j)u_ju_j'=
	S'u_j^2-2Su_ju_j' .
\end{split}
\]
Because \(S'=2\sum_{m=1}^N u_m u_m'\), we have
\[
P_j'
=
-2\sum_{m=1}^N u_j u_m W_{jm},
\]
On the other hand, we obtain that
\[
W_{jl}'
=
u_j''u_l-u_ju_l''
=
(\mu_l-\mu_j)u_ju_l .
\]
For each pair \(j<l\), the coefficient of $u_ju_lW_{jl}$ coming from $\sum_{j=1}^N
\frac{P_j'}{\lambda+\mu_j}
$ satisfies
\[
-\,\frac{2u_j u_l W_{jl}}{\lambda+\mu_j}
+
\frac{2u_j u_l W_{jl}}{\lambda+\mu_l}
=
\frac{2(\mu_j-\mu_l)u_j u_l W_{jl}}
{(\lambda+\mu_j)(\lambda+\mu_l)} .
\]
The contribution coming from $
\sum_{1\leq j<l\leq N}
\frac{2W_{jl}W_{jl}'}{(\lambda+\mu_j)(\lambda+\mu_l)}$ also satisfies
\[
\frac{2(\mu_l-\mu_j)u_j u_l W_{jl}}
{(\lambda+\mu_j)(\lambda+\mu_l)} .
\]
Hence,
$\partial_x \mathcal C(\lambda,x)=0$, and  it proves the claim that  $
\mathcal C(\lambda,x)$ is a constant.

On the other hand, since $u\in H^1(\R)^N$, we have $\mathcal C(\lambda,x)\in L^1(\R)$. Since $\mathcal C(\lambda,x)$ is a constant in \(x\), it must have
\[
\mathcal C(\lambda,x)\equiv0 .
\]
Multiplying this identity by
\(
\prod_{i=1}^N(\lambda+\mu_i)
\), it
gives  that \eqref{eq:polynomial-identity-function} holds for every
\(\lambda\notin\{-\mu_1,\cdots,-\mu_N\}\). Since the left-hand side is a
polynomial in \(\lambda\), the identity \eqref{eq:polynomial-identity-function} holds for all \(\lambda\).
\end{proof}

\begin{lemma}\label{lem:algebraic-identity-function}
Under the assumptions of Lemma~\ref{lem:polynomial-identity-function},  assume
\[
\mu_1<\mu_2<\cdots<\mu_N<0.
\]
Then we have for each \(j=1,2,\cdots,N\),
\begin{equation}\label{eq:algebraic-identity-function}
	(u_j')^2+(S+\mu_j)u_j^2
	+
	\sum_{l\neq j}
	\frac{(u_j'u_l-u_ju_l')^2}{\mu_l-\mu_j}
	=0
	\ \ \text{in }\ \mathbb R.
\end{equation}
\end{lemma}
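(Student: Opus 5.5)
The plan is to evaluate the polynomial identity \eqref{eq:polynomial-identity-function} of Lemma~\ref{lem:polynomial-identity-function} at the root \(\lambda=-\mu_j\) for a fixed index \(j\). Since the \(\mu_i\) are pairwise distinct, this choice kills every product containing the factor \(\lambda+\mu_j\). Only the terms whose product omits the index \(j\) survive, and we then divide by the nonzero common factor that remains.

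First consider the single-index sum \(\sum_{m} P_m\prod_{i\ne m}(\lambda+\mu_i)\). At \(\lambda=-\mu_j\), every term with \(m\ne j\) contains the factor \((\lambda+\mu_j)\) and therefore vanishes. The surviving term is
\[
P_j\prod_{i\ne j}(\mu_i-\mu_j).
\]

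Next consider the pair sum \(\sum_{m<l}W_{ml}^2\prod_{i\ne m,l}(\lambda+\mu_i)\). A term survives only if \(j\in\{m,l\}\). Since \(W_{ml}^2\) is symmetric, the survivors can be written as
\[
\sum_{l\ne j}W_{jl}^2\prod_{i\ne j,l}(\mu_i-\mu_j).
\]

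Now set \(D_j:=\prod_{i\ne j}(\mu_i-\mu_j)\), which is nonzero by distinctness. For each \(l\ne j\) we have \(\prod_{i\ne j,l}(\mu_i-\mu_j)=D_j/(\mu_l-\mu_j)\). Dividing the evaluated identity by \(D_j\) therefore gives
\[
P_j+\sum_{l\ne j}\frac{W_{jl}^2}{\mu_l-\mu_j}=0,
\]
which is exactly \eqref{eq:algebraic-identity-function}, because \(W_{jl}^2=(u_j'u_l-u_ju_l')^2\).

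There is no real obstacle here; the only care needed is bookkeeping of which products vanish. In particular, the unordered pairs \(\{j,l\}\) with \(l<j\) must be included via the symmetry \(W_{lj}^2=W_{jl}^2\). The identity holds pointwise in \(x\), since Lemma~\ref{lem:polynomial-identity-function} holds as a polynomial identity in \(\lambda\) for every \(x\in\mathbb R\).
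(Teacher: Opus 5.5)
Your proposal is correct and follows the paper's proof essentially line for line. The paper also evaluates the polynomial identity of Lemma~\ref{lem:polynomial-identity-function} at \(\lambda=-\mu_j\), keeps only the \(r=j\) term and the pair terms involving \(j\), and divides by \(\prod_{i\neq j}(\mu_i-\mu_j)\). Your explicit remark on the symmetry \(W_{lj}^2=W_{jl}^2\) makes precise a step the paper leaves implicit.
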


\begin{proof}
By Lemma \ref{lem:polynomial-identity-function}, we have
\[
\begin{split}
	\sum_{r=1}^N
	\left[
	(u_r')^2+(S+\mu_r)u_r^2
	\right]
	\prod_{i\neq r}(\lambda+\mu_i)
	+
	\sum_{1\leq r<s\leq N}
	W_{rs}^2
	\prod_{i\neq r,s}(\lambda+\mu_i)
	\equiv0 .
\end{split}
\]
Fix \(j\in\{1,2,\cdots,N\}\). Since the numbers
\(\mu_1,\cdots,\mu_N\) are pairwise distinct, we may evaluate this
polynomial identity at \(\lambda=-\mu_j\). Then all terms in the
first sum vanish except the term with \(r=j\), and all terms in the
second sum vanish except those involving the index \(j\). This gives that
\[
\left[
(u_j')^2+(S+\mu_j)u_j^2
\right]
\prod_{i\neq j}(\mu_i-\mu_j)
+
\sum_{l\neq j}
(u_j'u_l-u_ju_l')^2
\prod_{i\neq j,l}(\mu_i-\mu_j)
=0.
\]
It then follows that
\[
(u_j')^2+(S+\mu_j)u_j^2
+
\sum_{l\neq j}
\frac{(u_j'u_l-u_ju_l')^2}{\mu_l-\mu_j}
=0.
\]
This proves \eqref{eq:algebraic-identity-function}.
\end{proof}

Applying the above two lemmas, we are now ready to prove Proposition \ref{thm:algiden}.

\begin{proof}[\normalfont \bfseries Proof of Proposition \ref{thm:algiden}]
By Theorem~\ref{thm:completeness-multiple-root}, we have
\(u\in\mathcal M_{\mathrm{multi}}\). Hence there exist
\[
a=(a_1,\cdots,a_K)\in(\mathbb R\setminus\{0\})^K\,\ \hbox{and}\,\ n^{(\alpha)}
=
(n_j^{(\alpha)})_{j\in I_\alpha}
\in\mathbb S^{k_\alpha-1},
\ \  \alpha=1,\cdots,K,
\]
such that
$
u_j(x)=n_j^{(\alpha)}V_\alpha^a(x)
$ holds for $j\in I_\alpha$.
For simplicity, write \(V_\alpha=V_\alpha^a\). Since
$
\sum_{j\in I_\alpha}\bigl(n_j^{(\alpha)}\bigr)^2=1,
$
we have
\[
\sum_{j\in I_\alpha}u_j^2=V_\alpha^2,
\ \
\sum_{j\in I_\alpha}(u_j')^2=(V_\alpha')^2.
\]
Consequently,
\[
S=\sum_{j=1}^N u_j^2
=
\sum_{\gamma=1}^K V_\gamma^2.
\]
Moreover, we have for \(\alpha\neq\beta\), \(j\in I_\alpha\), and
\(l\in I_\beta\),
\[
u_j'u_l-u_ju_l'
=
n_j^{(\alpha)}n_l^{(\beta)}
\bigl(V_\alpha'V_\beta-V_\alpha V_\beta'\bigr).
\]
We then have
\[
\begin{aligned}
	\sum_{\substack{j\in I_\alpha\\ l\in I_\beta}}
	(u_j'u_l-u_ju_l')^2
	&  =
	\left(
	\sum_{j\in I_\alpha}
	\bigl(n_j^{(\alpha)}\bigr)^2
	\right)
	\left(
	\sum_{l\in I_\beta}
	\bigl(n_l^{(\beta)}\bigr)^2
	\right)
	\bigl(V_\alpha'V_\beta-V_\alpha V_\beta'\bigr)^2  \\
	&  =
	\bigl(V_\alpha'V_\beta-V_\alpha V_\beta'\bigr)^2 .
\end{aligned}
\]

Substituting these identities into the left-hand side of
\eqref{eq:algebraic-identity}, we obtain that
\[
\mbox{LHS  of \eqref{eq:algebraic-identity}}= (V_\alpha')^2
+
\left(
\sum_{\gamma=1}^K V_\gamma^2+\mu_\alpha
\right)V_\alpha^2
+
\sum_{\beta\neq\alpha}
\frac{
	\bigl(V_\alpha'V_\beta-V_\alpha V_\beta'\bigr)^2
}{\mu_\beta-\mu_\alpha}.
\]
The reduced profile
$
V=(V_1,\cdots,V_K)
$
solves the distinct-root system
\[
V_\alpha''
+
2\left(\sum_{\gamma=1}^K V_\gamma^2\right)V_\alpha
+
\mu_\alpha V_\alpha
=0,
\ \  \alpha=1,\cdots,K.
\]
Applying Lemma~\ref{lem:algebraic-identity-function} to this
\(K\)-component distinct-root system, we then obtain that
\[
\mbox{LHS  of \eqref{eq:algebraic-identity}}= (V_\alpha')^2
+
\left(
\sum_{\gamma=1}^K V_\gamma^2+\mu_\alpha
\right)V_\alpha^2
+
\sum_{\beta\neq\alpha}
\frac{
	\bigl(V_\alpha'V_\beta-V_\alpha V_\beta'\bigr)^2
}{\mu_\beta-\mu_\alpha}
=0,
\]
and we are done.
\end{proof}

\subsection{The \(L^2\)-mass formula}
In this subsection, we discuss the exact \(L^2\)-mass formula of Theorem \ref{thm L2iden}. We remark that the pointwise algebraic identities proved in the previous subsection allow us to express the mass of each spectral block in terms of the corresponding spectral parameter.

%Integrating these identities over \(\mathbb R\) and using the decay of the classified solutions at infinity then gives the desired formulas.

\begin{proof}[\normalfont \bfseries Proof of Theorem~\ref{thm L2iden}]
Recall that
$S=\sum_{i=1}^N u_i^2$.
For each
\(\alpha=1,\cdots,K\),   Theorem~\ref{thm:completeness-multiple-root} shows that there exist
\(n^{(\alpha)}=(n_i^{(\alpha)})_{i\in I_\alpha}\in
\mathbb S^{k_\alpha-1}\) and a nontrivial function \(v_\alpha\in
H^1(\mathbb R)\cap C^2(\mathbb R)\) such that
\[
u_i=n_i^{(\alpha)}v_\alpha,\ \  i\in I_\alpha .
\]
Hence $S=\sum_{\gamma=1}^K v_\gamma^2$
and
\[
\int_{\mathbb R}\sum_{i\in I_\alpha}u_i^2\,dx
=
\int_{\mathbb R}v_\alpha^2\,dx\neq 0.
\]
It next suffices to prove that
\[
\int_{\mathbb R}v_\alpha^2\,dx=2\eta_\alpha .
\]

The reduced functions \(v_\alpha\) satisfy
\begin{equation}\label{eq:reduced-v-equation}
	v_\alpha''+(2S+\mu_\alpha)v_\alpha=0,
	\ \  \alpha=1,\cdots,K .
\end{equation}
Fix \(\alpha\)  and  define   for \(\beta\neq\alpha\),
\[
\delta_{\alpha\beta}:=\mu_\beta-\mu_\alpha,
\ \
W_{\alpha\beta}:=v_\alpha'v_\beta-v_\alpha v_\beta' .
\]
We then derive from \eqref{eq:reduced-v-equation} that
\begin{equation}\label{eq:Wab-prime}
	W_{\alpha\beta}'
	=
	\delta_{\alpha\beta}v_\alpha v_\beta .
\end{equation}
Moreover, Proposition~\ref{thm:algiden} gives the reduced identity
\begin{equation}\label{eq:reduced-alg-identity}
	(v_\alpha')^2+(S+\mu_\alpha)v_\alpha^2
	+
	\sum_{\beta\neq\alpha}
	\frac{W_{\alpha\beta}^2}{\delta_{\alpha\beta}}
	=0 .
\end{equation}
Define on the open set \(\{v_\alpha\neq0\}\),
\[
Q_\alpha
:=
\frac{v_\alpha'}{v_\alpha}
+
\sum_{\beta\neq\alpha}
\frac{W_{\alpha\beta}v_\beta}
{\delta_{\alpha\beta}v_\alpha}.
\]
We first compute \(Q_\alpha'\) on the open set \(\{v_\alpha\neq0\}\): it follows from
\eqref{eq:reduced-v-equation} and \eqref{eq:reduced-alg-identity} that
\[
\begin{aligned}
	\left(\frac{v_\alpha'}{v_\alpha}\right)'
	&=
	\frac{v_\alpha''}{v_\alpha}
	-
	\frac{(v_\alpha')^2}{v_\alpha^2}  \\
	&=
	-(2S+\mu_\alpha)
	-
	\frac{(v_\alpha')^2}{v_\alpha^2}  \\
	&=
	-S+
	\sum_{\beta\neq\alpha}
	\frac{W_{\alpha\beta}^2}
	{\delta_{\alpha\beta}v_\alpha^2}.
\end{aligned}
\]
We also derive from \eqref{eq:Wab-prime} that
\[
\begin{aligned}
	\left(
	\frac{W_{\alpha\beta}v_\beta}
	{\delta_{\alpha\beta}v_\alpha}
	\right)'
	&=
	\frac{W_{\alpha\beta}'v_\beta+W_{\alpha\beta}v_\beta'}
	{\delta_{\alpha\beta}v_\alpha}
	-
	\frac{W_{\alpha\beta}v_\beta v_\alpha'}
	{\delta_{\alpha\beta}v_\alpha^2} \\
	&=
	v_\beta^2
	+
	\frac{W_{\alpha\beta}}
	{\delta_{\alpha\beta}v_\alpha^2}
	\left(v_\alpha v_\beta'-v_\alpha'v_\beta\right) \\
	&=
	v_\beta^2
	-
	\frac{W_{\alpha\beta}^2}
	{\delta_{\alpha\beta}v_\alpha^2}.
\end{aligned}
\]
Summing over \(\beta\neq\alpha\), it then yields that
\begin{equation}\label{eq:Q-prime}
	Q_\alpha'=-v_\alpha^2
	\ \  \text{on } \{v_\alpha\neq0\}.
\end{equation}

It remains to justify that \(Q_\alpha\) can be extended across the zeros of
\(v_\alpha\). Write
\[
Q_\alpha=\frac{P_\alpha}{v_\alpha},
\ \
P_\alpha:=
v_\alpha'
+
\sum_{\beta\neq\alpha}
\frac{W_{\alpha\beta}v_\beta}{\delta_{\alpha\beta}} ,
\]
and let \(x_0\) be a zero of \(v_\alpha\). Since \(v_\alpha\) is a nontrivial
solution of the linear equation \eqref{eq:reduced-v-equation}, we have
\(v_\alpha'(x_0)\neq0\). Hence, \(x_0\) is a simple zero of \(v_\alpha\).
On the other hand, evaluating \eqref{eq:reduced-alg-identity} at \(x_0\)
and using
\[
W_{\alpha\beta}(x_0)=v_\alpha'(x_0)v_\beta(x_0),
\]
we obtain that
\[
0
=
(v_\alpha'(x_0))^2
\Big(
1+
\sum_{\beta\neq\alpha}
\frac{v_\beta(x_0)^2}{\delta_{\alpha\beta}}
\Big).
\]
Since \(v_\alpha'(x_0)\neq0\), this implies that
\[
1+
\sum_{\beta\neq\alpha}
\frac{v_\beta(x_0)^2}{\delta_{\alpha\beta}}
=0 .
\]
Consequently,
\[
P_\alpha(x_0)
=
v_\alpha'(x_0)
\left(
1+
\sum_{\beta\neq\alpha}
\frac{v_\beta(x_0)^2}{\delta_{\alpha\beta}}
\right)
=0 .
\]
Thus, both \(P_\alpha\) and \(v_\alpha\) vanish at \(x_0\), while
\(v_\alpha\) has a simple zero at \(x_0\). So \(P_\alpha/v_\alpha\) has a
finite limit at \(x_0\). We extend \(Q_\alpha\) to \(x_0\) by this limit.
Since \(P_\alpha\) and \(v_\alpha\) are \(C^1\), this extension is continuous.

Extending \(Q_\alpha\) by the above removable limits, we obtain a continuous
function on \(\mathbb R\). Since the zeros of \(v_\alpha\) are isolated,
there are only finitely many zeros in any compact interval. Hence, integrating
\eqref{eq:Q-prime} on each nodal interval and summing them, the interior boundary
terms cancel by the continuity of \(Q_\alpha\). We thus have for any
\(R_1<R_2\),
\begin{equation}\label{eq:int-v-Q}
\int_{R_1}^{R_2}v_\alpha^2\,dx
=
Q_\alpha(R_1)-Q_\alpha(R_2).
\end{equation}

Finally, we derive from Lemma~\ref{lem:L1-asymptotic} that there exist nonzero constants
\(c_\gamma^-\) and \(c_\gamma^+\) such that  for every
\(\gamma=1,\cdots,K\),
\[
v_\gamma(x)=c_\gamma^-e^{\eta_\gamma x}
+o(e^{\eta_\gamma x}),
\ \
v_\gamma'(x)=\eta_\gamma c_\gamma^-e^{\eta_\gamma x}
+o(e^{\eta_\gamma x})
\quad \text{as }\ x\to-\infty,
\]
and
\[
v_\gamma(x)=c_\gamma^+e^{-\eta_\gamma x}
+o(e^{-\eta_\gamma x}),
\ \
v_\gamma'(x)=-\eta_\gamma c_\gamma^+e^{-\eta_\gamma x}
+o(e^{-\eta_\gamma x})
\quad \text{as }\ x\to+\infty,
\]
where $\eta_\gamma=\sqrt{|\mu_\gamma|}$ for $\gamma=1,2,\cdots,K$.
The right-hand asymptotics follow by applying
Lemma~\ref{lem:L1-asymptotic} to the reflected function
\(\widetilde v_\gamma(x):=v_\gamma(-x)\). The corresponding leading
coefficient cannot vanish, since otherwise Lemma~\ref{lem:L1-asymptotic} would
imply that \(v_\gamma\) vanishes on a right half-line, and hence
\(v_\gamma\equiv0\) by the uniqueness of ODE.
We hence have
\[
\frac{v_\alpha'}{v_\alpha}\to\eta_\alpha
\quad\text{as }\ x\to-\infty,
\ \
\frac{v_\alpha'}{v_\alpha}\to-\eta_\alpha
\quad\text{as }\ x\to+\infty.
\]
Moreover, for every \(\beta\neq\alpha\),
\[
\frac{W_{\alpha\beta}v_\beta}{v_\alpha}
=O(e^{2\eta_\beta x})
\quad\text{as }\ x\to-\infty,
\]
and
\[
\frac{W_{\alpha\beta}v_\beta}{v_\alpha}
=O(e^{-2\eta_\beta x})
\quad\text{as }\ x\to+\infty.
\]
These yield that
\[
Q_\alpha(x)\to\eta_\alpha
\quad\text{as }\ x\to-\infty,
\ \
Q_\alpha(x)\to-\eta_\alpha
\quad\text{as } \ x\to+\infty.
\]
Setting \(R_1\to-\infty\) and \(R_2\to+\infty\) in
\eqref{eq:int-v-Q}, we conclude that
\[
\int_{\mathbb R}v_\alpha^2\,dx
=
\eta_\alpha-(-\eta_\alpha)
=
2\eta_\alpha=2\sqrt{|\mu_\alpha|}.
\]
Consequently, we have for every $\alpha=1,2,\cdots,K$,
\[
\int_{\mathbb R}\sum_{i\in I_\alpha}u_i^2\,dx
=
\int_{\mathbb R}v_\alpha^2\,dx
=
2\eta_\alpha=2\sqrt{|\mu_\alpha|}.
\]
This therefore completes the proof of Theorem~\ref{thm L2iden}.
\end{proof}

\paragraph{Declaration of AI-assisted tools.}
During the preparation of this manuscript, the authors used OpenAI models as auxiliary tools for exploratory discussions, preliminary proof drafting and language editing. The
mathematical validation, the final proof choices, and the final text are the responsibility
of the human authors.


\begin{thebibliography}{99}
\bibitem{A}
A. Ambrosetti and E. Colorado,
\emph{Standing waves of some coupled nonlinear Schr\"{o}dinger equations},
J. Lond. Math. Soc. \textbf{75} (2007), 67--82.

\bibitem{BS}
T. Bartsch and N. Soave,
\emph{A natural constraint approach to normalized solutions of nonlinear Schr\"{o}dinger equations and systems},
J. Funct. Anal. \textbf{272} (2017), 4998--5037.

\bibitem{BZZ}
T. Bartsch, X. Zhong and W. Zou,
\emph{Normalized solutions for a coupled Schr\"{o}dinger system},
Math. Ann. \textbf{380} (2021), 1713--1740.

\bibitem{CS}
M. Clapp and A. Szulkin,
\emph{Normalized solutions to a non-variational Schr\"{o}dinger system},
Topol. Methods Nonlinear Anal. \textbf{61} (2023), 445--464.

\bibitem{CoddingtonLevinson1955}
E. A. Coddington and N. Levinson,
{Theory of Ordinary Differential Equations},
McGraw--Hill, New York, 1955.

\bibitem{Eastham1989}
M. S. P. Eastham,
\emph{The asymptotic solution of linear differential systems:
	applications of the Levinson theorem},
Clarendon Press, Oxford, 1989.

\bibitem{Lewin3}
R. Frank, D. Gontier and M. Lewin,
\emph{Optimizers for the finite-rank Lieb--Thirring inequality},
Amer. J. Math. \textbf{147} (2025), 503--560.

\bibitem{Lewin}
R. Frank, D. Gontier and M. Lewin,
\emph{The nonlinear Schr\"{o}dinger equation for orthonormal functions II:
	application to Lieb--Thirring inequalities},
Commun. Math. Phys. \textbf{384} (2021), 1783--1828.

\bibitem{FI}
B. D. Fried and Y. H. Ichikawa,
\emph{On the nonlinear Schr\"{o}dinger equation for Langmuir waves},
J. Phys. Soc. Japan. \textbf{34} (1973), 1073--1082.

\bibitem{GLN}
D. Gontier, M. Lewin and F. Q. Nazar,
\emph{The nonlinear Schr\"{o}dinger equation for orthonormal functions: existence of ground states},
Arch. Ration. Mech. Anal. \textbf{240} (2021), 1203--1254.


\bibitem{GLW2026}
Y. Guo, Y. Luo and J. Wei,
\emph{Classification and nondegeneracy of cubic nonlinear Schr\"{o}dinger systems in $\mathbb R$},
Anal. PDE (2026),  to appear.

\bibitem{H}
R. Hirota,
\emph{Direct method in soliton theory},
 Topics in Current Physics Vol. 17,
Springer, Berlin, 1980, pp. 157--176.

\bibitem{HornJohnson2013}
R. A. Horn and C. R. Johnson,
\emph{Matrix Analysis}, 2nd ed.,
Cambridge University Press, Cambridge, 2013.

\bibitem{HLT}
D. Hundertmark, E. H. Lieb and L. E. Thomas,
\emph{A sharp bound for an eigenvalue moment of the one-dimensional Schr\"{o}dinger operator},
Adv. Theor. Math. Phys. \textbf{2} (1998), 719--731.

\bibitem{Levinson1948}
N. Levinson,
\emph{The asymptotic nature of solutions of linear systems of differential equations},
Duke Math. J. \textbf{15} (1948), 111--126.

\bibitem{LS}
E. H. Lieb, R. Seiringer, J. P. Solovej and J. Yngvason,
\emph{The mathematics of the Bose gas and its condensation},
Oberwolfach Seminars, Birkh\"auser, Basel, 2005.

\bibitem{LT}
E. H. Lieb and W. E. Thirring,
\emph{Inequalities for the moments of the eigenvalues of the Schr\"{o}dinger Hamiltonian and their relation to Sobolev inequalities},
Studies in Mathematical Physics, Princeton University Press,
Princeton, 1976, pp. 269--303.

\bibitem{LW}
T. C. Lin and J. C. Wei,
\emph{Ground state of $N$ coupled nonlinear Schr\"{o}dinger equations in $\mathbb R^n$, $n\leq 3$},
Commun. Math. Phys. \textbf{255} (2005), 629--653.

\bibitem{LWY}
C. S. Lin, J. C. Wei and D. Ye,
\emph{Classification and nondegeneracy of $SU(n+1)$ Toda system with singular sources},
Invent. Math. \textbf{190} (2012), 169--207.

\bibitem{M}
B. Malomed,
\emph{Nonlinear Schr\"{o}dinger equations},
Encyclopedia of Nonlinear Science,
Routledge, 2005, pp. 639--642.

\bibitem{RL}
R. Radhakrishnan and M. Lakshmanan,
\emph{Bright and dark soliton solutions to coupled nonlinear Schr\"{o}dinger equations},
J. Phys. A \textbf{28} (1995), 2683--2692.

\bibitem{RSL}
R. Ramakrishnan, S. Stalin and M. Lakshmanan,
\emph{Multihumped nondegenerate fundamental bright solitons in $N$-coupled nonlinear Schr\"{o}dinger system},
J. Phys. A: Math. Theor. \textbf{54} (2021), 14LT01.

\bibitem{Rama2}
A. O. Smirnov, E. A. Frolov and V. S. Gerdjikov,
\emph{Spectral curves for the multi-phase solutions of Manakov system},
IOP Conf. Ser.: Mater. Sci. Eng. \textbf{862} (2020), 052041.

\bibitem{W}
T. Weidl,
\emph{On the Lieb--Thirring constants $L_{\gamma,1}$ for $\gamma \geq 1/2$},
Comm. Math. Phys. \textbf{178} (1996), 135--146.

\bibitem{WZZ}
J. C. Wei, X. X. Zhong and W. M. Zou,
\emph{On Sirakov's open problem and related topics},
Ann. Sc. Norm. Super. Pisa Cl. Sci. \textbf{23} (2022), no. 2, 959--992.

\bibitem{WW}
J. C. Wei and Y. Z. Wu,
\emph{Ground states of nonlinear Schr\"{o}dinger systems with mixed couplings},
J. Math. Pures Appl. \textbf{141} (2020), 50--88.

\bibitem{ZS}
V. E. Zakharov and A. B. Shabat,
\emph{Exact theory of two-dimensional self-focusing and one-dimensional self-modulation of waves in nonlinear media},
Sov. Phys. JETP \textbf{34} (1972), 62--69.
	




\end{thebibliography}
\end{document}